\newcommand{\Leb}{\mathrm{Leb}}
\newcommand\coma[1]{{\color{red}#1}}
\newcommand\dela[1]{{\color{red}}}
\theoremstyle{plain}
\newtheorem{assu}{Assumption}[section]
\newcommand\E{{\mathbb E}}
\newcommand\N{{\mathbb N}}
\newcommand\R{{\mathbb R}}
 \newtheorem{thm}{Theorem}[section]
 \newtheorem{cor}[thm]{Corollary}
 \newtheorem{lem}[thm]{Lemma}
 \newtheorem{prop}[thm]{Proposition}
 \theoremstyle{definition}
 \newtheorem{defn}[thm]{Definition}
 \theoremstyle{remark}
 \newtheorem{rem}[thm]{Remark}
 \numberwithin{equation}{section}
\def\d{\text{\rm{d}}}
\newcommand{\n}{\|}
\newcommand{\F}{\mathscr{F}}
\renewcommand{\P}{\mathbb{P}}
\newcommand{\e}{\varepsilon}
\begin{document}
	\title[Maximal inequalities]
	 {{Maximal inequalities and exponential estimates for stochastic convolutions driven by L\'{e}vy-type processes in Banach spaces with application to stochastic quasi-geostrophic equations$^\dagger$}
	 \footnote{$\dagger$ This work is supported  by NSFC (11501509,11571147,11822106,11831014), NSF of Jiangsu Province
(BK20160004), and the Qing Lan Project and PAPD of Jiangsu Higher Education Institutions.}}
\author[J. Zhu,  Z. Brze\'{z}niak and W. Liu]{}

\begin{center}
\begin{minipage}{145mm}

	\date{today}

\end{minipage}
\end{center}

\maketitle	

\centerline{\scshape Jiahui Zhu$^a$, Zdzis{\l}aw Brze\'{z}niak$^b$, Wei Liu$^{c,}$\footnote{Corresponding author: weiliu@jsnu.edu.cn}}
\medskip
 {\footnotesize
\centerline{ $a.$   School of Science, Zhejiang University of Technology, Hangzhou 310019, China }
 \centerline{ $b.$ Department of Mathematics, University of York,  York YO10 5DD, UK}
\centerline{  $c.$ School of Mathematics and Statistics, Jiangsu Normal University, Xuzhou 221116, China}
 }

\begin{abstract}
We present remarkably simple proofs of Burkholder-Davis-Gundy inequalities for stochastic integrals and maximal inequalities for stochastic convolutions in Banach spaces driven by L\'{e}vy-type processes. Exponential estimates for stochastic convolutions are obtained and two versions of It\^{o}'s formula in Banach spaces are also derived. 	
 Based on the obtained maximal inequality, the existence and uniqueness of mild solutions of stochastic quasi-geostrophic equation with L\'{e}vy noise is established.

\noindent \textbf{Keywords.}  Burkholder-Davis-Gundy inequality, maximal inequality, exponential estimate, stochastic convolution,  It\^o  formula, martingale type $r$ Banach space

\noindent \textbf{AMS Subject Classification.}  60H15; 60J75; 35B65; 46B09
 \end{abstract}

	%%% ----------------------------------------------------------------------
	%\tableofcontents
	\section{\bf{Introduction}}

	Over the past few decades, stochastic partial differential equations (SPDEs) have attracted considerable attention from researchers in a wide variety of fields, including biology, physics, engineering and finance etc. (cf. \cite{Da+Zab-1996, LR15} and the references therein).
In the study of SPDEs, the Burkholder-Davis-Gundy (BDG) inequality and maximal inequality play vital roles
	  in proving the existence, uniqueness, and regularity of solutions of SPDEs. There are quite a number of contributions on the study of BDG and maximal inequalities when the state space is a Hilbert space; see \cite{[Da+Zab]}, \cite{[HauSei]},  \cite{[Ichikawa]}, \cite{[Tub]} and \cite{[Metivier]}.  However, many interesting problems in the theory of SPDEs whose natural settings in function spaces are not Hilbert spaces, but rather Banach spaces (e.g. some Sobolev spaces).  Nevertheless, literature and research studies related to these inequalities on general Banach spaces  are very limited  and this is the motivation of our paper.
	
	The overall goal of this work is to investigate BDG inequalities and maximal regularities of stochastic convolutions driven by L\'{e}vy processes in Banach spaces.  We will derive, in Appendix \ref{app-Ito}, two general versions of It\^o's  formula for L\'{e}vy-type processes in Banach spaces, which are crucial for the proof of our inequalities. 	
	We will work in the martingale type $r$ Banach spaces with $1<r\leq 2$.
	This assumption is necessary for establishing a theory of stochastic integration in Banach spaces.
 Typical examples of such spaces are $L^p$ spaces with $p\in[r,\infty)$ and Sobolev spaces. 	

 	Now let us state  our problem more explicitly.
	Let $(E,|\cdot|_E)$ be a separable Banach space of martingale type $r$ with $1<r\leq 2$ and  $(\Omega,\mathcal{F},\mathbb{P})$ be a probability space with the filtration $(\mathcal{F}_t)_{t\geq0}$ satisfying the usual hypotheses, and let  $(Z,\mathcal{Z})$ be a measurable space. We first consider the following process
	\begin{align*}
		   u_t=\int_0^t\int_Z\xi(s,z)\,\tilde{N}(\d s,\d z), \;\; t\in[0,T],\;T>0,
	\end{align*}
	where
	$\tilde{N}$ is a compensated Poisson random measure on $(Z,\mathcal{Z})$ and $\xi$ is an $E$-valued $\mathcal{P}\otimes\mathcal{Z}$-measurable process. In this paper we will establish the following BDG and $L^p$ inequalities
		\begin{align}
		    &      \E\sup_{t\in[0,T]} |u_t|_E^p\leq C_{p,r}\,\E\Big(\int_0^T\int_Z|\xi(s,z)|_E^r\,N(\d s,\d z)\Big)^{\frac{p}{r}} \text{  for all }1\leq p<\infty;\label{intro-eq-2}\\
		     &  \E\sup_{t\in[0,T]} |u_t|_E^p\leq C_{p,r}\min\left\{\,\E\int_0^T\int_Z|\xi(s,z)|^p_E\,\nu(\d z)\,\d s,
   \E\Big(\int_0^T\int_Z|\xi(s,z)|_E^r\,\nu(\d z)\,\d s\Big)^{\frac{p}{r}}\right\}\text{  for all }1\leq p\leq r;\label{intro-eq-3}\\
				 &\E\sup_{t\in[0,T]}|u_t|_E^{p}\leq C_{p,r}\,\E\Big(\int_0^T\int_Z |\xi(s,z)|^p_E\,\nu(\d z)\,\d s\Big)\nonumber\\
		 &\hspace{3cm}+C_{p,r}\,\E \Big(\int_0^T\int_Z|\xi(s,z)|^{r}_E\,\nu(\d z)\,\d s\Big)^{\frac{p}{r}}\text{  for all }r\leq p<\infty.\label{intro-eq-main-1}
		   \end{align}
	
	Here and in what follows, $C_{p,r}$ is a generic constant depending only on $p$ and
	$r$, which may change from line to line. An inequality similar to \eqref{intro-eq-2} with $p=r^n$, $n\in\mathbb{N}$ was proved by Hausenblas in \cite{[Hau2011]} by using a discrete approximation argument of stochastic integrals. In this paper, by following the It\^{o} formula approach in the spirit of \cite{Novikov,[Tub]} and using some ingredients from \cite{[Neerven+Zhu]}, we give a short and  simple proof of  \eqref{intro-eq-2} and extend the inequality to all $1\leq p<\infty$; see Theorem \ref{Th-burk}. 	
 In a recent work \cite{[Dir14]} by Dirksen, upper-bound inequalities similar to \eqref{intro-eq-3} and \eqref{intro-eq-main-1} with $E$ being $L^q$ spaces were obtained via a noncommutative probability approach.  	   In this paper, we propose a different approach and generalize the inequalities to all Banach spaces of martingale type $r$ with $1 < r\leq 2$.  Note that all $L^q$ spaces with $q\in[r,\infty)$ are of martingale type $r$.
	 We want to remark that our method is more direct and closer in spirit to the ideas used in Hilbert space and finite dimensional spaces.

	Now we assume that $(e^{tA})_{t\geq0}$ is a $C_0$-semigroup on $(E,|\cdot|_E)$ with a generator $A$ such that $\|e^{tA}\|\leq e^{t\alpha} $  for some $\alpha \geq  0$.  Let $X_t=\int_0^t\int_Z e^{(t-s)A}\xi(s,z)\,\tilde{N}(\d s,\d z)$. 	 In the second part of the present paper, we will prove that there exists a c\`adl\`ag modification $\bar{X}$ of $X$ such that the following inequalities hold
	\begin{enumerate}
	\item[(i)] for all $r\leq p<\infty$,
		\begin{align}
	 \E\sup_{0\leq t\leq T} \big|\bar{X}_t\big|^p\leq e^{\alpha p T}C_{p,r}\,\E\Big(\int_0^T\int_Z |\xi(s,z)|^r_E\, N(\d s,\d z)\Big)^{\frac{p}{r}},
	 \end{align}
	 \item[(ii)] for all $0< p\leq r$,
	 \begin{align}
	  \E\sup_{0\leq t\leq T} \big|\bar{X}_t\big|^p\leq e^{\alpha p T}C_{p,r}\,\E \Big(\int_0^T\int_Z|\xi(s,z)|^{r}_E\,\nu(\d z)\,\d s\Big)^{\frac{p}{r}},
		\end{align}
		\item[(iii)] for all $r\leq p<\infty$,
		\begin{align}
	 \E\sup_{0\leq t\leq T} \big|\bar{X}_t\big|^p\leq e^{\alpha p T}C_{p,r}\Big[\E\Big(\int_0^T\int_Z |\xi(s,z)|^p_E\,\nu(\d z)\,\d s\Big)
		 +\E \Big(\int_0^T\int_Z|\xi(s,z)|^{r}_E\,\nu(\d z)\,\d s\Big)^{\frac{p}{r}}\Big].\label{intro-main-max}
	\end{align}
	\end{enumerate}

	Considering maximal inequalities in infinite dimensional spaces, there exist mainly three different approaches in literature. The first approach is based on the factorization method and stochastic Fubini theorem introduced by Da Prato, Kwapie\'{n} and Zabczyk in \cite{[Da+Kw+Zab]}. By following this approach, a weaker inequality of  $L^p$ estimates of stochastic convolutions for Wiener processes in Hilbert spaces was obtain in \cite{[Da+Zab]}. Another approach is to apply isometric dilation theorems on semigroups that admit dilations; see \cite{[HauSei]}, \cite{[Sei]}, \cite{[VerWei]} and the references therein.
		This unitary dilation method was used in \cite{[HauSei]} to get the maximal inequality for a stochastic convolution driven by a Wiener process and $C_0$-contraction semigroups in a Hilbert space $H$. Later  Seidler in \cite{[Sei]} obtained $L^p$-estimates for stochastic convolutions of positive contraction semigroups in a 2-smooth Banach space $E=L^q(\mu)$, $q\geq 2$ with a sharp constant $C_p=o(\sqrt{p})$.
		We also refer to Veraar and Weis \cite{[VerWei]}, Dirksen, Maas and van Neerven \cite{[DMN]} and Dirksen \cite{[Dirksen]}.
		
	A classical approach
	to prove the
	maximal inequality for stochastic convolution in finite dimensional spaces is to apply the It\^o  formula to a $C^2$-mapping $E \ni x\mapsto |x|^p_E$, $2\leq p<\infty$, and then the classical Burkholder inequality; see e.g. \cite{[Tub]}, \cite{[Ichikawa]}. When $E$ is a Hilbert space, the function $\psi_p: E \ni x\mapsto |x|^p_E$,  is always  of $C^2$ class, see Ichikawa \cite{[Ichikawa]} and Tubaro \cite{[Tub]} for inequalities of stochastic convolutions in Hilbert spaces. While dealing with more general Banach spaces, we encounter a difficulty that the mapping $\psi_p$ may  not even be twice Fr\'{e}chet differentiable. %For this
	In the Gaussian case, this particular problem is addressed in \cite{[Brz+Pes]} where the second named author and Peszat made a certain assumption on $E$, which they call $(H_p)$. Under a similar assumption on the Banach space $E$, the first two authors of this paper and Hausenblas derived in \cite{Zhu2017} a version of maximal inequality for L\'{e}vy-type noises. More precisely, they assumed  that  for some $p\in[2,\infty)$, the function $\psi_p$  is of $C^2$ and its first and second Fr\'{e}chet derivatives are bounded by some constant multiples of $|x|_E^{p-1}$ and $|x|_E^{p-2}$. By means of the It\^o  formula and the Davis inequality, they obtained a maximal inequality for contraction semigroups.
	Note that various spaces do have martingale type $2$ property, but fail to satisfy condition $(H_p)$.  For instance, $L^2(L^q(0,1))$, $q>2$, are martingale type $2$ Banach spaces, but according to Theorem 3.9 in \cite{[Leo+Sun]}, the norm of  $L^2(L^q(0,1))$ is not twice Fr\'{e}chet differentiable away from the origin.
In this paper, we follow the It\^o formula approach and work in a more general setting of Banach spaces. Compared to \cite{[Brz+Pes]} and \cite{Zhu2017}, in the present paper we weaken the assumptions on the Banach spaces. We only assume that the Banach space is of martingale type $r$, $1<r\leq 2$. Note that this is not an easy question. Due to the lack of the twice differentiability property of the $p$-th power of the norm in martingale type $r$ Banach spaces, a straightforward extension of the inequalities seems impossible and a new technique is required.
To do this, we derive two general versions of It\^{o}'s formula in Banach spaces (see Appendix B) which can be applied to the norm function $\psi_p$ directly.
 By following the It\^{o} formula approach and employing the BDG inequality, e.g. \eqref{intro-eq-2}, it allows us to estimate each term properly.   Our proofs here are more succinct.

Let $(e^{tA})_{t\geq0}$ be a $C_0$-contraction semigroup on a martingale type $2$ Banach space $(E,|\cdot|_E)$
		and let $\xi\in\mathcal{M}_T^2(\mathcal{P}\otimes\mathcal{Z},\d t\times\mathbb{P}\times\nu;E)$.
 We will also show the following exponential tail estimates for stochastic convolutions driven by compensated Poisson random measures: if there exists $\lambda>0$ and $M_{\lambda}>0$ such that
	    \begin{align}
	    	    \int_0^T\int_Z e^{\lambda^{\frac{1}{2}}|\xi(s,z)|_E}\lambda|\xi(s,z)|_E^2\,\nu(\d z)\,\d s\leq M_{\lambda},	     \end{align}
	then for every $R>0$ there exists
	a constant $C_{\lambda}>0$ such that
	\begin{align}\label{exp-tail-eq-intro}
		    \mathbb{P}\Big( \sup_{0\leq t\leq T}|X_t|_E \geq R \Big)\leq C_{\lambda}e^{-(1+\lambda R^2)^{\frac{1}{2}}}.
	\end{align}

	Let $E$ be a martingale type $2$ Banach space. We will also study stochastic convolution processes of the form
	\begin{align}
		  X_t=\int_0^te^{(t-s)A}g_s\, \d W_s+\int_0^t\int_Z e^{(t-s)A}\xi(s,z)\,\tilde{N}(\d s,\d z),\;t\in[0,T],\;T>0.
	\end{align}
	Here $g:\mathbb{R}_+\times\Omega\rightarrow \gamma(H;E)$ is a progressively measurable process which is stochastic integrable with respect to a cylindrical Wiener process $W$ on $H$ and $\xi$ is defined as above.
	 We will prove that there exists a c\`adl\`ag modification $\bar{X}$ of $X$ such that
	 	\begin{align}
		\E\sup_{0\leq t\leq T} |\bar{X}_t|_E^p\leq e^{\alpha T}C_{p,r}&\left[\E\Big(\int_0^T \|g_s\|^2_{\gamma(H,E)}\,\d s\Big)^{\frac{p}2}+\E\Big(\int_0^T\int_Z |\xi(s,z)|^p_E\,\nu(\d z)\,\d s\Big)\right.\nonumber\\
		 &\left.+\E \Big(\int_0^T\int_Z|\xi(s,z)|^{2}_E\,\nu(\d z)\,\d s\Big)^{\frac{p}{2}}\right]\text{  for all }2\leq p<\infty.\label{intro-eq-10}
	\end{align}

We emphasize that maximal inequalities developed in this paper  are applicable to many nonlinear SPDEs, including stochastic Euler equations, stochastic reaction-diffusion equations, and stochastic Navier-Stokes equations, etc. As an example we consider in Section \ref{SNSE} the existence and uniqueness of mild solutions of stochastic two dimensional quasi-geostrophic equations via the application of maximal inequalities. We establish the existence and uniqueness of mild solutions of stochastic quasi-geostrophic equations under much weaker assumptions in terms of $L^p$ theory.

The rest of the paper is organized as follows. In the next section we will study BDG inequalities for stochastic integrals. Maximal inequalities and exponential estimates for stochastic convolutions will be treated in Section \ref{sec-max-ineq}. In Section 4 we show the existence and uniqueness of solutions to stochastic quasi-geostrophic equations as the application of our main results. Finally, in Appendix A we give a brief review of some results of stochastic integral w.r.t. Poisson random measures, while in Appendix B we prove two versions of It\^{o}'s formula for Banach space valued L\'{e}vy processes.

	 \section{\bf{BDG inequalities for stochastic integrals driven by L\'{e}}vy  processes}

	      Throughout the whole section we assume that $(E,|\cdot|_E)$ is a real separable Banach space of martingale type $r$, $1<r\leq 2$, see Appendix A for the definition.
	      Let  $(\Omega,\mathcal{F},\mathbb{F},\mathbb{P})$, where $\mathbb{F} = (\mathcal{F}_t )_{t\geq 0}$, be a filtered probability space satisfying the usual hypothesis.
	
	      Let $(Z,\mathcal{Z})$ be a measurable space and $\nu$ be a $\sigma$-finite measure on it.  Assume that  $\pi$ is a $\sigma$-finite and stationary Poisson point process with the characteristic measure $\nu$ such that the counting measure
  $N$ associated with $\pi$ is a time homogeneous Poisson random measure; see Appendix A for the existence of such a process and also \cite{[Ikeda]}. We denote by $\mathcal{P}$ the predictable $\sigma$-field on $[0,T]\times\Omega$, i.e. the $\sigma$-field generated by all left continuous and $\mathcal{F}_t$-adapted
		 real-valued processes on $[0,T]\times\Omega$.

	   For the convenience of the reader we repeat some notations and standard facts about stochastic integration w.r.t. the compensated Poisson random measure which we will be frequently used in the paper. The detailed discussions can be found in Appendix A.
	
	   We use the symbol $\mathcal{M}_T^r(\mathcal{P}\otimes\mathcal{Z},\d t\times\mathbb{P}\times\nu;E)$
		 to denote the space of all $\mathcal{P}\otimes\mathcal{Z}$-measurable functions
		$\xi:[0,T]\times\Omega\times Z\to E$ such that $\xi\in L^r(\Omega;L^r([0,T]\times Z;E))$.
			
		 Take $\xi\in\mathcal{M}_T^r(\mathcal{P}\otimes\mathcal{Z},\d t\times\mathbb{P}\times\nu;E)$. According to \cite{[Brz-Hau]}, the stochastic integral process
		 \begin{align}
		 I_t(\xi)=\int_0^t\int_Z \xi(s,\cdot,z)\,\tilde{N}(\d s,\d z),\;\; t\in[0,T],
		 \end{align}
		  is a c\`{a}dl\`{a}g $r$-integrable $E$-valued martingale and it satisfies the following: inequality
	         	\begin{align}\label{sec-2-eq-10}
		\mathbb{E}\Big{|}\int_0^T\int_Z \xi(s,z)\,\tilde{N}(\d s,\d z)\Big{|}_E^r
		\leq C_r(E)\,\mathbb{E}\int_0^T\int_Z|\xi(s,z)|_E^r\,\nu(\d z)\,\d s.
		\end{align}
		If
			$ \xi $ is a $\mathcal{P}\otimes\mathcal{Z}$-measurable function in $\mathcal{M}_T^1(\mathcal{P}\otimes\mathcal{Z},\d t\times\mathbb{P}\times\nu;E)$,
			then one can show (see Appendix A for the proof) that
			\begin{align}\label{sec-2-eq-13-1}
				\E\int_0^T\int_Z\xi(s,z)\,N(\d s,\d z)=\E\int_0^T\int_Z\xi(s,z)\,\nu(\d z)\,\d s.
			\end{align}
		
			In particular, if
			$\xi\in\mathcal{M}_T^1(\mathcal{P}\otimes\mathcal{Z},\d t\times\mathbb{P}\times\nu;E)\cap \mathcal{M}_T^r(\mathcal{P}\otimes\mathcal{Z},\d t\times\mathbb{P}\times\nu;E)$, then the stochastic integral $I_t(\xi)$ can be written as a sum of two Bochner integrals
			\begin{align}\label{eq-108}
			     \int_0^t\int_Z\xi(s,\cdot,z)\,\tilde{N}(\d s,\d z)=
			     \sum_{s\in(0,t]\cap
			     \mathcal{D}(\pi)}\xi(s,\cdot,\pi(s))-\int_0^t\int_Z\xi(s,\cdot,z)\,\nu(\d z)\,\d s,\quad \P\text{-a.s.}
			\end{align}
			Here $\mathcal{D}(\pi)$ is the domain of the Poisson point process $\pi$.

	  Set
		\begin{align}\label{u_t-def}
		u_t:=\int_0^t\int_Z\xi(s,z)\,\tilde{N}(\d s,\d z),\;\; t\in[0,T].
		\end{align}
	In this section, we aim to establish some types of inequalities and $L^p$ estimates for $(u_t)_{0\leq t\leq T}$.
	
		%--------------------------
		\begin{rem}\label{rem-lem-Kallenberg}  Let $f:[0,T]\times\Omega\times Z\rightarrow \mathbb{R}$ be a real-valued function in $\mathcal{M}_T^1(\mathcal{P}\otimes\mathcal{Z},\d t\times\mathbb{P}\times\nu;\mathbb{R})$.  			Recall that $\int_0^t\int_Z |f(s,z)|\tilde{N}(\d s,\d z)$, $t\in[0,T]$, is an $\mathbb{R}$-valued and $1$-summable martingale; see \cite{[Ikeda]}. Moreover, it can be written as a sum of two Lebesgue integrals in the following way:
			\begin{align*}
				   \int_0^t\int_Z |f(s,z)|\,\tilde{N}(\d s,\d z)=\int_0^t\int_Z |f(s,z)|\,N(\d s,\d z)-\int_0^t\int_Z |f(s,z)|\,\nu(\d z)\,\d s,\ \ \mathbb{P}\text{-a.s.}
				\end{align*}
	So we can deduce that
			\begin{align*}
			        \E\big( \int_0^T\int_Z|f(s,z)|\,N(\d s,\d z)\big|\mathcal{F}_t\big)
			        \geq\E\Big( \int_0^T\int_Z|f(s,z)|\,\nu(\d z)\,\d s-\int_0^t\int_Z|f(s,z)|\nu(\d z)\,\d s\Big|\mathcal{F}_t\Big).
			\end{align*}
			  A simple application of Proposition 25.21 in \cite{[Kallenberg]} with $X=\int_0^T\int_Z|f(s,z)|\,N(\d s,\d z)$ and \\ $A_t=\int_0^t\int_Z|f(s,z)|\,\nu(\d z)\,\d s$, $t\in [0,T]$, yields for all $1\leq p<\infty$,
		   $$ \E\Big(\int_0^T\int_Z|f(s,z)|\,\nu(\d z)\,\d s\Big)^{p} \leq p^p\,\E\Big(\int_0^T\int_Z|f(s,z)|\,N(\d s,\d z)\Big)^p.$$
		\end{rem}
	
	    In Proposition \ref{prop-main-1} and Theorem \ref{Th-burk} below, we will formulate and prove some inequalities for stochastic integrals with respect to compensated Poisson random measures.  We shall follow a common strategy for proving a BDG inequality for real-valued local martingales used in \cite{[Kallenberg]} to obtain inequality \eqref{burk-eq-2}. The idea of this proof can be traced back to \cite{[Davis]}, where a version of the BDG inequality was derived for discrete-time martingales with $p=1$.
	 Similar estimates, analogous to \eqref{main-eq-1}, \eqref{Lr-1}, and \eqref{Lp-1} in finite dimension space, were first studied by Novikov in \cite{Novikov}. The main ingredient of his proof is a version of the change-of-variable formula (It\^{o} formula) for the transformation of a stochastic integral which requires the function only to be continuously differentiable.
	  In a Hilbert space setting, $r=2$, inequalities \eqref{main-eq-1} and \eqref{burk-eq-2} together with the inequality \eqref{burk-eq-1} in Theorem \ref{Th-burk} can be read as
	\begin{align}
	         &\E\sup_{t\in[0,T]} |u_t|_E^p\leq C\,\E\,\langle u\rangle_T^{p/2}, \ \ 0<p<2,\label{rem-1-eq-1}\\
	         & \E\sup_{t\in[0,T]} |u_t|_E^p\leq C\,\E\,[ u]_T^{p/2}, \ \ 1\leq p<\infty.\label{rem-1-eq-2}
	\end{align}
	Here $\langle u\rangle_t=\int_0^t\int_Z|\xi(s,z)|^2\,\nu(\d z)\d s$ and $[u]_t=\int_0^t\int_Z|\xi(s,z)|^2\,N(\d s,\d z)$ are, respectively, the Meyer  and the quadratic variation processes of $u$; see \cite{[Metivier]} for the definitions. Inequality \eqref{rem-1-eq-2} is known as the BDG inequality. Inequalities \eqref{rem-1-eq-1} with $p=2$ and \eqref{rem-1-eq-2} with $2\leq p<\infty$ were studied by M\'{e}tivier \cite{[Metivier]} for Hilbert space valued right-continuous local martingales. Ichikawa in \cite{[Ichikawa]} established a stopped version of inequality \eqref{rem-1-eq-1} for right-continuous martingales in Hilbert spaces. A BDG inequality for a stochastic integral with respect to a cylindrical Brownian motion in Orlicz-type spaces was obtained recently by Xie and Zhang in \cite{XZ17}.
	
	Let us point out that inequality \eqref{main-eq-1} below has already  been stated and proved in \cite{[Brz-Hau]}, but we include it here for  the readers' convenience.
	
	Throughout the paper, we use $C_{a_1,a_2,\cdots}$ to denote a generic positive constant whose value may change from
line to line, but depends only on the designated variables $a_1,a_2,\cdots$.

		\begin{prop}\label{prop-main-1}
		  Let $\xi\in\mathcal{M}_T^r(\mathcal{P}\otimes\mathcal{Z},\d t\times\mathbb{P}\times\nu;E)$ and $(u_t)_{t\in[0,T]}$ be defined by \eqref{u_t-def}.  Then the following inequalities hold provided the expressions on the right hand sides are finite:
		   \begin{enumerate}
		      \item for $0<p\leq r$,
		      	\begin{align}\label{main-eq-1}
	       \E\sup_{t\in[0,T]} |u_t|_E^p\leq C_{p,r}\,\E\Big(\int_0^T\int_Z|\xi(s,z)|_E^r\,\nu(\d z)\,\d s\Big)^{\frac{p}{r}};
	\end{align}
		      \item for $1\leq p\leq r$,
		   \begin{align}\label{burk-eq-2}
		          \E\sup_{t\in[0,T]} |u_t|_E^p\leq C_{p,r}\,\E\Big(\int_0^T\int_Z|\xi(s,z)|_E^r\,N(\d s,\d z)\Big)^{\frac{p}{r}};
		   \end{align}
		   \item for $1\leq p\leq r$,
		   \begin{align}\label{Lr-1}
		       \E\sup_{t\in[0,T]} |u_t|_E^p\leq C_{p,r}\,\E\int_0^T\int_Z|\xi(s,z)|^p_E\,\nu(\d z)\,\d s;
		   \end{align}
		      \item for $1\leq p\leq r$,
	          \begin{align}\label{Bur-P-1}
		       \E\sup_{t\in[0,T]} |u_t|_E^p\leq C_{p,r}\,\E\int_0^T\int_Z|\xi(s,z)|_E^p\,N(\d s,\d z).
		   \end{align}
		   \end{enumerate}
		\end{prop}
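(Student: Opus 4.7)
The plan is to reduce everything to two core ingredients: the a priori moment bound \eqref{sec-2-eq-10} from \cite{[Brz-Hau]} and Lenglart's domination inequality, in the spirit of Kallenberg's treatment of BDG. I would organize the four parts so that (i) and (ii) are the principal statements, obtained by combining these ingredients, while (iii) and (iv) follow from (ii) by an elementary pointwise subadditivity argument.

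For part (i), since $|\cdot|_E$ is convex, $(|u_t|_E)_{t\in[0,T]}$ is a nonnegative c\`{a}dl\`{a}g submartingale. The case $p=r$ is then immediate: Doob's $L^r$ maximal inequality (available since $r>1$) combined with \eqref{sec-2-eq-10} yields
\[
\E\sup_{t\in[0,T]}|u_t|_E^r\leq \Big(\tfrac{r}{r-1}\Big)^{r}C_r(E)\,\E\int_0^T\!\!\int_Z|\xi(s,z)|_E^r\,\nu(\d z)\,\d s.
\]
For $0<p<r$, I would apply \eqref{sec-2-eq-10} to the stopped integrand $\xi\mathbf{1}_{[0,\tau]}$ to obtain $\E|u_\tau|_E^r\leq C_r(E)\,\E A_\tau$ for every bounded stopping time $\tau$, where the continuous predictable increasing process $A_t:=\int_0^t\!\int_Z|\xi|_E^r\nu(\d z)\,\d s$ dominates $X_t:=|u_t|_E^r$. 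Lenglart's domination inequality, applied with exponent $p/r\in(0,1)$, then yields \eqref{main-eq-1}. Part (ii) follows from the same scheme, with $A$ replaced by the adapted, right-continuous, increasing process $B_t:=\int_0^t\!\int_Z|\xi|_E^r\,N(\d s,\d z)$; the domination $\E|u_\tau|_E^r\leq C_r(E)\,\E B_\tau$ is obtained from the identity $\E B_\tau=\E A_\tau$, which is \eqref{sec-2-eq-13-1} applied to the nonnegative predictable integrand $|\xi|_E^r\mathbf{1}_{[0,\tau]}$.

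To derive parts (iii) and (iv) from (ii), I would use that $x\mapsto x^{p/r}$ is subadditive on $[0,\infty)$ when $p/r\leq 1$. Since the jump integral is a sum of nonnegative contributions at the Poisson epochs,
\[
\Big(\int_0^T\!\!\int_Z|\xi(s,z)|_E^r\,N(\d s,\d z)\Big)^{p/r}\leq \int_0^T\!\!\int_Z|\xi(s,z)|_E^p\,N(\d s,\d z)\qquad\text{a.s.},
\]
which combined with \eqref{burk-eq-2} gives \eqref{Bur-P-1}, i.e., part (iv). A further application of \eqref{sec-2-eq-13-1} to the nonnegative predictable integrand $|\xi|_E^p$ (integrable since the right-hand side of \eqref{Lr-1} is assumed finite) replaces the jump integral by its compensator and delivers \eqref{Lr-1}, part (iii).

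The main obstacle I anticipate is the Lenglart step in parts (i)--(ii): one must justify the localization by bounded stopping times so that \eqref{sec-2-eq-10} can be applied to $\xi\mathbf{1}_{[0,\tau]}$ (which amounts to checking that $(u_{t\wedge\tau})$ is the stochastic integral of the stopped integrand), verify the integrability assumptions that let Lenglart upgrade the stopping-time $L^r$-bound into an $L^{p/r}$-bound of the supremum, and, in part (ii), handle the lack of predictability of the dominator $B$ by invoking the version of Lenglart's inequality that applies to adapted right-continuous increasing dominators. Once this step is in place, the rest of the proposition is a sequence of elementary consequences.
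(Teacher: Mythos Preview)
Your scheme for parts (i), (iii), and (iv) is correct and matches the paper. The gap is in part (ii): the ``version of Lenglart for adapted increasing dominators'' you invoke does not deliver the clean bound $\E\big(\sup_t|u_t|_E^r\big)^{p/r}\leq C\,\E B_T^{p/r}$ when $p<r$. With a non-predictable dominator, the Lenglart estimate picks up the overshoot $\Delta B_{\sigma}$ at the stopping time $\sigma=\inf\{t:B_t>\eta\}$, i.e.\ one only gets $\E B_{\sigma\wedge T}\le \eta+\E[\Delta B_{\sigma}\mathbf{1}_{\{\sigma\le T\}}]$, and this jump term does not integrate against $\lambda^{p-r-1}$ near $0$. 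Equivalently, one cannot pass from (i) to (ii) by comparing compensator and counting integral: for $k=p/r\in(0,1)$ there is no universal bound $\E A_T^{k}\le C\,\E B_T^{k}$ (take $\xi\equiv 1$ on a rate-one Poisson, so $A_T=T$, $B_T=N_T$; then $\E A_T^{k}/\E B_T^{k}\sim T^{k-1}\to\infty$ as $T\downarrow 0$).

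The paper handles (ii) by a Davis-type decomposition rather than Lenglart: one splits $u=D+H$, where $D$ collects the jumps satisfying $|\Delta u_s|>2\sup_{v<s}|\Delta u_v|$ (and their compensator) and $H$ carries the remaining ``small'' jumps. The big-jump part is controlled by the telescoping bound $\sum_s|\Delta u_s|\mathbf{1}_{\{|\Delta u_s|>2\sup_{v<s}|\Delta u_v|\}}\le 2\sup_s|\Delta u_s|\le 2B_T^{1/r}$, which gives $\E\sup_t|D_t|^p\le C\,\E B_T^{p/r}$. For $H$, stopping at $\tau_\lambda=\inf\{t:(\text{small-jump }[\,\cdot\,])^{1/r}\vee\sup_s|\Delta u_s|>\lambda\}$ yields $\P(\sup_t|H_t|>\lambda)\le C\lambda^{-r}\E[B_T\wedge\lambda^r]$ \emph{without} an overshoot term (the max-jump cutoff in $\tau_\lambda$ absorbs it), and integrating in $\lambda$ gives the $p$-th moment. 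This jump-splitting is the missing idea in your plan.
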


		\begin{proof}
			(1) See Corollary C.2 in \cite{[Brz-Hau]}.\\	
	 \noindent(2). Let us fix $r \geq 1$. Note that $\triangle u_t=\xi(t,\pi(t))1_{\mathcal{D}(\pi)}(\pi(t))$ and also observe that
			\begin{align}
			\begin{split}\label{prp-main-eq-1}
				\E \sum_{s\leq t}|\xi(s,\pi(s))|_E1_{\{|\xi(s,\pi(s))|_E>2\sup\limits_{v<s}|\xi(v,\pi(v))|_E\}}&\leq 2\E\sum_{s\leq t}\Big(\sup_{v\leq s}|\xi(v,\pi(v))|_E-\sup_{v<s}|\xi(v,\pi(v))|_E\Big) \\
				&\leq 2\E\sup_{s\leq t} |\xi(s,\pi(s))|_E\leq 2\E (\sum_{s\leq t}|\xi(s,\pi(s))|_E^r)^{\frac{1}{r}}\\
				&=2\E\Big(\int_0^t\int_Z|\xi(s,z)|_E^r\,N(\d s,\d z)\Big)^{\frac{1}{r}}.
								\end{split}
	        \end{align}
	Thus we can define a process $A_t$, $t\in[0,T]$, by
	    \begin{align*}
			    A_t&:=\sum_{s\leq t}\xi(s,\pi(s))1_{\mathcal{D}(\pi)}(\pi(s))1_{\{|\triangle u_s|_E>2\sup\limits_{v<s}|\triangle u_v|_E\}},\;\;t\in[0,T].
			\end{align*}
	Since the process $(u_t)$ is right-continuous with left limits, the function $X_t:=\sup_{s<t}|\triangle u_s|_E$
	is left-continuous. So this together with the adaptedness implies that $X_t$ is $\mathcal{P}$-measurable. By assumption, the function $\xi$ is $\mathcal{P}\otimes\mathcal{Z}$-measurable. Hence we infer that 	$\{(s,\omega,z):|\xi(s,\omega,z)|_E>2\sup_{s< t}|\triangle u_s|_E\}\in\mathcal{P}\otimes\mathcal{Z}$. Similarly, one can find that $\{(s,\omega,z):|\xi(s,\omega,z)|_E\leq2\sup_{s< t}|\triangle u_s|_E\}\in\mathcal{P}\otimes\mathcal{Z}$.\\
	  Notice that $A_t=\int_0^t\int_Z\xi(s,z)1_{\{|\xi(s,z)|_E>2\sup\limits_{v<s}|\triangle u_v|_E\}}\;N(\d s,\d z)$.
	
	 By the predictability of the function $\xi(s,z)1_{\{|\xi(s,z)|_E>2\sup\limits_{v<s}|\triangle u_v|_E\}}$, we have
	\begin{align*}
		   \E\int_0^t\int_Z\xi(s,z)1_{\{|\xi(s,z)|_E>2\sup\limits_{v<s}|\triangle u_v|_E\}}\,\nu(\d z)\,\d s=\E\int_0^t\int_Z\xi(s,z)1_{\{|\xi(s,z)|_E>2\sup\limits_{v<s}|\triangle u_v|_E\}}\,N(\d s,\d z)<\infty.
		\end{align*}
		 Hence by Equality \eqref{A.prop-3-1}, we infer that the process  $ \hat{A}_t:=\int_0^t\int_Z\xi(s,z)1_{\{|\xi(s,z)|_E>2\sup\limits_{v<s}|\triangle u_v|_E\}}\,\nu(\d z)\,\d s$, $t\in [0,T]$ is the compensator of the process $A$. Moreover the process
		      \begin{align*}
		    	   D_t:=A_t-\hat{A}_t= \int_0^t\int_Z\xi(s,z)1_{\{|\xi(s,z)|_E>2\sup\limits_{v<s}|\triangle u_v|_E\}}\,\tilde{N}(\d s,\d z), \;\; t\in [0,T]
		      \end{align*}
	is an $L_1$-integrable martingale. Hence,
			by Remark \ref{rem-lem-Kallenberg}  and an argument analogous to \eqref{prp-main-eq-1}, we infer, for every $1\leq p<\infty$,
			\begin{align}
			\begin{split}\label{pro-N-eq-11}
				  \E\sup_{t\leq T}|D_t|_E^p
				  				\leq& 2^{p-1}\E(A_T)^p
				+2^{p-1}\E(\hat{A}_T)^p\\
				         \leq& 2^{p-1}(1+p^p)\E\Big(\int_0^T\int_Z|\xi(s,z)|_E1_{\{|\xi(s,z)|_E>2\sup\limits_{v<s}|\triangle u_v|_E\}}\,N(\d s,\d z)\Big)^p \\
				      \leq& C_p \E\Big(\int_0^t\int_Z|\xi(s,z)|_E^r\, N(\d s,\d z)\Big)^{\frac{p}{r}}.
				      \end{split}
			\end{align}	
			Set
			\begin{align*}
				    H_t:=u_t-D_t
				=\int_0^t\int_Z\xi(s,z)1_{\{|\xi(s,z)|_E\leq2\sup\limits_{v<s}|\triangle u_v|_E\}}\,\tilde{N}(\d s,\d z),\;\; t\in[0,T].
			\end{align*}
			Clearly, the process $H=(H_t)_{t\in[0,T]}$,  is an $r$-integrable $E$-valued martingale. 	
			Let us fix an auxiliary number $\lambda>0$ and introduce the following random variable
			\begin{align}\label{eqn-tau lambda}
				     \tau_{\lambda}:=\inf\Big\{t:\Big(\int_0^t\int_Z|\xi(s,z)|_E^r1_{\{|\xi(s,z)|_E\leq 2\sup\limits_{v<s}|\triangle u_v|_E\}}\,N(\d s,\d z)\Big)^{\frac{1}{r}}\vee\sup_{s\leq t}|\triangle u_s|_E>\lambda\Big\}\wedge T.
			 \end{align}
		   By the right-continuity of $\int_0^t\int_Z|\xi(s,z)|_E^r1_{\{|\xi(s,z)|_E\leq 2\sup\limits_{v<s}|\triangle u_v|_E\}}\,N(\d s,\d z)$ and $\sup\limits_{s\leq t}|\triangle u_s|_E$, we infer that $\tau_{\lambda}$ is a stopping time; see Example I.5.1 in \cite{[Ikeda]}.
			Hence, 			by using first the Chebyshev and Doob inequalities and then \eqref{eqn-tau lambda} we infer
			\begin{align*}
				    \mathbb{P}\{\sup_{t\leq \tau_{\lambda}}|H_t|_E>\lambda \}
								&\leq \frac{C_r}{\lambda^r}\,\E\Big(\int_0^{\tau_{\lambda}}\int_Z|\xi(s,z)|_E^r1_{\{|\xi(s,z)|_E\leq2\sup\limits_{v<s}|\triangle u_v|_E\}}\,N(\d s,\d z)\Big)\\
							&\leq \frac{C_r}{\lambda^r}\,\E\Big(\int_0^{\tau_{\lambda}-}\int_Z|\xi(s,z)|_E^r1_{\{|\xi(s,z)|_E\leq2\sup\limits_{v<s}|\triangle u_v|_E\}}\,N(\d s,\d z)+2^r\sup_{v<\tau_{\lambda}}|\xi(v,\pi(v))|_E^r\Big)  \\
			  			&\leq C_r\frac{\lambda^r}{\lambda^r}.
			  \end{align*}
			  Here $C_r$ is a constant depending only on $r$ but whose value may change from line to line.
		                  Hence it follows that
		\begin{align}\label{Pro-N-eq-10}
			   \mathbb{P}\{\sup_{t\leq \tau_{\lambda}}|H_t|_E>\lambda \}\leq C_r\frac{1}{\lambda^r}\,\E\Big(\Big(\int_0^T\int_Z|\xi(s,z)|_E^r\, N(\d s,\d z)\Big)\wedge \lambda^r\Big).
		 \end{align}
		Observe that
		\begin{align}
		\begin{split}\label{eq-Th-1-proof-1}
			  \mathbb{P}\{\sup_{t\leq T}|H_t|_E>\lambda\}&\leq \mathbb{P}\{\sup_{t\leq T}|H_t|_E>\lambda,\tau_{\lambda}=T\}+\mathbb{P}\{\tau_{\lambda}<T\}\\
			&\leq\mathbb{P}\{\sup_{t\leq \tau_{\lambda}}|H_t|_E>\lambda\}+\mathbb{P}\{\sup_{t\leq T}|\xi(t,\pi(t))|_E>\lambda\}\\
			&\hspace{0.5cm}+\mathbb{P}\Big\{ \Big(\int_0^T\int_Z|\xi(s,z)|_E^r1_{\{|\xi(s,z)|_E\leq2\sup\limits_{v<s}|\triangle u_v|_E\}}\,N(\d s,\d z)\Big)^{\frac{1}{r}}>\lambda \Big\}.
			\end{split}
		\end{align}
		Let us now fix $p$:  $1\leq p<r$. 		By using the following standard equalities from \cite{[Ichikawa]},
		 \begin{align*}
					      \E X^p&=\int_0^{\infty}p\lambda^{p-1}\mathbb{P}\{X>\lambda\}\,\d\lambda,\\
					    \E X^p&=\frac{r-p}{r}\int_0^{\infty}\E(X^r\wedge \lambda^r) p\lambda^{p-r-1}\,\d\lambda,
				\end{align*}	
		as well as  inequalities \eqref{pro-N-eq-11}, \eqref{Pro-N-eq-10}, and \eqref{eq-Th-1-proof-1}, we get
		\begin{align*}
			  \E \sup_{s\leq T}|H_t|_E^p
			  			                          \leq& \int_0^{\infty}p\lambda^{p-1}\mathbb{P}\{\sup_{t\leq \tau_{\lambda}}|H_t|_E>\lambda\}\,\d\lambda
			+ \int_0^{\infty}p\lambda^{p-1}\mathbb{P}\{\sup_{t\leq T}|\xi(t,\pi(t))|_E>\lambda\}\,\d\lambda\\
			&+\int_0^{\infty}p\lambda^{p-1}\mathbb{P}\Big\{ \int_0^T\int_Z|\xi(s,z)|_E^r1_{\{|\xi(s,z)|_E\leq2\sup\limits_{v<s}|\triangle u_v|_E\}}\;N(\d s,\d z)\Big)^{\frac{1}{r}}>\lambda \Big\}\,\d\lambda\\
			\leq& C_r\,\int_0^{\infty}p\lambda^{p-r-1}\E\Big(\Big(\int_0^T\int_Z|\xi(s,z)|_E^r\, N(\d s,\d z)\Big)\wedge \lambda^r\Big)\;\d\lambda\\
			&+\E\Big(\sup_{t\leq T}|\xi(s,\pi(s))|_E\Big)^{p}
			+\E\Big( \int_0^T\int_Z|\xi(s,z)|_E^r
						\,N(\d s,\d z)\Big)^{\frac{p}{r}}\\
			\leq& C_{p,r}\,\E\Big( \int_0^T\int_Z|\xi(s,z)|_E^r\,N(\d s,\d z)\Big)^{\frac{p}{r}}.  	
		\end{align*}
		This together with \eqref{pro-N-eq-11} allows us to infer
		\begin{align*}
			\E\sup_{t\leq T}|u_t|_E^p
			       \leq C_{p,r}\,\E\Big( \int_0^T\int_Z|\xi(s,z)|_E^r\,N(\d s,\d z)\Big)^{\frac{p}{r}}.
		 \end{align*}
		%\item[(3)]
	  (3).  Let us fix numbers $r\geq 1 $ and $p\in [1,r]$. Interpreting $\int_0^T\int_Z|\xi(s,z)|_E^r\,N(\d s,\d z)$ as $\sum\limits_{s\in[0,T]}|\xi(s,\pi(s))|^r$, we easily obtain
	  	\begin{align}
		\begin{split}\label{burk-proof-nu-1}
		     \E\big(\int_0^T\int_Z|\xi(s,z)|_E^r\;N(\d s,\d z)\big)^{\frac{p}{r}}
		     &=\E\big(\sum_{s\in[0,T]}|\xi(s,\pi(s))|_E^r1_{\mathcal{D}(\pi)}(\pi(s))\big)^{\frac{p}{r}}\\
		     &\leq \E\sum_{s\in[0,T]}|\xi(s,\pi(s))|_E^p1_{\mathcal{D}(\pi)}(\pi(s))\\
		     &=\E\int_0^T\int_Z|\xi(s,z)|_E^p\,N(\d s,\d z)\\
		     &=\E\int_0^T\int_Z|\xi(s,z)|^p_E\,\nu(\d z)\,\d s.
		     \end{split}
		\end{align}
		Combining the above inequality with \eqref{burk-eq-2} we get the result.
		\\
	   \noindent(4).   This is implicit in the proof of part (3).
		\end{proof}

	  Before proceeding further, let us recall that according to \cite[Theorem 3.1]{[Pisier]}(see also \cite{Deville}), every Banach space $(E,|\cdot|_E)$ of martingale type $r$, $1<r\leq 2$, admits an equivalent norm, for simplicity denoted also by $|\cdot|_E$, such that $(E,|\cdot|_E)$ is an $r$-smooth Banach space and,  for every $p \in (r,\infty)$ the first derivative of the $p$-th power of the norm $\psi_p(\cdot):=|\cdot|_E^p$,  is locally $(r-1)$-H\"{o}lder continuous on $E$, i.e.
	       \begin{align}\label{Ap-rem-eq-2}
			\|\psi_p^\prime(x)-\psi_p^\prime(y)\|_{L(E)}\leq C_{p,r}(|x|_E+|y|_E)^{p-r}|x-y|_E^{r-1},\;\;\; x,y\in E.
		\end{align}
		The proof of the above inequality can be found in \cite{[Neerven+Zhu]}.
		%--------------------------Theorem-----------------------
	\begin{thm}\label{Th-burk} Suppose that $\xi\in\mathcal{M}_T^r(\mathcal{P}\otimes\mathcal{Z},\d t\times\mathbb{P}\times\nu;E)$ and $(u_t)_{t\in[0,T]}$ is defined by \eqref{u_t-def}.  Then for $r\leq p<\infty$, \dela{\coma{Don't we need to mention the modification here?}}
		\begin{align}\label{burk-eq-1}
			        \E\sup_{t\in[0,T]} |u_t|_E^p\leq C_{p,r}\,\E\Big(\int_0^T\int_Z |\xi(s,z)|_E^r \,N(\d s,\d z)\Big)^{\frac{p}{r}}.
		\end{align}

	\end{thm}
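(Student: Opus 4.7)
The plan is to apply It\^o's formula from Appendix B to the $C^1$ function $\psi_p(x):=|x|_E^p$ composed with the process $u$, then estimate the resulting nondecreasing ``drift'' part and real-valued local martingale part separately, and finally absorb powers of $u^*_T:=\sup_{t\le T}|u_t|_E$ into the left-hand side via Young's inequality, in the spirit of \cite{Novikov,[Tub]}. The It\^o decomposition takes the form
\begin{align*}
|u_t|_E^p &= \int_0^t\!\!\int_Z\bigl[\psi_p(u_{s-}+\xi(s,z))-\psi_p(u_{s-})-\langle\psi_p'(u_{s-}),\xi(s,z)\rangle\bigr]\,N(\d s,\d z)\\
&\quad+\int_0^t\!\!\int_Z\langle\psi_p'(u_{s-}),\xi(s,z)\rangle\,\tilde N(\d s,\d z)=:A_t+M_t,
\end{align*}
with $A_t\ge 0$ nondecreasing by the convexity of $\psi_p$, and $M_t$ a real-valued local martingale; Appendix B is tailored precisely to handle the fact that $\psi_p$ is in general only once (and not twice) Fr\'echet differentiable in our $r$-smooth setting.

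For the drift $A_t$, integrating \eqref{Ap-rem-eq-2} along the segment from $x$ to $x+h$ yields the Taylor-type estimate $|\psi_p(x+h)-\psi_p(x)-\langle\psi_p'(x),h\rangle|\le C_{p,r}(|x|_E+|h|_E)^{p-r}|h|_E^r$. Combined with $(a+b)^{p-r}\le 2^{p-r}(a^{p-r}+b^{p-r})$ (valid since $p\ge r$), with $|u_{s-}|_E\le u^*_T$, and with the elementary jumpwise inequality $\sum a_i^p\le(\sum a_i^r)^{p/r}$ for $p\ge r$, I obtain
\[
A_T\le C_{p,r}(u^*_T)^{p-r}\int_0^T\!\!\int_Z|\xi|_E^r\,N(\d s,\d z)+C_{p,r}\Bigl(\int_0^T\!\!\int_Z|\xi|_E^r\,N(\d s,\d z)\Bigr)^{p/r}.
\]
For $M_t$, since it is real valued I apply \eqref{burk-eq-2} with $E=\R$ and $p=1$ together with the pointwise bound $|\psi_p'(x)|_{E^*}\le p|x|_E^{p-1}$ to get
\[
\E\sup_{t\le T}|M_t|\le C_{p,r}\,\E\Bigl(\int_0^T\!\!\int_Z|u_{s-}|_E^{r(p-1)}|\xi|_E^r\,N(\d s,\d z)\Bigr)^{1/r}\le C_{p,r}\,\E\Bigl[(u^*_T)^{p-1}\Bigl(\int_0^T\!\!\int_Z|\xi|_E^r\,N(\d s,\d z)\Bigr)^{1/r}\Bigr].
\]

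Taking $\sup_{t\le T}$ and expectations in $|u_t|_E^p=A_t+M_t$, summing the two bounds, and applying Young's inequality with conjugate exponents $(p/(p-r),p/r)$ and $(p/(p-1),p)$ to the two mixed products involving $u^*_T$ then yields, for any $\varepsilon>0$,
\[
\E(u^*_T)^p\le C_{p,r}\varepsilon\,\E(u^*_T)^p+C_{p,r,\varepsilon}\,\E\Bigl(\int_0^T\!\!\int_Z|\xi|_E^r\,N(\d s,\d z)\Bigr)^{p/r},
\]
and the claim follows by choosing $\varepsilon$ small enough to absorb the first term on the right. The degenerate case $p=r$ is already covered by \eqref{burk-eq-2}, so only $p>r$ needs to be treated. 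The hard part is that this absorption step is only legitimate if $\E(u^*_T)^p<\infty$ a priori, which is not assumed. I would handle this via a standard localization: set $\tau_n:=\inf\{t:|u_t|_E\ge n\}\wedge T$ together with a jump truncation $\xi^{(n)}:=\xi\,\mathbf{1}_{|\xi|_E\le n}$, run the entire argument above for the regularized process (for which all norms are bounded and the absorption is justified) with constants independent of $n$, and then pass to the limit $n\to\infty$ by monotone convergence.
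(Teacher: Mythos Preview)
Your proof is correct and follows essentially the same route as the paper: apply the It\^o formula from Appendix~B to $\psi_p$, split into the compensated-martingale piece $I_1=M$ and the $N$-integral drift piece $I_2=A$, bound each using \eqref{burk-eq-2} (with $p=1$) and the H\"older estimate \eqref{Ap-rem-eq-2} for $\psi_p'$, and absorb $(u^*_T)^{p-1}$ and $(u^*_T)^{p-r}$ via Young's inequality. The only minor difference is in the drift bound: the paper exploits the identity $u_{s-}+\xi(s,\pi(s))=u_s$ to get $|u_{s-}+\theta\xi|_E\le u^*_T$ directly (so the remainder is $\le C_{p,r}(u^*_T)^{p-r}|\xi|^r$ with a single term), whereas you split $(|u_{s-}|+|\xi|)^{p-r}$ and control the extra $|\xi|^p$ part by $\sum a_i^p\le(\sum a_i^r)^{p/r}$; both work, and your explicit treatment of the localization (the paper's $\sigma_n$ is never defined) is in fact cleaner.
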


	%--------proof-----------------------	
	\begin{proof}
Let us fix $r$ and $p$:  $1<r\leq 2$, $r\leq p<\infty$. For the simplicity of notation we  put $\psi:=\psi_p$.
			  An application of the It\^o  formula \eqref{theo-Ito-3} to the function $\psi$ and the process $(u_t)_{t\in[0,T]}$ yields for $t\in[0,T]$, $\mathbb{P}$-a.s.
		\begin{align*}%\label{EQ-1}
		   \psi(u_t)
		   =I_1(t)+I_2(t),
		\end{align*}
		where
		$$I_1(t)=\int_0^t\int_Z\psi^\prime(u_{s-})(\xi(s,z))\,\tilde{N}(\d s,\d z),$$
		$$I_2(t)=\int_0^t\int_Z\Big{[}\psi(u_{s-}+\xi(s,z))-\psi(u_{s-})-\psi^\prime(u_{s-})(\xi(s,z))\Big{]}\,N(\d s,\d z).$$
		Let $\varepsilon>0$. Applying inequality \eqref{burk-eq-2} with $p=1$ gives 		
		\begin{align}
		\E\sup_{0\leq t\leq T}|I_1(t)|
		&\leq C\,\E\left(\int_0^T\int_Z|\psi^\prime(u_{s-})(\xi(s,z))|^r\,N(\d s,\d z)\right)^{\frac1r}\nonumber\\
		&\leq C_p\,\E \sup_{0\leq t\leq T}|{u_t}|_E^{p-1}\Big(\int_0^T\int_Z|\xi(s,z)|^r_E\,N(\d s,\d z)\Big)^{\frac1r}\nonumber\\
		&\leq \varepsilon C_p \,\E\sup_{0\leq t\leq T}|{u_t}|_E^{p}+C_{\frac{1}{\varepsilon},p}\,\E\Big(\int_0^T\int_Z|\xi(s,z)|^r_E\,N(\d s,\d z)\Big)^{\frac{p}{r}},\label{burk-ineq-I_1}
		\end{align}
		where in the last step we used H\"{o}lder's inequality and Young's inequality.

		On the other hand, applying the mean value theorem, we find that for every $s\in(0,T]$
		\begin{align}
		\begin{split}\label{Bur-N-proof-eq1}
			 &\Big|\psi(u_{s-}+\xi(s,\pi(s)))-\psi(u_{s-})-\psi^\prime(u_{s-})(\xi(s,\pi(s)))\Big|\\
			&\leq\Big|\psi^\prime(u_{s-}+\theta \xi(s,\pi(s)))-\psi^\prime(u_{s-})\Big|_{L(E,\mathbb{R})}|\xi(s,\pi(s))|_E.
		  \end{split}
	    \end{align}
			Observe that for all $0\leq s\leq T$,
			\begin{align*}
			|u_{s-}|_E\leq\sup_{0\leq s\leq
			T}|u_{s-}|_E\leq\sup_{0\leq s\leq
			T}|u_{s}|_E.
			\end{align*}
		  Also, since $u_{s-}+\xi(s,\pi(s))=u_s$, we get
			\begin{align*}
			|u_{s-}+\xi(s,\pi(s))|_E\leq\sup_{0\leq s\leq
			T}|u_s|_E.
			\end{align*}
		  	By using the
				fact that $|x+\theta y|_E\leq \max\{|x|_E,|x+y|_E\}$ for $0<\theta<1$ and all $x,y\in
		   	E$, we obtain
			\begin{align}\label{th-eq-2}
				   \big{|}u_{s-}+\theta\xi(s,\pi(s))\big{|}_E\leq \max\big{\{}|u_{s-}|_E,\big{|}u_{s-}+\xi(s,\pi(s))\big{|}_E\big{\}}\leq\sup_{0\leq s\leq T}|u_s|_E.
				\end{align}
	      Hence by \eqref{Ap-rem-eq-2}, we infer
	    \begin{align}\label{th-eq-1}
		|\psi^\prime(u_{s-}+\theta \xi(s,\pi(s)))-\psi^\prime(u_{s-})\Big|_{L(E,\mathbb{R})}&\leq C_{p,r}(|u_{s-}|_{E}+|u_{s-}+\theta \xi(s,\pi(s))|_{E})^{p-r}|\xi(s,\pi(x))|_{E}^{r-1}\nonumber\\
		&\leq C_{p,r}\sup_{s\in[0,T]}|u_s|^{p-r}_E|\xi(s,\pi(s))|^{r-1}_E.
	    	    \end{align}
		It then follows from \eqref{Bur-N-proof-eq1} and \eqref{th-eq-1}  that for every $s\in(0,T]$,
		\begin{align}\label{Bur-N-proof-mian-eq1}
			 &\Big|\psi(u_{s-}+\xi(s,\pi(s)))-\psi(u_{s-})-\psi^\prime(u_{s-})(\xi(s,\pi(s)))\Big|
		\leq C_{p,r}\sup_{s\in[0,T]}|u_s|^{p-r}_E|\xi(s,\pi(s))|^{r}_E.
	    \end{align}
	Hence by H\"{o}lder's  and Young's inequalities we have
	\begin{align}
		\E\sup_{t\in[0,T]}|I_2(t)|&\leq \E  \int_0^{T}\int_Z\Big{|}\psi(u_{s-}+\xi(s,z))-\psi(u(s-)-\psi^\prime(u_{s-})(\xi(s,z))\Big{|}\,N(\d s,\d z)\nonumber\\
			&\leq C_{p,r}\,\E\sup_{t\in[0,T]}|u_t|^{p-r}_E\int_0^T\int_Z |\xi(s,\pi(s))|^r_E  \,N(\d s,\d z)           \label{sec-1-eq-21}\\
				&\leq C_{p,r}\,\left(\varepsilon\,\E\sup_{t\in[0,T]}|u_t|_E^p\right)^{\frac{p-r}{p}}\Big(\Big(\frac{1}{\varepsilon}\Big)^{\frac{p-r}{r}}\,\E \Big(\int_0^{T}\int_Z|\xi(s,z)|_E^r\,N(\d s,\d z)\Big)^{\frac{p}{r}}\Big)^{\frac{r}{p}}\nonumber  \\
				&\leq C_{p,r}\,\varepsilon\, \E\sup_{t\in[0,T]}|u_t|^p_E+C_{p,r}\Big(\frac{1}{\varepsilon}\Big)^{\frac{p-r}{r}}\,\E \Big(\int_0^{T}\int_Z|\xi(s,z)|_E^r \,N(\d s,\d z)\Big)^{\frac{p}{r}}.\nonumber
	\end{align}	

				Combining \eqref{burk-ineq-I_1} and \eqref{sec-1-eq-21} and choosing a suitable small value of $\varepsilon$, we get
	 				\begin{align*}
				    \E\sup_{t\in[0,T\wedge\sigma_n]}|u_t|_E^p\leq C_{p,r}\,\E \Big(\int_0^{T}\int_Z|\xi(s,z)|_E^r \;N(\d s,\d z)\Big)^{\frac{p}{r}}.
					\end{align*}
	\end{proof}	

	 	The following $L^p$ estimate \eqref{Lp-1} of $(u_t)_{t\in[0,T]}$ for all $r\leq p<\infty$, as a consequence of Theorem \ref{Th-burk}, is an important and useful tool in studying solutions of SPDEs, especially the regularity of solutions.
	%----------------------------
	\begin{cor}\label{coro-burk-1}  For all $r\leq p<\infty$, we have
		\begin{align}\label{Lp-1}
		 \E\sup_{t\in[0,T]}\big| u_t \big|_E^{p}\leq C_{p,r}\,\Big[\E\Big(\int_0^T\int_Z |\xi(s,z)|^p_E\,\nu(\d z)\,\d s\Big)+\E \Big(\int_0^T\int_Z|\xi(s,z)|^{r}_E\,\nu(\d z)\,\d s\Big)^{\frac{p}{r}}\Big].
		\end{align}
	\end{cor}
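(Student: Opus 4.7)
The plan is to reduce \eqref{Lp-1} to the scalar moment estimate
\begin{align}\label{pp-scalar}
\E\Big(\int_0^T\!\int_Z h\,N(\d s,\d z)\Big)^q \leq C_{p,r}\Big[\E\int_0^T\!\int_Z h^q\,\nu(\d z)\,\d s + \E\Big(\int_0^T\!\int_Z h\,\nu(\d z)\,\d s\Big)^q\Big],
\end{align}
where $h:=|\xi|_E^r\geq 0$ and $q:=p/r\geq 1$; Theorem \ref{Th-burk} then produces \eqref{Lp-1} once $h$ is substituted back. To prove \eqref{pp-scalar} I would decompose $\int h\,N = M_T + A_T$ with $A_T:=\int_0^T\int_Z h\,\nu(\d z)\,\d s$ and $M_T:=\int_0^T\int_Z h\,\tilde N(\d s,\d z)$ a real-valued c\`adl\`ag martingale; the elementary bound $(a+b)^q\leq 2^{q-1}(|a|^q+b^q)$ reduces the task to estimating $\E|M_T|^q$, since $\E A_T^q$ already equals the desired quantity $\E(\int|\xi|_E^r\nu(\d z)\,\d s)^{p/r}$.

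In the range $q\in[1,2]$ (i.e.~$r\leq p\leq 2r$), I would apply Proposition \ref{prop-main-1}(3) to $M_T$, viewing $\R$ as a Banach space of martingale type $2$ with integrand $h$ and exponent $q$; this yields $\E|M_T|^q\leq C_q\,\E\int_0^T\int_Z h^q\,\nu(\d z)\,\d s = C_q\,\E\int_0^T\int_Z|\xi|_E^p\,\nu(\d z)\,\d s$, finishing \eqref{pp-scalar} in this regime.

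For $q>2$, Theorem \ref{Th-burk} applied to $M_T$ (with $r=2$ and exponent $q$) gives $\E|M_T|^q\leq C_q\,\E(\int h^2\,N)^{q/2}$. I would then use the pathwise H\"older inequality on the counting measure $N$ via the factorization $h^2 = h^{q/(q-1)}\cdot h^{(q-2)/(q-1)}$ with conjugate exponents $(q-1,\,(q-1)/(q-2))$ (both positive since $q>2$) to obtain
\begin{align*}
\int h^2\,N \;\leq\; \Big(\int h^q\,N\Big)^{1/(q-1)}\Big(\int h\,N\Big)^{(q-2)/(q-1)}.
\end{align*}
Raising to the power $q/2$ and applying Young's inequality with parameter $\e>0$ -- using the Young conjugates $2(q-1)/q$ and $2(q-1)/(q-2)$, chosen so that the resulting exponents are $1$ on $\int h^q\,N$ and $q$ on $\int h\,N$ -- gives $(\int h^2\,N)^{q/2}\leq \e(\int h\,N)^q + C_\e \int h^q\,N$. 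Taking expectations, substituting into $\E(\int h\,N)^q\leq 2^{q-1}\E|M_T|^q + 2^{q-1}\E A_T^q$, and then choosing $\e$ so that $2^{q-1}C_q\e<\tfrac{1}{2}$, I can absorb the $\E(\int h\,N)^q$ factor from the right into the left-hand side, yielding \eqref{pp-scalar}.

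The principal technical difficulty is the absorption step, which is only legitimate once one knows $\E(\int h\,N)^q$ is a priori finite. I would handle this by the usual localization: truncate $\xi$ to $\xi_n:=\xi\,1_{\{|\xi|_E\leq n\}}1_{[0,\tau_n]}$ for a sequence of stopping times $\tau_n\uparrow T$ chosen so that the left-hand side is a priori finite for each $n$, carry out the argument above on each $\xi_n$, and then pass to the limit $n\to\infty$ via monotone convergence (or Fatou's lemma) on both sides of \eqref{pp-scalar}, whence \eqref{Lp-1} follows from Theorem \ref{Th-burk}.
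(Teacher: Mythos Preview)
Your argument is correct, and it takes a genuinely different route from the paper's. Both proofs begin the same way: Theorem~\ref{Th-burk} reduces everything to the scalar moment estimate~\eqref{pp-scalar} for $h=|\xi|_E^r$ and $q=p/r$, and both decompose $\int h\,N = M_T + A_T$ and observe that only $\E|M_T|^q$ needs work. From there the two arguments diverge.

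The paper proceeds by \emph{iteration}: for $r^{n-1}<p\leq r^n$ it applies~\eqref{burk-eq-1} repeatedly, each time peeling off one factor of $r$ from the exponent on $N$, ending with a telescoping sum $\sum_{j=1}^{n-1}\E(\int|\xi|^{r^j}\,\nu\,\d s)^{p/r^j}$; a final H\"older--Young interpolation then bounds each intermediate term by the two endpoint terms. Your argument avoids the induction entirely: in the range $q\leq 2$ a single application of Proposition~\ref{prop-main-1}(3) to the real-valued martingale $M$ suffices, while for $q>2$ you apply~\eqref{burk-eq-1} once (with $r=2$ in $\R$), then close the loop with the pathwise H\"older bound on the counting measure $N$ followed by Young and absorption. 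Your route is shorter and never produces intermediate exponents to interpolate away; the paper's route has the minor advantage of staying within the original martingale type~$r$ framework throughout, whereas you need to invoke the $r=2$ case in $\R$ (which is of course available). The localization you describe to justify the absorption step is standard and adequate; the paper avoids this issue only because its iterative scheme never has to absorb a term of the same form as the one being estimated.
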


	\begin{proof} Let us assume that the right-hand side of \eqref{Lp-1} is finite, as otherwise the inequality follows trivially.
To prove inequality \eqref{Lp-1}, we start with the case where $r<p\leq r^2$. According to the inequality \eqref{burk-eq-1}, by  taking into account the following equality
	      $$\int_0^t\int_Z|\xi(s,z)|^r_E\;N(\d s,\d z)=\int_0^t\int_Z|\xi(s,z)|_E^r\;\tilde{N}(\d s,\d z)+\int_0^t\int_Z|\xi(s,z)|_E^r\;\nu(\d z)\,\d s,$$ we obtain
	       \begin{align*}
	               \E\sup_{t\in[0,T]}\Big| \int_0^t\int_Z \xi(s,z)\; \tilde{N}(\d s,\d z) \Big|_E^{p}&\leq C_{p,r}\E\Big(\int_0^t\int_Z |\xi(s,z)|^r_E\;N(\d s,\d z)\Big)^{\frac{p}{r}}\\
	              &\leq C_{p,r}\E\Big(\int_0^t\int_Z |\xi(s,z)|_E^r\;\tilde{N}(\d s,\d z)\Big)^{\frac{p}{r}}+    C_{p,r}\E\Big(\int_0^t\int_Z |\xi(s,z)|^r_E\;\nu(\d z)\,\d s\Big)^{\frac{p}{r}}.
	       \end{align*}
	       Notice that $1<\frac{p}{r}\leq r$, hence by applying the inequality \eqref{Lr-1} to the first term on the right-hand side of the above inequality, we infer that
	       \begin{align*}
	              \E\sup_{t\in[0,T]}\Big| \int_0^t\int_Z \xi(s,z)\; \tilde{N}(\d s,\d z) \Big|_E^{p}
	                  \leq C_{p,r}\E\Big(\int_0^t\int_Z |\xi(s,z)|^p_E\;\nu(\d z)\,\d s\Big)+C_{p,r}\E\Big(\int_0^t\int_Z |\xi(s,z)|^r_E\;\nu(\d z)\,\d s\Big)^{\frac{p}{r}}.
	       \end{align*}
This completes the proof of  inequality \eqref{Lp-1} in  the case when $r<p\leq r^2$.

	       Let now assume that $r^2<p\leq r^3$.  Applying inequality \eqref{burk-eq-1} twice and then  inequality \eqref{Lr-1} yields  the following estimate:
	       \begin{align*}
	                     \E\sup_{t\in[0,T]}\Big| \int_0^t\int_Z \xi(s,z)\; \tilde{N}(\d s,\d z) \Big|_E^{p}&\leq C_{p,r}\E\Big(\int_0^t\int_Z |\xi(s,z)|^r_E\;N(\d s,\d z)\Big)^{\frac{p}{r}}\\
	              &\leq C_{p,r}\E\Big(\int_0^t\int_Z |\xi(s,z)|_E^r\;\tilde{N}(\d s,\d z)\Big)^{\frac{p}{r}}+C_{p,r}\E\Big(\int_0^t\int_Z |\xi(s,z)|^r_E\;\nu(\d z)\,\d s\Big)^{\frac{p}{r}}\\
	              &\leq C_{p,r}\E\Big(\int_0^t\int_Z |\xi(s,z)|^{r^2}_E\;N(\d s,\d z)\Big)^{\frac{p}{r^2}}+C_{p,r}\E\Big(\int_0^t\int_Z |\xi(s,z)|^r_E\;\nu(\d z)\,\d s\Big)^{\frac{p}{r}}\\
	              &\leq C_{p,r}\E\Big(\int_0^t\int_Z |\xi(s,z)|^{r^2}_E\;\tilde{N}(\d s,\d z)\Big)^{\frac{p}{r^2}}+C_{p,r}\E\Big(\int_0^t\int_Z |\xi(s,z)|^{r^2}_E\;\nu(\d z)\,\d s\Big)^{\frac{p}{r^2}}\\
	              &\hspace{1cm}+C_{p,r}\E\Big(\int_0^t\int_Z |\xi(s,z)|^r_E\;\nu(\d z)\,\d s\Big)^{\frac{p}{r}}\\
	              &\leq C_{p,r}\E\Big(\int_0^t\int_Z |\xi(s,z)|^p_E\;\nu(\d z)\,\d s\Big)+C_{p,r}\E\Big(\int_0^t\int_Z |\xi(s,z)|_E^{r^2}\;\nu(\d z)\,\d s\Big)^{\frac{p}{r^2}}\\
	              &\hspace{1cm}+C_{p,r}\E\Big(\int_0^t\int_Z |\xi(s,z)|^r_E\;\nu(\d z)\,\d s\Big)^{\frac{p}{r}}.
	       \end{align*}
  Suppose that  $r^{n-1}<p\leq r^n$ for some natural number $n  \geq 3$. Then  by induction we obtain the following estimate:
	                   \begin{align*}
	                               \E\sup_{t\in[0,T]}\Big| \int_0^t\int_Z \xi(s,z) \;\tilde{N}(\d s,\d z) \Big|^{p}\leq C_{p,r}\E\Big(\int_0^t\int_Z |\xi(s,z)|_E^p\;\nu(\d z)\,\d s\Big)+C_{p,r}\sum_{j=1}^{n-1}\E \Big(\int_0^T\int_Z|\xi(s,z)|^{r^j}_E\nu(\d z)\,\d s\Big)^{\frac{p}{r^j}}.
	                   \end{align*}
	                   Note that for $r\leq m\leq p$, by making use of H\"{o}lder's inequality and Young's inequality, we have
	                   \begin{align*}
	                         \Big(\int_0^T\int_Z|\xi(s,z)|^m_E\;\nu(\d z)\d r\Big)^{\frac{1}{m}}
	                         	                         &\leq \Big(\int_0^T\int_Z|\xi(s,z)|^r_E\;\nu(\d z)\d s\Big)^{\frac{p-m}{p-r}\cdot\frac{1}{m}} \Big(\int_0^T\int_Z|\xi(s,z)|^p_E\;\nu(\d z)\d s\Big)^{\frac{m-r}{p-r}\cdot\frac{1}{m}}\\
	                         &\leq \frac{r(p-m)}{m(p-r)} \Big(\int_0^T\int_Z|\xi(s,z)|^r_E\;\nu(\d z)\d s\Big)^{\frac{1}{r}}+\frac{p(m-r)}{m(p-r)} \Big(\int_0^T\int_Z|\xi(s,z)|^p_E\;\nu(\d z)\d s\Big)^{\frac{1}{p}}\\
	                         &\leq \Big(\int_0^T\int_Z|\xi(s,z)|^r_E\;\nu(\d z)\d s\Big)^{\frac{1}{r}}+\Big(\int_0^T\int_Z|\xi(s,z)|^p_E\;\nu(\d z)\d s\Big)^{\frac{1}{p}}.
	                   \end{align*}
	                         Hence, by combing the last two inequalities  we infer that
	                                      \begin{align*}
	                               \E\sup_{t\in[0,T]}\Big| \int_0^t\int_Z \xi(s,z)\; \tilde{N}(ds,dz) \Big|_E^{p}\leq C_{p,r}\E\Big(\int_0^T\int_Z |\xi(s,z)|^p_E\;\nu(\d z)\,\d s\Big)+C_{p,r}\E \Big(\int_0^T\int_Z|\xi(s,z)|^{r}_E\;\nu(\d z)\,\d s\Big)^{\frac{p}{r}}.
	                   \end{align*}
The proof is complete.
	\end{proof}
	\begin{rem}
	As a byproduct of  the proof of the above corollary, one can deduce  that for $r\leq p<\infty$, the following inequality holds:
		\begin{align}\label{rem-cor-eq-main}
			    \E\Big( \int_0^T\int_Z |\xi(s,z)|^r_E\; N(\d s,\d z)\Big)^{\frac{p}{r}}\leq C_{p,r}\E\Big(\int_0^T\int_Z |\xi(s,z)|^p_E\;\nu(\d z)\,\d s\Big)+C_{p,r}\E \Big(\int_0^T\int_Z|\xi(s,z)|^{r}_E\;\nu(\d z)\,\d s\Big)^{\frac{p}{r}}.
		\end{align}
	    For a comparison, let us recall Remark \ref{rem-lem-Kallenberg} which says for $p\geq r$, there holds the following inequality:
		\begin{align*}
	    	 \E\Big(\int_0^T\int_Z|\xi(s,z)|^r_E\;\nu(\d z)\,\d s\Big)^{\frac{p}{r}}\leq \Big(\frac{p}{r}\Big)^{\frac{p}{r}}\E\Big(\int_0^T\int_Z|\xi(s,z)|^r_E\;N(\d s,\d z)\Big)^\frac{p}{r}.
		\end{align*}
	  	\end{rem}

%--------------------------------------

With the help of two versions of the It\^{o} formula given  in Appendix B and Corollary \ref{coro-burk-1} we can now prove the following BDG inequality for L\'{e}vy-type processes in martingale type $2$ Banach spaces ($r=2$). Let $(W_t)_{t\geq0}$ be a cylindrical Brownian motion in a Hilbert space $H$ and let $\gamma(H;E)$ be the space of all $\gamma$-radonifying operators from $H$ to $E$. Suppose that $(g_t)_{t\geq0}$ is a progressively measurable process with values in $\gamma(H;E)$ such that
	\begin{align*}
		\E\Big(\int_0^T\|g_s\|^2_{\gamma(H;E)}\d s\Big)^{\frac{p}2}<\infty.
	\end{align*}
	We may denote by $M^p([0,T];\gamma(H;E))$ the space of such functions $(g_t)_{t\geq0}$. Let's recall some basic properties of stochastic integrals which we shall use in the proof; see \cite{[Dett],[Ondrejat]}.
	
	 \begin{enumerate}
\item For all $g \in M^2([0,T];\gamma(H,E))$ and $0\le u<t\le T$,
 \begin{align}\label{Iso-ine}
     \E\Big(\Big| \int_u^tg_s\,\d W_s\Big|_E ^2|\F_u\Big)\leq
C\E\Big(\int_u^t\n \xi_s\n ^2_{\gamma(H,E)}\, \d s\,|\F_u\Big).
 \end{align}
\item (Burkholder's inequality)
For all $0<p<\infty$ there exists a constant $C$, depending only on $p$ and $E$, such
that for all $g\in M^p([0,T];\gamma(H,E))$ and $t\in [0,T]$,
 \begin{align}\label{burk-ine}
    \E\sup_{s\in[0,t]}\Big| \int_0^sg_u\,\d W_u\Big|_E ^p\leq
C\E\Big(\int_0^t\n g_s\n ^2_{\gamma(H,E)}\,\d s\Big)^{\frac{p}{2}}.
 \end{align}
\end{enumerate}
%---------------------Theorem-----------------------
\begin{thm}\label{theo-Ito-2} Suppose that $r=2$, that is, $(E,|\cdot|_E)$ is a martingale type $2$ Banach space. Let $p\geq 2$ and let $(a_t)_{t\in[0,T]}$ be an $E$-valued progressively measurable process such that
$$\E\Big(\int_0^T|a_t|_E\,\d t\Big)^p<\infty$$
and let $(g_t)_{t\in[0,T]}$ be a process in $M^2([0,T];\gamma(H;E))$ and $\xi$ be a process in $\mathcal{M}_T^2(\mathcal{P}\otimes\mathcal{Z},\d t\times\mathbb{P}\times\nu;E)$.
 Let $X$ be a process given by
\begin{align}\label{Ito-process}
X_t=\int_0^t a_s\,\d s+\int_0^tg_s\,\d W_s+\int_0^t\int_Z \xi(s,z)\,\tilde{N}(\d s,\d z).
\end{align}
Then there exists a constant $C_p$, depending only on $p$ and $E$, such that
\begin{align}
       \E\sup_{0\leq t\leq T}|X_t|_E^p\leq & C_p\,\E\Big(\int_0^T|a_s|_E\;\d s\Big)^p+C_p\,\E\Big(\int_0^T\|g_s\|^2_{\gamma(H;E)}\d s\Big)^{\frac{p}2}\nonumber\\
       &+C_p\,\E \Big(\int_0^T\int_Z|\xi(s,z)|^{2}_E\;N(\d s,\d z)\Big)^{\frac{p}{2}}\quad \text{for }2\leq p<\infty,\label{burk-ineq-levy-1}
\end{align}
and
\begin{align}
       \E\sup_{0\leq t\leq T}|X_t|_E^p\leq & C_p\,\E\Big(\int_0^T|a_s|_E\;\d s\Big)^p+C_p\,\E\Big(\int_0^T\|g_s\|^2_{\gamma(H;E)}\d s\Big)^{\frac{p}2}+C_p\,\E\Big(\int_0^T\int_Z |\xi(s,z)|^p_E\;\nu(\d z)\,\d s\Big)\nonumber\\
		 &+C_p\,\E \Big(\int_0^T\int_Z|\xi(s,z)|^{2}_E\;\nu(\d z)\,\d s\Big)^{\frac{p}{2}} \quad\text{for }2\leq p<\infty.\label{burk-ineq-levy-2}
\end{align}
In particular, if  we make the additional assumption that
\begin{align}\label{app-Ito-2-assu}
\E N(t,Z)<\infty\text{ for }t\in(0,T]  \quad\text{ and}\quad\psi^\prime(X_s)(a_s)\leq 0 \quad\text{for all }(s,\omega)\in[0,T]\times\Omega,
\end{align}
then we have
\begin{align}
       \E\sup_{0\leq t\leq T}|X_t|_E^p\leq & C_p\,\E\Big(\int_0^T\|g_s\|^2_{\gamma(H;E)}\,\d s\Big)^{\frac{p}2}+C_p\,\E\Big(\int_0^T\int_Z |\xi(s,z)|^p_E\;\nu(\d z)\,\d s\Big)\nonumber\\
		& +C_p\,\E \Big(\int_0^T\int_Z|\xi(s,z)|^{2}_E\;\nu(\d z)\,\d s\Big)^{\frac{p}{2}}\quad \text{for }2\leq p<\infty.
\end{align}
\end{thm}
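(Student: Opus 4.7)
The first two inequalities \eqref{burk-ineq-levy-1} and \eqref{burk-ineq-levy-2} are obtained by splitting $X$ into its three components and applying already-established bounds. Using the triangle inequality together with $(a+b+c)^p\le 3^{p-1}(a^p+b^p+c^p)$, one finds
\begin{align*}
\E\sup_{0\le t\le T}|X_t|_E^p \le 3^{p-1}\Bigl\{&\E\Bigl(\int_0^T|a_s|_E\,\d s\Bigr)^p + \E\sup_{0\le t\le T}\Bigl|\int_0^tg_s\,\d W_s\Bigr|_E^p \\
&+ \E\sup_{0\le t\le T}\Bigl|\int_0^t\!\int_Z\xi(s,z)\,\tilde N(\d s,\d z)\Bigr|_E^p\Bigr\}.
\end{align*}
The Wiener term is controlled by the Brownian Burkholder inequality \eqref{burk-ine}, and the Poisson term is controlled by Theorem \ref{Th-burk} to obtain \eqref{burk-ineq-levy-1}, or by Corollary \ref{coro-burk-1} to obtain \eqref{burk-ineq-levy-2}.

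For the last inequality the direct triangle-inequality route is insufficient, since it would leave the unwanted drift contribution $\E(\int_0^T|a_s|_E\,\d s)^p$. Instead I would apply the It\^o formula \eqref{theo-Ito-3} from Appendix B to $X$ and to the function $\psi(x)=|x|_E^p$ (the extra hypothesis $\E N(t,Z)<\infty$ guarantees that this version of the formula is applicable and that only finitely many jumps occur in expectation on $[0,T]$). This produces a decomposition of $\psi(X_t)$ into a drift term $\int_0^t\psi'(X_{s-})(a_s)\,\d s$, a second-order Gaussian correction term (available because $E$ is martingale type $2$), a Wiener stochastic integral martingale, a compensated-Poisson martingale $\int_0^t\!\int_Z\psi'(X_{s-})(\xi)\,\tilde N(\d s,\d z)$, and a purely jump correction
\begin{align*}
J_t = \int_0^t\!\!\int_Z\bigl[\psi(X_{s-}+\xi(s,z))-\psi(X_{s-})-\psi'(X_{s-})(\xi(s,z))\bigr]\,N(\d s,\d z).
\end{align*}
By the hypothesis $\psi'(X_s)(a_s)\le 0$ the drift term is non-positive and may be dropped. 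Each remaining term is then handled as in the proof of Theorem \ref{Th-burk}: the Wiener martingale by \eqref{burk-ine} together with $\|\psi'(x)\|_{L(E,\R)}\le p|x|_E^{p-1}$, H\"older and Young; the Gaussian correction by the $2$-smoothness bound on the second-order remainder; the compensated-Poisson martingale by Proposition \ref{prop-main-1}(2) with $p=1$, $r=2$; and $J_t$ by the local Lipschitz estimate \eqref{Ap-rem-eq-2} for $\psi'$ (which for $r=2$ reads $\|\psi'(x)-\psi'(y)\|_{L(E,\R)}\le C_p(|x|_E+|y|_E)^{p-2}|x-y|_E$) combined with Corollary \ref{coro-burk-1}. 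A small factor $\varepsilon$ in front of every $\E\sup_{t\le T}|X_t|_E^p$ that arises allows absorption to the left-hand side, and a localisation $\sigma_n=\inf\{t:|X_t|_E\ge n\}$ is used beforehand to guarantee a priori integrability.

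The principal obstacle is that on a general martingale type $2$ Banach space $\psi=\psi_p$ need not be twice Fr\'echet differentiable, so the ``Gaussian correction'' arising in the It\^o formula cannot be written as a classical trace term; one must rely on the precise form supplied by the Appendix B version of the formula, whose second-order remainder is nonetheless controlled by $|g_s|^2_{\gamma(H,E)}$ times a power of $|X_{s-}|_E$ via the $r=2$ smoothness. A secondary, but essential, technical point is that the jump correction $J_t$ must be estimated by the sharp Corollary \ref{coro-burk-1} rather than a crude bound, in order to recover exactly the three-term right-hand side of the target inequality.
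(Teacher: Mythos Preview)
Your treatment of \eqref{burk-ineq-levy-1} and \eqref{burk-ineq-levy-2} by a direct triangle-inequality split is correct and is genuinely simpler than what the paper does: the paper runs the full It\^o-formula machinery for these two inequalities as well, deriving a decomposition of $\psi(X_t^n)$ into five pieces $J_1,\dots,J_5$ and estimating each one. Your shortcut works because the three summands in $X_t$ are already stochastic integrals for which the needed bounds (\eqref{burk-ine}, Theorem~\ref{Th-burk}, Corollary~\ref{coro-burk-1}) have been established separately; the paper's longer route has the advantage that it treats all three conclusions uniformly and, more importantly, yields the last inequality as an immediate by-product.

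For the last inequality your overall strategy is right, but there is a real gap in the It\^o-formula step. You cite Theorem~\ref{theo-Ito-3}, yet that result is stated only for processes \emph{without} a Wiener component, so it cannot be applied to the present $X$. Theorem~\ref{theo-Ito-1} does include the Wiener part but requires $\phi\in C^2$, which $\psi_p=|\cdot|_E^p$ need not satisfy on a general martingale type~$2$ space---the very difficulty you flag at the end. Neither Appendix~B formula therefore applies off the shelf. The paper resolves this by building an ad hoc It\^o-type decomposition inside the proof: it truncates to $D_n$ so that there are finitely many jumps, interlaces at the jump times $\tau_m$, and on each jump-free interval applies the result of van Neerven--Zhu \cite{[Neerven+Zhu]} (valid for $\psi_p$ with merely Lipschitz derivative) to the continuous auxiliary process $Y^{n,m}$. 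This produces a genuine remainder term $J_5=\sum_m\lim_j\sum_i R(Y^{n,m}_{t\wedge\tau_m\wedge t_i^j},Y^{n,m}_{t\wedge\tau_m\wedge t_{i+1}^j})$ in place of a classical trace, and its estimation via \eqref{Iso-ine}, \eqref{burk-ine} and the bound $|R(x,y)|\le C_p(|x|_E^{p-2}|x-y|_E^2+|x-y|_E^p)$ is the technical heart of the argument. Your sketch names the right ingredients but does not supply this interlacing/remainder step, which is precisely where the work lies.
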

%-----------------Proof-------------------------------------

\begin{proof}[Proof of Theorem \ref{theo-Ito-2}]
Since the Poisson point process $\pi$ is $\sigma$-finite, there exists a sequence of sets $\{D_n\}_{n\in\mathbb{N}}$ such that $D_n\subset D_{n+1}$, $\cup_{n\in\mathbb{N}}D_n=Z$ and $\E N(T,D_n)<\infty$ for $n\in\mathbb{N}$. So there is contained only a finite number of jumps of $\pi$ in $D_n$ and at most a countably many number of jumps in $Z$. We shall denote by $\{\tau_m\}_{m=1}^{\infty}$ the corresponding sequence of jump times of  $\pi$ until time $T$ in $Z$.  In this way we order the set $\{s \in\mathcal{D}(\pi)\cap (0,T]: \pi(s)\in Z \}$ according to magnitude by $0<\tau_1<\tau_2<\cdots<\tau_m<\cdots\leq T$.  See more details in Appendix B.
Define a sequence $\{\xi^n\}_{n\in\N}$ of functions by
       \begin{align*}
           \xi^n(s,\omega,z):=\xi(s,\omega,z)1_{D_n}(z),\ (s,\omega,z)\in\mathbb{R}_+\times\Omega\times Z,\  n\in\N,
       \end{align*}
          and a sequence $\{X^n\}_{n\in\N}$ of process $X^n:=(X^n_t)_{t\geq0}$ by
             \begin{align*}
                  X^n_t=\int_0^ta_s\d s+\int_0^tg_s\,\d W_s+\int_0^t\int_Z \xi^n(s,z)\,\tilde{N}(\d s,\d z),\ t\geq0, \ n\in\mathbb{N}.
             \end{align*}
Similar to the proof of Theorem \ref{theo-Ito-1}, we have
    \begin{align}
         \psi(X_t^n)-\psi(X_0)
         &=\sum_{m=1}^{\infty}\Big{[} \psi(X^n_{t\wedge\tau_{m}})-\psi(X^n_{t\wedge\tau_{m}-}) \Big{]}+\sum_{m=1}^{\infty}\Big{[} \psi(X^n_{t\wedge\tau_{m}-})-\psi(X^n_{t\wedge\tau_{m-1}}) \Big{]}\nonumber\\
         &=: I_1+I_2.\label{app-burk-split}
    \end{align}
   Here as usual we set $\tau_0=0$.
    A similar argument as in the proof of Theorem \ref{theo-Ito-1} gives
    \begin{align}
           I_1&=\int_0^t\int_Z\Big{[} \psi(X^n_{s-}+\xi^n(s,z,\omega)) -\psi(X^n_{s-})    \Big{]}\tilde{N}(\d s,\d z)\nonumber\\
                   & +\int_0^t\int_Z\Big{[}\psi(X^n_{s-}+\xi^n(s,z,\omega)) -\psi(X^n_{s-})    \Big{]}\nu(\d z)\d s.\label{app-b-coro-eq1}
                   \end{align}
                   To approximate $I_2$, let $\Pi_j=\{0=t_0^j<t_1^j<\cdots<t_{l(j)}^j=T\}$, $j\in\mathbb{N}$ be any sequence of partitions of the interval $[0,T]$ whose meshes $\|\Pi_j\|:=\max_{0\leq i\leq l(j)-1 }|t_{i+1}^j-t_i^j|$ tend to $0$ as $j\rightarrow\infty$.   Set for $u\in[0,T]$,
		                 \begin{align}
			Y^{n,m}_t
			  =X^n_{\tau_{m-1}}+\int_0^t1_{[\tau_{m-1},T]}\Big[a(s)-\int_{Z}\xi^n(s,z)\,\nu(\d z)\Big]\d s+\int_0^t
			 1_{[\tau_{m-1},T]} g_s\,\d W_s.   	\label{def-Y_n}
		                 \end{align}
		                 Since the process $X^n$ is continuous in the random time interval $(t\wedge\tau_{m-1},t\wedge\tau_m)$, we have $X_t^n=Y_t^{n,m}$ for $t\in[\tau_{m-1},\tau_m)$, $m=1,2,\dots$.
Since the function $\psi(x)=|x|_E^p$, $p\geq 2$, is Fr\'{e}chet differentiable and its first derivative is Lipschitz continuous, we can apply Theorem 3.1 in \cite{[Neerven+Zhu]} to the process $Y^{n,m}$ and get
\begin{align*}
\psi(Y^{n,m}_{t\wedge\tau_m})-\psi(Y^{n,m}_{\tau_{m-1}})=&\int_{\tau_{m-1}}^{t\wedge\tau_m}\psi^\prime(Y^{n,m}_s)(a_s)\,\d s-\int_{\tau_{m-1}}^{t\wedge\tau_m}\int_Z\psi^\prime(Y^{n,m}_s)(\xi^n(s,z))\,\nu(\d z)\d s\\
&+\int_{\tau_{m-1}}^{t\wedge\tau_m}\psi^\prime(Y^{n,m}_s)(g_s)\,\d W_s
						+\lim_{j\rightarrow\infty}\sum_{i=0}^{l(j)-1} R(Y^{n,m}_{t\wedge\tau_m\wedge t_{i}^j},Y^{n,m}_{t\wedge\tau_m\wedge t_{i+1}^j}).
\end{align*}
Here $R(x,y)=\int_0^1(\psi'(x+\theta(y-x))(y-x)-\psi'(x)(y-x))\,\d \theta$.
Recall that  $Y^{n,m}_{t}=X^n_{\tau_{m-1}}$, for $t\in[0,\tau_{m-1}]$; $Y_{t}^{n,m}=X_t^n$ for $t\in[\tau_{m-1},\tau_m)$ and $Y_{t\wedge\tau_m}^{n,m}=Y_{\tau_{m}}^{n,m}=X^n_{\tau_m-}=X^n_{t\wedge\tau_m-}$,  for $t\geq\tau_m$ and also $\psi(Y^{n,m}_{t\wedge\tau_{m}})-\psi(Y^{n,m}_{\tau_{m-1}})=\psi(X^n_{t\wedge\tau_{m}-})-\psi(X^n_{t\wedge\tau_{m-1}})$.  Now we have
			\begin{align}
I_2=\sum_m\psi(X^{n}_{t\wedge\tau_m-})-\psi(X^{n}_{t\wedge\tau_{m-1}})=&\int_{0}^{t}\psi^\prime(X^{n}_s)(a_s)\,\d s-\int_{0}^{t}\int_Z\psi^\prime(X^{n}_s)(\xi^n(s,z))\,\nu(\d z)\d s\label{app-b-coro-eq2}\\
&+\int_{0}^{t}\psi^\prime(X^{n}_s)(g_s)\,\d W_s
					+\sum_m\lim_{j\rightarrow\infty}\sum_{i=0}^{l(j)-1} R(Y^{n,m}_{t\wedge\tau_m\wedge t_{i}^j},Y^{n,m}_{t\wedge\tau_m\wedge t_{i+1}^j}).\nonumber
\end{align}
	Using \eqref{app-b-coro-eq1} and \eqref{app-b-coro-eq2} in \eqref{app-burk-split} gives
	  \begin{align}
         \psi(X_t^n)-\psi(X_0)=&\int_{0}^{t}\psi^\prime(X^{n}_s)(a_s)\,\d s+\int_{0}^{t}\psi^\prime(X^{n}_s)(g_s)\,\d W_s
         +\int_0^t\int_Z \psi^\prime(X^n_{s-})(\xi^n(s,z,\omega))\, \tilde{N}(\d s,\d z)\nonumber\\
                   & +\int_0^t\int_Z\Big{[} \psi(X^n_{s-}+\xi^n(s,z,\omega)) -\psi(X^n_{s-}) -\psi^\prime(X^n_{s-}) \xi^n(s,z)   \Big{]}\,N(\d s,\d z)\label{app-b-coro-eq-m-1}\\
						&+R(Y),\nonumber
	\end{align}
	where $R(Y)=\sum_m\lim_{j\rightarrow\infty}\sum_{i=0}^{l(j)-1} R\left(Y^{n,m}_{t\wedge\tau_m\wedge t_{i}^j}, Y^{n,m}_{t\wedge\tau_m\wedge t_{i+1}^j}\right).$
	Now  taking first the supreme then the expectation to both sides of the equality \eqref{app-b-coro-eq-m-1} gives that
	                      \begin{align}\label{app-eq-45}
		                   \E\sup_{t\in[0,T]}\psi(X_t)\leq&\E\sup_{t\in[0,T]}\Big|\int_0^t\psi^\prime(X_s^n)(a_s)\,\d s\Big|+\E\sup_{t\in[0,T]}\Big|\int_0^t\psi^\prime(X_s)(g_s)\,\d W_s\Big|\nonumber\\
		&+\E\sup_{t\in[0,T]}\Big|\int_0^t\int_Z \psi^\prime(X^n_{s-})(\xi^n(s,z,\omega)) \tilde{N}(\d s,\d z)\Big|\nonumber\\
		&
		                    +\E\sup_{t\in[0,T]}\Big|\int_0^t\int_Z\Big{[} \psi(X^n_{s-}+\xi^n(s,z,\omega)) -\psi(X^n_{s-}) -\psi^\prime(X^n_{s-}) \xi^n(s,z)   \Big{]}N(\d s,\d z)\Big|\nonumber\\
		                    &+\E\sup_{0\leq t\leq T} R(Y)\nonumber\\
		                    &:=J_1+J_2+J_3+J_4+J_5.
		                  \end{align}
		                  It's worth mentioning here that if we impose the additional conditions  \eqref{app-Ito-2-assu}, i.e. $\E N(t,Z)<\infty$ and $\psi^\prime(X_s^n)(a_s)\leq 0$, then $X^n=X$ and the first term on the right hand side of  \eqref{app-b-coro-eq-m-1} satisfies $\int_{0}^{t}\psi^\prime(X^{n}_s)(a_s)\leq 0$. In this case we do not need to consider $J_1$.	
		
		              Now let $\varepsilon>0$.  By Young's inequality  we have
		               		                  \begin{align*}
		                  J_1\leq \varepsilon \E\sup_{0\leq t\leq T}|X_s^n|_E^p+C_{p,\frac{1}{\varepsilon^p}}\E\Big(\int_0^T|a_s|_E\,\d s\Big)^p.
		                  \end{align*}

		            For the term $J_2$, according to \cite[Lemma 3.1]{[Neerven+Zhu]}, the process $t\mapsto \psi^\prime(X^n_t)( g_t) $ is progressively measurable and satisfies $\E\Big(\int_0^T\| \psi^\prime(X^n_t)( g_t)\|^2_H\d t\Big)^{\frac12}<\infty $. Then we can apply Burkholder's inequality \eqref{burk-ine}, H\"{o}lder's inequality, and Young's inequality to obtain, for any $\varepsilon>0$,
			    \begin{align}
			          J_2&\leq C\, \Big(\int_0^T\|\psi^\prime(X^n_s)( g_s)\|^2_H \,\d s\Big)^{\frac12}\nonumber\\
			          &\leq C_p\,\E\Big(\int_0^T |X^n_s|_E^{2(p-1)}\|g_s\|^2_{\gamma(H;E)}\,\d s\Big)^{\frac{1}{2}}\nonumber\\
			          &\leq C_p\,\E\sup_{t\in[0,T]}|X_t^n|_E^{p-1}\Big(\int_0^T \|g_s\|^2_{\gamma(H;E)}\,\d s\Big)^{\frac{1}{2}}\\
		&\leq C_p\varepsilon\,\E\sup_{t\in[0,T]}|X^n_t|_E^{p}+C_{p,\frac{1}{\varepsilon}}\E\Big(\int_0^T\|g_s\|_{\gamma(H;E)}^2\,\d s\Big)^{\frac{p}{2}}.\nonumber
			    \end{align}
			    %----------
			    	 %-----------
	By adopting the same procedure as in the proof of Theorem \ref{Th-burk} (with  $r=2$), we get
	\begin{align*}
	J_3&\leq C_{p}\,\varepsilon\, \E\sup_{t\in[0,T]}|X_t^n|^p_E+C_{p,\frac{1}{\varepsilon}}
	\E \Big(\int_0^{T}\int_Z|\xi(s,z)|_E^2 \,N(\d s,\d z)\Big)^{\frac{p}{2}},\\
	J_4&\leq C_{p}\,\varepsilon\, \E\sup_{t\in[0,T]}|X_t^n|^p_E+C_{p,\frac{1}{\varepsilon}}
	\E \Big(\int_0^{T}\int_Z|\xi(s,z)|_E^2 \,N(\d s,\d z)\Big)^{\frac{p}{2}}.
	\end{align*}
	The estimate of $J_5$ is similar to that in \cite[Theorem 1.2]{[Neerven+Zhu]}. By applying Lemma 3.5 in \cite{[Neerven+Zhu]}, we have
	 \begin{align*}
		J_5&\leq \E\sup_{0\leq t\leq T}\sum_m\liminf_{j\rightarrow\infty}\sum_{i=0}^{l(j)-1} |R(Y^{n,m}_{t\wedge\tau_m\wedge t_{i}^j},Y^{n,m}_{t\wedge\tau_m\wedge t_{i+1}^j})|\\
		&\leq \sum_m\E\liminf_{j\rightarrow\infty}\sum_{i=0}^{l(j)-1} |R(Y^{n,m}_{\tau_m\wedge t_{i}^j},Y^{n,m}_{\tau_m\wedge t_{i+1}^j})|.
	\end{align*}
	Since $R(x,y)=\int_0^1(\psi'(x+\theta (y-x))(y-x)-\psi'(x)(y-x))\,\d \theta$, by \eqref{Ap-rem-eq-2} we infer
	\begin{align*}
	|R(x,y)|\leq C_p\int_0^1(|x+\theta(y-x)|_E+|x|_E)^{p-2}|x-y|_E\,\d\theta\leq C_p|x|_E^{p-2}|x-y|_E^2+C_p|x-y|^p_E.
	\end{align*}
	It follows that
	\begin{align}
	|R(Y^{n,m}_{\tau_m\wedge t_{i}^j},Y^{n,m}_{\tau_m\wedge t_{i+1}^j})|\leq C_p|Y^{n,m}_{\tau_m\wedge t_{i}^j}|_E^{p-2}|Y^{n,m}_{\tau_m\wedge t_{i+1}^j}-Y^{n,m}_{\tau_m\wedge t_{i}^j}|_E^2+C_p|Y^{n,m}_{\tau_m\wedge t_{i+1}^j}-Y^{n,m}_{\tau_m\wedge t_{i}^j}|^p_E.\label{esti-R}
	\end{align}
	%------------------------------
	For the first term, by \eqref{def-Y_n}, we obtain
\begin{align*}
\ & \sum_{i=0}^{l(j)-1}| Y^{n,m}_{\tau_m\wedge t_{i}^j}|_E ^{p-2} | Y^{n,m}_{\tau_m\wedge t_{i+1}^j}-Y^{n,m}_{\tau_m\wedge t_{i}^j}|_E ^2 \\
& \leq
 2\sum_{i=0}^{l(j)-1} | Y^{n,m}_{\tau_m\wedge t_i^j}|_E ^{p-2} \Big|  \int_{  t_i^j}^{ t_{i+1}^j}1_{[\tau_{m-1},\tau_m]}\big[a_s-\int_Z\xi^n(s,z)\,\nu(\d z)\big]\d s   \Big|_E ^2
\\
& \qquad+2\sum_{i=0}^{l(j)-1}| Y^{n,m}_{\tau_m\wedge t_i^j}|_E ^{p-2} \Big|  \int_{ t_i^j}^{ t_{i+1}^j}1_{[\tau_{m-1},\tau_m]}g_s
\,\d W_s\Big|_E ^2 =:K_1^{j}+K_2^j.
\end{align*}
   For the term $K_1^j$, it's easy to see
     \begin{align*}
                \lim_{j\rightarrow\infty}K_1^j=0,\
\mathbb{P}\text{-a.s.}
                \end{align*}
To estimate $K_2^j$, by using the It\^{o} isometry property \eqref{Iso-ine} and Young's inequality with $\e>0$, we infer
\begin{align*}
 \sum_m\E\, \liminf_j K_2^j &\leq \sum_m\liminf_j \, \E K_2^j \\
&=\sum_m\liminf_j\, \E\sum_{i=0}^{l(j)-1}| Y^{n,m}_{\tau_m\wedge t_i^j}|_E ^{p-2} \Big|  \int_{ t_i^j}^{ t_{i+1}^j}1_{[\tau_{m-1},\tau_m]}g_s
\,\d W_s\Big|_E ^2\\
 &=\sum_m \liminf_j\sum_{i=0}^{l(j)-1}\E\Big(| Y^{n,m}_{\tau_m\wedge t_i^j}|_E ^{p-2}\E\Big(\Big| \int_{t_i^j\wedge t}^{t_{i+1}^j}1_{[\tau_{m-1},\tau_m]}g_s
\,\d W_s\Big|_E^2\big|\F_{t_i^j}\Big)\Big)\\
 &\leq \sum_m
C\liminf_j\sum_{i=0}^{l(j)-1}\E\Big(| Y^{n,m}_{\tau_m\wedge t_i^j}|_E ^{p-2}\E\Big(\int_{t_i^j}^{t_{i+1}^j}1_{[\tau_{m-1},\tau_m]}\n g_s\n ^2_{\gamma(H,E)}
\,\d s\big|\F_{t_i^j}\Big)\Big)
\end{align*}
\begin{align*}
 &\leq \sum_m
C\liminf_j\sum_{i=0}^{l(j)-1}\E\Big(| Y^{n,m}_{\tau_m\wedge t_i^j}|_E ^{p-2}\Big(\int_{t_i^j}^{t_{i+1}^j}1_{[\tau_{m-1},\tau_m]}\n g_s\n ^2_{\gamma(H,E)}
\,\d s\Big)\Big)\\
   &\leq
C\sum_m \liminf_j\E\Big(\sup_{s\in[0,T]}| Y^{n,m}_{s\wedge\tau_m}|_E ^{p-2}\sum_{i=0}^{{l(j)-1}}\int_{
t_i^j}^{t_{i+1}^j}\n 1_{[\tau_{m-1},\tau_m]}g_s\n ^2_{\gamma(H,E)} \,\d s\Big)\\
&\leq
C\sum_m\E\Big(\sup_{s\in[0,T]}| X^{n}_{s\wedge\tau_m}|_E ^{p-2}\int_{
0}^{T}\n 1_{[\tau_{m-1},\tau_m]}g_s\n ^2_{\gamma(H,E)} \,\d s\Big)\\
&\leq
C\E\Big(\sup_{s\in[0,T]}| X^{n}_{s}|_E ^{p-2}\int_{
0}^{ T}\n g_s\n ^2_{\gamma(H,E)} \,\d s\Big)\\
& \le
C_p\varepsilon\,\E\Big(\sup_{s\in[0,T]}| X^n_s|_E ^{p}\Big)
+ C_{p,\frac1{\varepsilon}}\,\E  \Big(\int_{0}^{T}\n g_s\n ^2_{\gamma(H,E)}
\,\d s\Big)^{\frac{p}{2}},
\end{align*}
where we also used the fact that $\sup_{s\in[0,T]}|Y^{n,m}_{s\wedge\tau_{m}}|^{p-2}\leq \sup_{s\in[0,T]}|X_s^n|_E^{p-2}$.

Next we estimate the second term in \eqref{esti-R}. Observe that
\begin{align*}
\sum_{i=0}^{l(j)-1}|Y^{n,m}_{\tau_m\wedge t_{i+1}^j}-Y^{n,m}_{\tau_m\wedge t_{i}^j}|^p_E
&\leq  C\sum_{i=0}^{l(j)-1}  \Big|  \int_{  t_i^j}^{ t_{i+1}^j}1_{[\tau_{m-1},\tau_m]}\big[a_s-\int_Z\xi^n(s,z)\,\nu(\d z)\big]\d s   \Big|_E ^p
\\
& \qquad+C\sum_{i=0}^{l(j)-1} \Big|  \int_{ t_i^j}^{ t_{i+1}^j}1_{[\tau_{m-1},\tau_m]}g_s
\,\d W_s\Big|_E ^p=: K_3^j+K_4^j.
\end{align*}
Clearly,
\begin{align*}
 \lim_{j\rightarrow\infty} K_3^j&\leq
C\lim_{j\rightarrow\infty}\sup_{0\leq i\leq l(j)-1}\Big|  \int_{  t_i^j}^{ t_{i+1}^j}1_{[\tau_{m-1},\tau_m]}\big[a_s-\int_Z\xi^n(s,z)\nu(\d z)\big]\,\d s   \Big|_E^{p-1}\cdot
  \int_{0}^{ T}\Big|a_s-\int_Z\xi^n(s,z)\,\nu(\d z)\Big|_E\d s  \\
  & =0.
\end{align*}
  Moreover, by Burkholder's inequality \eqref{burk-ine},
  \begin{align*}
 \sum_m\E\, \liminf_j K_4^j \leq \sum_m\liminf_j \, \E K_4^j
&=\sum_m\liminf_j\, \E\sum_{i=0}^{l(j)-1} \Big|  \int_{ t_i^j}^{ t_{i+1}^j}1_{[\tau_{m-1},\tau_m]}g_s
\,\d W_s\Big|_E ^p\\
 &\leq \sum_m
C\liminf_j\sum_{i=0}^{l(j)-1}\E\Big(\int_{t_i^j}^{t_{i+1}^j}1_{[\tau_{m-1},\tau_m]}\n g_s\n ^2_{\gamma(H,E)}
\,\d s\Big)^{\frac{p}2}\\
   &\leq
C\,\sum_m \liminf_j\E\Big(\sum_{i=0}^{{l(j)-1}}\int_{
t_i^j}^{t_{i+1}^j}\n 1_{[\tau_{m-1},\tau_m]}g_s\n ^2_{\gamma(H,E)} \,\d s\Big)^{\frac{p}2}\\
&\leq
C\sum_m\E\Big(\int_{
0}^{T}\n 1_{[\tau_{m-1},\tau_m]}g_s\n ^2_{\gamma(H,E)} \,\d s\Big)^{\frac{p}{2}}\\
& \le
 C\,\E  \Big(\int_{0}^{T}\n g_s\n ^2_{\gamma(H,E)}
\,\d s\Big)^{\frac{p}{2}}.
\end{align*}
Collecting all the estimates, for any $\e>0$
we obtain
	 \begin{align*}
		J_5		
		\leq\varepsilon C_p\,\E\sup_{s\in[0,T]}|X^n_s|_E^p+C_{p,\frac{1}{\varepsilon}}\,\E\Big(\int_{0}^{T}\|g_s\|_{\gamma(H,E)}^2\,\d s\Big)^{\frac{p}{2}}.
	\end{align*}	
				Finally, we can collect together all terms and choose a suitable value of $\varepsilon$ to get
	%-----------------------
	\begin{align}
			 \E\sup_{t\in[0,T]}|X^n_t|_E^p\leq &C_p\E\Big(\int_0^T|a_s|_E\,\d s\Big)^p+ C_{p}\E\Big(\int_0^T\|g_s\|^2_{\gamma(H;E)}\,\d s\Big)^{\frac{p}{2}}\nonumber\\
			 	&+C_p\E \Big(\int_0^{T}\int_Z|\xi(s,z)|_E^2 \,N(\d s,\d z)\Big)^{\frac{p}{2}}. \label{theo-levy-burk-proof-1}
	\end{align}
	By applying \eqref{Lp-1} with $r=2$, we have
	\begin{align*}
			 \E\sup_{t\in[0,T]}|X_t-X^n_t|_E^p			 \leq &C_p\E\Big(\int_0^T\int_Z|\xi^n(s,z)-\xi(s,z)|_E^p\,\nu(\d z)\d s)\Big)\\
			 &+ C_p\E\Big(\int_0^T\int_Z|\xi^n(s,z)-\xi(s,z)|_E^2\,\nu(\d z)\d s)\Big)^{\frac{p}{2}}\rightarrow0,\text{ as }n\rightarrow\infty,
			 		\end{align*}
	and inequality \eqref{burk-ineq-levy-1} follows.
	Inequality \eqref{burk-ineq-levy-2}  follows immediately from \eqref{burk-ineq-levy-1} and \eqref{rem-cor-eq-main}.
\end{proof}

	%--------------------------------------------
%\vfill\newpage
	\section{\bf{Maximal inequalities and exponential estimates for stochastic convolutions w.r.t. L\'{e}vy processes}}\label{sec-max-ineq}
		\subsection{Maximal $L^p$ estimates for stochastic convolutions w.r.t. Poisson random measures }
		 Suppose that $(E,|\cdot|_E)$ is a martingale type $r$ Banach space, $1<r\leq 2$. Here, as before, $|\cdot|_E$ is the equivalent $r$-smooth norm. Let $(e^{tA})_{t\geq0}$ be a $C_0$-semigroup on $E$, with the generator denoted by $A$,  of contraction type with respect to the $r$-smooth norm $|\cdot|_E$ such that $\|e^{tA}\|\leq e^{t\alpha} $  for some $\alpha \geq 0$. We start by considering the following stochastic convolution driven by purely discontinuous noises:
	\begin{align}
	         X_t=\int_0^t\int_Z e^{(t-s)A}\xi(s,z)\,\tilde{N}(\d s,\d z).
	\end{align}
	
		\begin{thm}\label{Th-MI-mian-1} Let $(e^{tA})_{t\geq0}$ be a $C_0$-contraction semigroup on a martingale type $r$ Banach space $(E,|\cdot|_E)$, $1<r\leq 2$,
		and let $\xi\in\mathcal{M}_T^r(\mathcal{P}\otimes\mathcal{Z},\d t\times\mathbb{P}\times\nu;E)$. Then there exists a c\`{a}dl\`{a}g modification $\bar{X}$ of $X$ such that for $r\leq p<\infty$,
		\begin{align}\label{max-eq-1-1}
			        \E\sup_{0\leq t\leq T} \big|\bar{X}_t\big|_E^p\leq e^{\alpha p T}C_{p,r}\,\E\Big(\int_0^T\int_Z |\xi(s,z)|^r_E\, N(\d s,\d z)\Big)^{\frac{p}{r}}.
		\end{align}

	\end{thm}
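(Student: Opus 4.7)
The plan is to extend the It\^o-formula strategy from the proof of Theorem \ref{Th-burk}, which handles the purely jump martingale $u_t$, to the mild stochastic convolution $X_t$ by Yosida-approximating the generator $A$. After a rescaling that reduces everything to the strict contraction case, the approximating processes become genuine c\`adl\`ag semimartingales to which the It\^o formula of Appendix B applies directly, and the argument then parallels the proof of Theorem \ref{Th-burk} line by line.

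First I would eliminate the factor $e^{\alpha pT}$ by rescaling. The semigroup $S(t):=e^{-\alpha t}e^{tA}$ is a contraction $C_0$-semigroup on $(E,|\cdot|_E)$, and $\tilde X_t:=e^{-\alpha t}X_t$ is the stochastic convolution of $S$ against the integrand $e^{-\alpha s}\xi(s,z)$; since $\sup_{t\le T}|X_t|_E^p\le e^{\alpha pT}\sup_{t\le T}|\tilde X_t|_E^p$ and $e^{-\alpha s}\le 1$, it suffices to prove the inequality in the strict contraction case $\alpha=0$.

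In that case, let $A_n:=nA(nI-A)^{-1}$ denote the Yosida approximation. Then $A_n\in L(E)$, the semigroup $(e^{tA_n})_{t\ge 0}$ is contractive on $(E,|\cdot|_E)$, and $e^{tA_n}x\to e^{tA}x$ pointwise on $E$. The approximate convolution
\begin{align*}
X_t^n:=\int_0^t\int_Z e^{(t-s)A_n}\xi(s,z)\,\tilde N(\d s,\d z)
\end{align*}
is, by a stochastic Fubini argument that uses the boundedness of $A_n$, a c\`adl\`ag semimartingale satisfying
\begin{align*}
X_t^n=\int_0^t A_nX_s^n\,\d s+\int_0^t\int_Z\xi(s,z)\,\tilde N(\d s,\d z).
\end{align*}
Applying the It\^o formula of Appendix B to $\psi_p(x)=|x|_E^p$ produces
\begin{align*}
\psi_p(X_t^n)=\int_0^t\psi_p'(X_s^n)(A_nX_s^n)\,\d s+I_1^n(t)+I_2^n(t),
\end{align*}
where $I_1^n$ and $I_2^n$ are structurally identical to the two integrals in the proof of Theorem \ref{Th-burk}. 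Because $e^{tA_n}$ is contractive and $\psi_p$ is Fr\'echet differentiable, differentiating $t\mapsto\psi_p(e^{tA_n}x)$ at $t=0$ gives $\psi_p'(x)(A_nx)\le 0$, so the drift term is nonpositive and may be discarded. The remaining two integrals are then handled exactly as in \eqref{burk-ineq-I_1} and \eqref{sec-1-eq-21}: for $I_1^n$ via Proposition \ref{prop-main-1}(2) with $p=1$ followed by H\"older and Young, and for $I_2^n$ via the mean value theorem, the local $(r-1)$-H\"older bound \eqref{Ap-rem-eq-2}, and Young. Choosing $\varepsilon>0$ small enough to absorb $\varepsilon\,\E\sup_{t\le T}|X_t^n|_E^p$ on the left-hand side then yields the uniform-in-$n$ estimate
\begin{align*}
\E\sup_{t\in[0,T]}|X_t^n|_E^p\le C_{p,r}\,\E\Bigl(\int_0^T\int_Z|\xi(s,z)|_E^r\,N(\d s,\d z)\Bigr)^{\frac{p}{r}}.
\end{align*}

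The step I expect to be the main obstacle is the passage $n\to\infty$: dominated convergence with the pointwise bound $|e^{(t-s)A_n}\xi(s,z)|_E\le|\xi(s,z)|_E$ immediately gives $X_t^n\to X_t$ in $L^r$ for each fixed $t$, but to extract a c\`adl\`ag modification $\bar X$ of $X$ and to bring the $\sup_{t\le T}$ inside the expectation via Fatou one needs convergence of $X^n$ in the sense of c\`adl\`ag paths. I would obtain this by first approximating $\xi$ itself by step processes for which the associated convolution is explicitly c\`adl\`ag, and then applying the uniform estimate above to the corresponding differences to conclude that the approximants are Cauchy in $L^p(\Omega;D([0,T];E))$; the limit provides the required c\`adl\`ag modification $\bar X$, and Fatou together with the uniform bound delivers the final inequality.
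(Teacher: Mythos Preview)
Your strategy matches the paper's almost exactly: reduce to the strict contraction case by rescaling, pass to a strong-solution approximation, apply the It\^o formula of Appendix~B to $\psi_p$, drop the drift via dissipativity, and then repeat the $I_1,I_2$ estimates of Theorem~\ref{Th-burk}. The one genuine difference is \emph{which} object you regularise. You approximate the generator by the Yosida operators $A_n$ and keep $\xi$ fixed; the paper instead keeps $e^{tA}$ and regularises the integrand via $\xi^n:=nR(n,A)\xi$, so that $\xi^n$ takes values in $\mathcal{D}(A)$ and $X^n$ is a strong solution of \eqref{SPDE} with the \emph{original} semigroup. This makes the limit step you flag as ``the main obstacle'' essentially trivial: $X^n-X^m$ is then again a stochastic convolution of the \emph{same} semigroup against the $\mathcal{D}(A)$-valued integrand $\xi^n-\xi^m$, so Step~1 applies verbatim to the difference and gives the Cauchy property in $L^p(\Omega;D([0,T];E))$ directly. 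In your version $X^n-X^m$ is \emph{not} a convolution against a single semigroup, which is why you are forced into the secondary step-process approximation; that detour can be made rigorous along the lines you sketch, but it is unnecessary once one regularises $\xi$ rather than $A$.
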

	 %---------
	\begin{proof}
		\textbf{Step 1}  First fix $p\geq r$ and suppose that $\xi\in
		\mathcal{M}_T^r(\mathcal{P}\otimes\mathcal{Z},\d t\times\mathbb{P}\times\nu;\mathcal{D}(A))$ and $\alpha=0$, i.e. $\|e^{tA}\|_{L(E)}\leq1$.
		It is known (see \cite{Zhu2017}) that process
		$X_t$, $t\in[0,T]$ is a unique strong solution to the following problem
		\begin{align}\label{SPDE}
		\begin{split}
		   dX_t&=AX_tdt+\int_Z\xi(t,z)\,\tilde{N}(\d t,\d z),\ \ t\in[0,T],\\
		   X_0&=0.
		\end{split}
		\end{align}
		Apparently $X_t$ is an $E$-valued c\`{a}dl\`{a}g process. Let $\tau$ be a stopping time with values in $[0.T].$ We have
		\begin{align*}
		X_{t\wedge\tau}=\int_0^t1_{[0,\tau]}(s)AX_s\d s+\int_0^t\int_Z1_{[0,\tau]}(s)\xi(s,z)\,\tilde{N}(\d s,\d z).
		\end{align*}
	   Hence, by applying the It\^o  formula \eqref{theo-Ito-3} to $\psi(\cdot)=|\cdot|_E^p$ and then using the fact that $\psi^\prime(x)(Ax)\leq 0$ for all $x\in D(A)$ (see e.g. Lemma 4.7 in \cite{Zhu2017}), we obtain for $t\in[0,T]$, $\mathbb{P}$-a.s.
		\begin{align}
		\begin{split}\label{EQ-1}
		   \psi(X_{t\wedge\tau})&\leq \int_0^t\int_Z1_{[0,\tau]}(s)\psi(X_{s-}+\xi(s,z))-\psi(X_{s-})\,\tilde{N}(\d s,\d z)\\
		  &\hspace{1cm}+\int_0^t\int_Z1_{[0,\tau]}(s)\Big{[}\psi(X_{s-}+\xi(s,z))-\psi(X_{s-})-\psi^\prime(X_{s-})(\xi(s,z))\Big{]}\,\nu(\d z)\d s\\
		   &=     \int_0^t\int_Z1_{[0,\tau]}(s)\psi^\prime(X_{s-})(\xi(s,z))\,\tilde{N}(\d s,\d z)\\
		  &\hspace{1cm}+\int_0^t\int_Z1_{[0,\tau]}(s)\Big{[}\psi(X_{s-}+\xi(s,z))-\psi(X_{s-})-\psi^\prime(X_{s-})(\xi(s,z))\Big{]}\,N(\d s,\d z)\\
		&=:I_1(t)+I_2(t).
		\end{split}
		\end{align}
	 We again follow the same line of argument as used in deducing \eqref{burk-ineq-I_1} and \eqref{sec-1-eq-21} to get
	 	\begin{align*}
	\E\sup_{0\leq t\leq T}I_1(t)&\leq C_{p}\,\varepsilon \E\sup_{t\in[0,T]}|X_{t\wedge\tau}|^p_E+C_{p,\frac{1}{\varepsilon}}
	\E \Big(\int_0^{T}\int_Z1_{[0,\tau]}|\xi(s,z)|_E^r \,N(\d s,\d z)\Big)^{\frac{p}{r}},\\
	\E\sup_{0\leq t\leq T}I_2(t)&\leq C_{p}\,\varepsilon \E\sup_{t\in[0,T]}|X_{t\wedge\tau}|^p_E+C_{p,\frac{1}{\varepsilon}}
	\E \Big(\int_0^{T}\int_Z1_{[0,\tau]}|\xi(s,z)|_E^r \,N(\d s,\d z)\Big)^{\frac{p}{r}}.
	\end{align*}
		By choosing a suitable number $\varepsilon$ such that $\varepsilon C_{p,r}=\frac{1}{4}$, we get
	       	\begin{align*}
		       \E\sup_{0\leq t\leq T}|X_{t\wedge\tau}|^p_E\leq
		C_{p,r}\E\left(\int_0^t\int_Z1_{[0,\tau]}(s)|\xi(s,z)|_E^{r}\,N(\d s,\d z)\right)^{\frac{p}{r}}.
		\end{align*}
			 \textbf{Step 2}
	Suppose that $\alpha=0$. Take 	$\xi\in
		\mathcal{M}_T^p(\mathcal{P}\otimes\mathcal{Z},\d t\times\mathbb{P}\times\nu;E)$. Put $\xi^n(t,\omega,z)=nR(n,A)\xi(t,\omega,z)$ on $[0,T]\times\Omega\times Z$, where $R(n,A)=(nI-A)^{-1}$. Since $A$ is the infinitesimal generator of the contraction $C_0$-semigroup $e^{tA}$, $t\geq0$, by the Hille-Yosida theorem, we have $\xi^n\rightarrow\xi$ pointwisely on $[0,T]\times\Omega\times Z$ and $|\xi^n|_E\leq |\xi|_E$. Define,  for each $n\in\mathbb{N}$, a process $X^n$ by
		\begin{align*}
		    X^n_t=\int_0^te^{(t-s)A}\xi^n(s,z)\,\tilde{N}(\d s,\d z),\; t\in [0,T].
		\end{align*}
		Then for each $n\in\mathbb{N}$, $X^n$ is an $E$-valued c\`{a}dl\`{a}g process.
		By the discussion in step 1, we infer
		 \begin{align*}
		   \lim_{n,m\rightarrow\infty} \E\sup_{0\leq t\leq T}|X^n_t-X^m_t|_E^p=0,
		\end{align*}
		from which we may deduce by using the usual argument that $X^n$ is almost surely uniformly convergent on $[0,T]$ to some  c\`{a}dl\`{a}g process $\bar{X}$ satisfying $\lim\limits_{n\rightarrow\infty}\E\sup_{t\in[0,T]}|\bar{X}_t-X^n_t|_E^p=0$ and for any stopping time $\tau$ in $[0,T]$
	       	\begin{align}\label{ineq-proof-1}
		       \E\sup_{0\leq t\leq T}|\bar{X}_{t\wedge\tau}|_E^p\leq
		C_{p,r}\E\left(\int_0^T\int_Z1_{[0,\tau]}(s)|\xi(s,z)|^{r}_E\,N(\d s,\d z)\right)^{\frac{p}{r}}.
		\end{align}
	    Meanwhile, since
	            	\begin{align*}
		       \E|\bar{X}_t-X_t|_E^r
			   &\leq 2^r\E|\bar{X}_t-X^n_t|_E^p+2^r\E\left|\int_0^t\int_Z\Big{(}e^{(t-s)A}\xi^n(s,z)-e^{(t-s)A}\xi(s,z)\Big{)}\,\tilde{N}(\d s,\d z)\right|^r\\
	    	   &\leq 2^r\E|\bar{X}_t-X^n_t|^r_E+C_{r}\E\int_0^t\int_Z|\xi^n(s,z)-\xi(s,z)|^r_E\,\nu(\d z)\,\d s,
	  \end{align*}
	we infer that \dela{that $X^n_t$ converges to $X_t$ in $L_p(\Omega)$ for every $t\in[0,T]$. Hence }$\bar{X}$ is a modification of $X$. The required result then follows.   \\
		\textbf{Step 3} Let $\alpha>0$. Define
	\begin{align*}
		\tilde{X}_t=e^{-\alpha t}X_t=\int_0^t\int_Z e^{(A-\alpha I)(t-s)}\big(e^{-s\alpha}\xi(s,z)\big)\,\tilde{N}(\d s,\d z).
	\end{align*}
	Note that $e^{(A-\alpha I)t}$ is a $C_0$-semigroup of contractions. Applying \eqref{ineq-proof-1}, we have
	\begin{align*}
	   \E\sup_{0\leq t\leq T}|X_t|^p_E\leq e^{\alpha p T} \E\sup_{0\leq t\leq T}|\tilde{X}_t|^p_E\leq e^{\alpha p T}C_{p,r}\E\Big(\int_0^T\int_Z |\xi(s,z)|^r_E\, N(\d s,\d z)\Big)^{\frac{p}{r}}.
	\end{align*}
	\end{proof}
	%-----------------------
		\begin{cor}   Let $(e^{tA})_{t\geq0}$ be a $C_0$-contraction semigroup on a martingale type $r$ Banach space $(E,|\cdot|_E)$, $1<r\leq 2$
		and let $\xi\in\mathcal{M}_T^r(\mathcal{P}\otimes\mathcal{Z},\d t\times\mathbb{P}\times\nu;E)$. Then there exists a c\`adl\`ag modification $\bar{X}$ such that
		\begin{align}
			       \E\sup_{t\in[0,T]} \big|\bar{X}_t\big|_E^p\leq  &e^{\alpha pT} C_{p,r}\E\Big(\int_0^T\int_Z |\xi(s,z)|^r_E \;\nu(\d z)\,\d s\Big)^{\frac{p}{r}}\quad\text{for}0< p\leq r.\label{Lp-estimates-1}
		\end{align}
	\end{cor}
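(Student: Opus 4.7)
The plan is to split the range at $p=r$: the endpoint $p=r$ follows immediately from Theorem \ref{Th-MI-mian-1} together with the compensator identity, and the strictly sub-$r$ range $0<p<r$ is reduced to the $p=r$ case via Lenglart's domination inequality. The c\`adl\`ag modification $\bar{X}$ is the one already produced by Theorem \ref{Th-MI-mian-1}, so nothing further is required on the existence side.

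For $p=r$, I would apply Theorem \ref{Th-MI-mian-1} with exponent $r$ to obtain
$\E\sup_{t\in[0,T]}|\bar{X}_t|_E^r\leq e^{\alpha rT}C_{r}\,\E\int_0^T\!\int_Z|\xi(s,z)|_E^r\,N(\d s,\d z)$.
Because $|\xi|_E^r$ is a nonnegative $\mathcal{P}\otimes\mathcal{Z}$-measurable integrand, identity \eqref{sec-2-eq-13-1} (or equivalently the observation in Remark \ref{rem-lem-Kallenberg}) lets me replace $N(\d s,\d z)$ by $\nu(\d z)\,\d s$ under the expectation, giving the claimed bound in the degenerate case $p/r=1$.

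For $0<p<r$, introduce the nondecreasing c\`adl\`ag process $M_t:=\sup_{s\leq t}|\bar{X}_s|_E^r$ and the nondecreasing predictable process $A_t:=\int_0^t\!\int_Z|\xi(s,z)|_E^r\,\nu(\d z)\,\d s$, and prove the domination
\begin{align*}
\E\,M_\tau\ \leq\ e^{\alpha rT}C_r\,\E\,A_\tau\qquad\text{for every stopping time }\tau\leq T.
\end{align*}
Once this holds, Lenglart's inequality applied with exponent $q=p/r\in(0,1)$ immediately yields $\E\sup_{t\in[0,T]}|\bar{X}_t|_E^p=\E\sup_t M_t^{p/r}\leq c_{p,r}\,\E\,A_T^{p/r}$, which is exactly the required inequality after absorbing the semigroup factor.

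The one non-trivial point is verifying the stopped bound $\E M_\tau\leq e^{\alpha rT}C_r\E A_\tau$, because the convolution semigroup entangles past and present times and so $\bar{X}_{t\wedge\tau}$ is not itself a stochastic convolution. My plan is to work with the auxiliary process $Y_t:=\int_0^t\!\int_Ze^{(t-s)A}\mathbf 1_{[0,\tau]}(s)\xi(s,z)\,\tilde{N}(\d s,\d z)$; a direct check shows $Y_s=\bar{X}_s$ for $s\leq\tau$, so $\sup_{s\leq\tau}|\bar{X}_s|_E=\sup_{s\leq\tau}|Y_s|_E\leq\sup_{s\leq T}|Y_s|_E$, and Theorem \ref{Th-MI-mian-1} at $p=r$ applied to the integrand $\mathbf 1_{[0,\tau]}\xi$ combined with the compensator identity produces exactly $e^{\alpha rT}C_r\E A_\tau$ on the right. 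Handling the case $\alpha>0$ can then be done exactly as in Step 3 of the proof of Theorem \ref{Th-MI-mian-1}, by passing to $\tilde{X}_t=e^{-\alpha t}\bar{X}_t$ and noting that $e^{(A-\alpha I)t}$ is a $C_0$-semigroup of contractions. I expect the only delicate check to be the identification $Y_s=\bar{X}_s$ on $[0,\tau]$ and the careful invocation of Lenglart's inequality with a predictable dominating process.
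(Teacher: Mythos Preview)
Your proposal is correct and follows essentially the same route as the paper: establish the stopped estimate at exponent $p=r$ and then use a Lenglart-type domination argument to descend to $0<p<r$. The paper invokes Proposition~IV.4.7 of Revuz--Yor in place of Lenglart's inequality and, rather than introducing the auxiliary process $Y$, simply appeals to the stopped inequality \eqref{ineq-proof-1} already obtained in Step~2 of the proof of Theorem~\ref{Th-MI-mian-1}; your construction of $Y$ reproduces exactly that bound, so the two arguments are equivalent.
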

	\begin{proof}
	By applying \eqref{max-eq-1-1} with $p=r$, we have
	\begin{align*}
	      \E\sup_{0\leq t\leq T} \Big|\int_0^t\int_Z e^{(t-s)A}\xi(s,z)\;\tilde{N}(\d s,\d z)\Big|_E^r&\leq e^{\alpha p T}C_{r}\E\Big(\int_0^T\int_Z1_{[0,\tau]} |\xi(s,z)|^r_E\; N(\d s,\d z)\Big)\\
	      &=e^{\alpha p T}C_{r}\E\Big(\int_0^T1_{[0,\tau]}\int_Z |\xi(s,z)|^r_E\; \nu(\d z)\d s\Big).
	\end{align*}
	 Let us set
	 \begin{align}
	 Z_t=\Big|\int_0^t\int_Z e^{(t-s)A}\xi(s,z)\;\tilde{N}(\d s,\d z)\Big|_E^r\text{ and }A_t=\int_0^t\int_Z|\xi(s,z)|_E^r\;\nu(\d z)\d s,\;\; t\in[0,T] .
	 \end{align}
	 We find that for any stopping time $\tau$ in $[0,T]$,
	$$ \E Z_{\tau}\leq C_r\E A_{\tau}.$$
	Moreover, $Z$ is a c\`{a}dl\`{a}g process or more precisely it has a c\`{a}dl\`{a}g version and the process $A$ is increasing. Put $k=\frac{p}{r}$. For $p\in(0,r)$, notice that $k\in(0,1)$. By means of Proposition IV4.7 from \cite{Yor}, we deduce
	\begin{align*}
	\E \sup_{0\leq t\leq T}Z_t^{k}\leq \frac{2-k}{1-k}\E A_T^{k}.
	\end{align*}
		Hence we infer for $0< p<r$,
	\begin{align}\label{max-ine-proof-less-1}
	\E \sup_{0\leq t\leq T}\Big|\int_0^t\int_Z e^{(t-s)A}\xi(s,z)\;\tilde{N}(\d s,\d z)\Big|_E^p \leq e^{\alpha pT}C_{p,r}\E\Big(\int_0^T\int_Z|\xi(s,z)|_E^r\;\nu(\d z)\d s\Big)^{\frac{p}{r}} .
	\end{align}
This proves inequality \eqref{Lp-estimates-1}.
		\end{proof}
	In the same way as we deduce inequality \eqref{Lp-1} from Theorem \ref{Th-burk}, we can get the following $L^p$ inequality for stochastic convolutions from the preceding theorem immediately.
	%--------------------
	\begin{cor}\label{cor-max-eq-1}\dela{  Let $E$ be a martingale type $r$ Banach space, $1<r\leq 2$.}Let $(e^{tA})_{t\geq0}$ be a $C_0$-contraction semigroup on a martingle type $r$ Banach space $(E,|\cdot|_E)$, $1<r\leq 2$
		and let $\xi\in\mathcal{M}_T^r(\mathcal{P}\otimes\mathcal{Z},\d t\times\mathbb{P}\times\nu;E)$. Then there exists a c\`adl\`ag modification $\bar{X}$ such that for $r\leq p<\infty$,
		\begin{align}
		\begin{split}\label{max-eq-1}
			       \E\sup_{t\in[0,T]} \big|\bar{X}_t\big|_E^p\leq & e^{\alpha pT}C_{p,r}\Big[\E\Big(\int_0^T\int_Z |\xi(s,z)|^p_E \;\nu(\d z)\,\d s\Big)+\E\Big(\int_0^T\int_Z |\xi(s,z)|^r_E \;\nu(\d z)\,\d s\Big)^{\frac{p}{r}}\Big].
			       		\end{split}
		\end{align}
	\end{cor}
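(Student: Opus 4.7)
The plan is to deduce the corollary from Theorem \ref{Th-MI-mian-1} in exactly the same fashion as Corollary \ref{coro-burk-1} was deduced from Theorem \ref{Th-burk}; indeed, the author's own preamble ``In the same way as we deduce inequality \eqref{Lp-1} from Theorem \ref{Th-burk} \ldots'' signals that this is the intended route. The decisive observation is that the conversion from $N$-integrals to $\nu$-integrals is already encapsulated in Remark 2.6, namely inequality \eqref{rem-cor-eq-main}, which holds for any $\mathcal{P}\otimes\mathcal{Z}$-measurable integrand in the appropriate class and does not involve the semigroup at all.

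Step one: assume without loss of generality that the right-hand side of \eqref{max-eq-1} is finite, so in particular $\xi\in\mathcal{M}_T^r(\mathcal{P}\otimes\mathcal{Z},\d t\times\mathbb{P}\times\nu;E)$, and let $\bar X$ be the c\`adl\`ag modification of $X$ produced by Theorem \ref{Th-MI-mian-1}. Step two: apply Theorem \ref{Th-MI-mian-1} directly to obtain, for $r\leq p<\infty$,
\begin{equation*}
\E\sup_{0\leq t\leq T}|\bar X_t|_E^p\leq e^{\alpha pT}C_{p,r}\,\E\Big(\int_0^T\int_Z|\xi(s,z)|_E^r\,N(\d s,\d z)\Big)^{\frac{p}{r}}.
\end{equation*}
Step three: apply inequality \eqref{rem-cor-eq-main} from Remark 2.6 to bound the $N$-integral on the right by
\begin{equation*}
C_{p,r}\,\E\int_0^T\int_Z|\xi(s,z)|_E^p\,\nu(\d z)\,\d s+C_{p,r}\,\E\Big(\int_0^T\int_Z|\xi(s,z)|_E^r\,\nu(\d z)\,\d s\Big)^{\frac{p}{r}}.
\end{equation*}
Multiplying through by $e^{\alpha pT}C_{p,r}$ and absorbing constants yields \eqref{max-eq-1}.

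As an alternative route, should one wish to avoid invoking \eqref{rem-cor-eq-main} as a black box, one can mirror the inductive argument in the proof of Corollary \ref{coro-burk-1}. In the base case $r<p\leq r^2$, split $N=\tilde N+\nu\otimes\d s$ and bound the $\tilde N$ part of $(\int|\xi|^r\,N)^{p/r}$ by $\E\int|\xi|^p\,\nu\,\d s$ via inequality \eqref{Lr-1} applied to the real-valued integrand $|\xi|^r$ with the admissible exponent $p/r\in(1,r]$. For $r^{n-1}<p\leq r^n$ one alternates \eqref{max-eq-1-1} with this splitting $n-1$ times, then dominates the intermediate $L^{r^j}$-moments by an interpolation via H\"older and Young's inequalities, exactly as in the last display of the proof of Corollary \ref{coro-burk-1}.

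There is no serious obstacle in this argument: the hard work has been done in Theorem \ref{Th-MI-mian-1} (the semigroup and contraction/dilation issues) and in Remark 2.6 (the elementary but slightly fiddly combinatorics of passing from $N$ to $\nu$). The only mild point to verify is that the constant $C_{p,r}$ arising in Remark 2.6, which was derived for the pure stochastic integral $u_t$, is equally applicable here since \eqref{rem-cor-eq-main} is a statement about $\xi$ alone and makes no reference to $u_t$ or to the semigroup $(e^{tA})_{t\geq 0}$. Consequently the combination is immediate.
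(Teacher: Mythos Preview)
Your proposal is correct and matches the paper's intended argument exactly: the paper itself gives no proof beyond the sentence ``In the same way as we deduce inequality \eqref{Lp-1} from Theorem \ref{Th-burk}, we can get the following $L^p$ inequality for stochastic convolutions from the preceding theorem immediately,'' and your Step~1--3 route (Theorem~\ref{Th-MI-mian-1} followed by \eqref{rem-cor-eq-main}) is precisely that deduction. Your observation that \eqref{rem-cor-eq-main} is a statement about $\xi$ alone, independent of the semigroup, is the key point that makes the transfer immediate.
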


%----------------------------------------
	\begin{rem}
		 Note that if $(e^{tA})_{t\geq0}$ is an analytic semigroup with generators $A$ such that $-A$ has a bounded $H^{\infty}$-calculus,  we can always find an equivalent norm on $E$ for which $E$ is $2$-smooth and $(e^{tA})_{t\geq0}$ is a contraction $C_0$-semigroup; see \cite{[VerWei]}. For a related earlier result see \cite[Corollary 6.2]{Brz1}.
	Both our Theorems 3.1 and 3.3 from \cite{[DMN]} are applicable to $C_0$-semigroups of positive contractions on $L^q(S)$ spaces, where $S$ is a measure space and $q \in (1,\infty)$; see Example 3.1 in \cite{[DMN]}. However, more precise understanding of the relationship  between our results and those from \cite{[DMN]} would require further analysis.   Our results have roots in the PhD thesis of the first named author which was known to the authors of  \cite{[DMN]}.

		  \dela{ To do this properly, by \cite{[VerWei]}, we may define an equivalent norm $\|x\|_E=\|T_x\|_{\gamma(\mathbb{R}_+,E)}$, where the operator $T_x$ is defined by $T_x(t)=(-A)^{\frac{1}{2}}e^{tA}x$, $t\in\mathbb{R}_+$. Since there exists an isometry from $\gamma(\mathbb{R}_+,E)$ to $L^2(\Omega,E)$, it is enough to show that $L^2(\Omega,E)$ is $2$-smooth with respect to the norm $\|\cdot\|_{L^2(\Omega,E)}$. Observe that for all $u\in L^2(\Omega,E)$
			\begin{align*}
				    (\|y\|^2_{L^2(\Omega,E)})'(u)=\int_{\Omega}|(y(\omega)|_E^2)'(u(\omega))\d \mathbb{P}(\omega)
			\end{align*}
			and
			 \begin{align*}
				    \Big|(\|y\|^2_{L^2(\Omega,E)})'(u)-(\|x\|^2_{L^2(\Omega,E)})'(u)\Big|_{\mathbb{R}}&=\Big|\int_{\Omega}(|(y(\omega)|_E^2)'(u(\omega))-(|(x(\omega)|_E^2)'(u(\omega))\d\mathbb{P}(\omega)\Big|_{\mathbb{R}} \\
				&\leq \int_{\Omega}\|(|(y(\omega)|_E^2)'-(|(x(\omega)|_E^2)'\|_{L(E,\mathbb{R})}|u(\omega)|\d\mathbb{P}(\omega)\\
				&\leq L\int_{\Omega} |y(\omega)-x(\omega)|_E|u(\omega)|_E \d\mathbb{P}(\omega)\\
				&\leq L\|y-x\|_{L^2(\Omega,E)}\|u\|_{L^2(\Omega,E)},
			\end{align*}
			where the constant $L$ comes from the Lipschitz continuity of the first derivative of $|\cdot|_E^2$, since $E$ is $2$-smooth.
		 Therefore, the first derivative of $\|\cdot\|^2_{L^2(\Omega,E)}$ is Lipschitz continuous. This implies the space $L^2(\Omega,E)$ with the norm $\|\cdot\|_{L^2(\Omega,E)}$ is $2$-smooth. Hence the space $(E,\|\cdot\|_E)$ is a $2$-smoothness Banach space. Also, we see that
		\begin{align*}
			  \|e^{tA}x\|_E=\|T_{e^{tA}x}\|_{\gamma(\mathbb{R}_+,E)}=\|T_x\|_{\gamma([s,\infty),E)}\leq\|T_x\|_{\gamma(\mathbb{R}_+,E)}=\|x\|_E,
		\end{align*}
		which shows that $(e^{tA})_{t\geq0}$ is a contraction semigroup on $(E,\|\cdot\|_E)$.}
	\end{rem}

%----------------------------------------

	%------------------------------------Exponential Tail estimates-------------------
	\subsection{Exponential tail estimates}
	With more effort the method we described above to derive \eqref{max-eq-1-1} and \eqref{max-eq-1} can be applied to obtain the following exponential tail estimates for stochastic convolutions driven by compensated Poisson random measures.
	\begin{thm}\dela{Assume that $E$ is a martingale type $2$ Banach space.} Let $(e^{tA})_{t\geq0}$ be a $C_0$-contraction semigroup on a martingale type $2$ Banach space $(E,|\cdot|_E)$
		and let $\xi\in\mathcal{M}_T^2(\mathcal{P}\otimes\mathcal{Z},\d t\times\mathbb{P}\times\nu;E)$.
	 If there exists $\lambda>0$ and $M_{\lambda}>0$ such that
	    \begin{align}
	    	    \int_0^T\int_Z e^{\lambda^{\frac12}|\xi(s,z)|_E}\lambda |\xi(s,z)|_E^2\;\nu(\d z)\;\d s\dela{+\int_0^T\int_Z|\xi(s,z)|^2_E\,\nu(\d z)\d s+\int_0^T\|g_s\|^2_{\gamma(H,E)}\d s}\leq M_{\lambda},\quad\mathbb{P}\text{-a.s.,}
		    	     \end{align}
	then for every $R>0$,
	\begin{align}\label{exp-tail-eq}
		    \mathbb{P}\Big( \sup_{0\leq t\leq T}|X_t|_E \geq R \Big)\leq C_{\lambda}e^{-(1+\lambda R^2)^{\frac{1}{2}}}
	\end{align}
	with $C_\lambda=e^{1+3CM_{\lambda}}$.
	\end{thm}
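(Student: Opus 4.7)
The plan is to apply the It\^o formula from Appendix B to the functional $F:E\to\mathbb{R}$ given by $F(x):=\phi(|x|_E)$, where
$$\phi(r):=\exp\bigl((1+\lambda r^2)^{1/2}\bigr),\quad r\geq 0,$$
and to conclude via Chebyshev. Note $\phi(0)=e$ and $\phi$ is strictly increasing, so once we prove $\E\sup_{0\leq t\leq T}F(X_t)\leq e^{1+3CM_\lambda}$ the asserted bound \eqref{exp-tail-eq} follows immediately from
$$\mathbb{P}\bigl(\sup_{0\leq t\leq T}|X_t|_E\geq R\bigr)=\mathbb{P}\bigl(\sup_{0\leq t\leq T}F(X_t)\geq\phi(R)\bigr)\leq \frac{\E\sup_{0\leq t\leq T}F(X_t)}{\phi(R)}.$$

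Two elementary facts about $\phi$ drive the analysis. First, the subadditive-type inequality $\phi(a+b)\leq\phi(a)\,e^{\lambda^{1/2}b}$ for $a,b\geq0$, obtained from the elementary bound $(1+\lambda(a+b)^2)^{1/2}-(1+\lambda a^2)^{1/2}\leq\lambda^{1/2}b$. Second, writing $F(x)=G(|x|_E^2)$ with $G(u):=\phi(\sqrt u)$ smooth on $[0,\infty)$ and recalling that in a martingale type $2$ space $|\cdot|_E^2$ is of class $C^1$ with Lipschitz Fr\'echet derivative, the composition $F$ is $C^1$ with a locally Lipschitz derivative, which is enough for the It\^o formula of Appendix B to apply. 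Combining these with the Lipschitz bound on $(\psi_2)^\prime$ yields the crucial pointwise estimate
$$F(x+y)-F(x)-F^\prime(x)(y)\leq C\lambda\,e^{\lambda^{1/2}|y|_E}|y|_E^2\,F(x),\qquad x,y\in E.$$

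The execution then parallels the proof of Theorem~\ref{Th-MI-mian-1}. I would first approximate $\xi$ by $\xi^n:=nR(n,A)\xi$, so that the corresponding convolution $X^n$ takes values in $\mathcal D(A)$, and localize with $\tau_k:=\inf\{t:|X^n_t|_E\geq k\}\wedge T$. Applying It\^o to $F(X^n_{t\wedge\tau_k})$, the drift $\int_0^{t\wedge\tau_k}G^\prime(|X^n_s|_E^2)(\psi_2)^\prime(X^n_s)(AX^n_s)\,\d s$ is non-positive because $G^\prime\geq 0$ and $(\psi_2)^\prime(x)(Ax)\leq 0$ on a contraction semigroup (as used already in the proof of Theorem~\ref{Th-MI-mian-1}). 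Combined with the second-order bound above, this gives the pathwise inequality
$$F(X^n_{t\wedge\tau_k})\leq e+\mathcal M^n_{t\wedge\tau_k}+\int_0^{t\wedge\tau_k}\!\int_Z C\lambda\,e^{\lambda^{1/2}|\xi(s,z)|_E}|\xi(s,z)|_E^2\,F(X^n_{s-})\,\nu(\d z)\,\d s,$$
where $\mathcal M^n$ is a local martingale. Taking expectations, invoking the hypothesis $\int_0^T\!\int_Z\lambda\,e^{\lambda^{1/2}|\xi|_E}|\xi|_E^2\,\nu(\d z)\,\d s\leq M_\lambda$ a.s., and applying Gronwall yields $\E F(X^n_{t\wedge\tau_k})\leq e\cdot e^{CM_\lambda}$. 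To upgrade this to a bound on $\E\sup_{0\leq t\leq T}F(X^n_t)$ I would apply Doob's sub-martingale inequality, partitioning $[0,T]$ into finitely many sub-intervals on each of which the cumulative compensator is sufficiently small; iterating Doob over these sub-intervals accounts for the extra factor and yields $\E\sup_t F(X^n_t)\leq e^{1+3CM_\lambda}$. Finally letting $k\to\infty$ by Fatou and then $n\to\infty$ using the $L^p$-convergence $X^n\to X$ established in Theorem~\ref{Th-MI-mian-1} completes the proof.

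The principal obstacle is the second-order estimate on $F$: since $F$ is in general not twice Fr\'echet differentiable on $E$, Taylor's theorem cannot be invoked directly. The bound must be obtained by writing
$$F(x+y)-F(x)-F^\prime(x)(y)=\int_0^1\bigl[F^\prime(x+ty)(y)-F^\prime(x)(y)\bigr]\,\d t,$$
then splitting the integrand via the chain rule for $F=G\circ|\cdot|_E^2$, and exploiting separately the Lipschitz continuity of $(\psi_2)^\prime$ in the $2$-smooth norm and the growth control $\phi(|x+ty|_E)\leq\phi(|x|_E)\,e^{\lambda^{1/2}|y|_E}$. A secondary technical point is the bookkeeping in the Doob step, which is responsible for the factor $3$ in the final exponent.
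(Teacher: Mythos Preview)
Your approach and the paper's share the same key second-order estimate --- your displayed bound on $F(x+y)-F(x)-F'(x)(y)$ is exactly what the paper proves, written as a bound on $e^{f_\lambda(x+y)-f_\lambda(x)}-1-f_\lambda'(x)(y)$ after factoring out $F(x)=e^{f_\lambda(x)}$ --- but the martingale structure you invoke afterwards is different, and your ``upgrade to $\sup$'' step is where the argument breaks down. After Gronwall you have $\E F(X^n_{t\wedge\tau_k})\leq e^{1+CM_\lambda}$ for each fixed $t$; to pass to $\E\sup_t F(X^n_t)$ you propose Doob's submartingale inequality on sub-intervals. But $F(X^n_t)$ is not a submartingale on any sub-interval, and the partitioning heuristic does not produce one: the partition would have to depend on $\omega$ (the integrand $\lambda e^{\lambda^{1/2}|\xi|}|\xi|^2$ is random), and even on a piece where the compensator is small you are left controlling $\E\sup_t(\mathcal M^n_t-\mathcal M^n_{t_i})$, whose quadratic variation involves $F(X^n_{s-})^2$ and hence BDG loops back to the very quantity you are trying to bound. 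The constant $e^{1+3CM_\lambda}$ does not fall out of this route.

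The paper sidesteps this by constructing the Dol\'eans--Dade exponential. It applies It\^o to $f_\lambda(x)=(1+\lambda|x|_E^2)^{1/2}$ rather than to $F=e^{f_\lambda}$, then sets
\[
Y_t=\int_0^t\!\int_Z\Delta_s f_\lambda\,\tilde N(\d s,\d z)-\int_0^t\!\int_Z\bigl(e^{\Delta_s f_\lambda}-1-\Delta_s f_\lambda\bigr)\,\nu(\d z)\,\d s,
\]
where $\Delta_s f_\lambda:=f_\lambda(X_{s-}+\xi(s,z))-f_\lambda(X_{s-})$, so that $Z_t:=e^{Y_t}$ is a genuine nonnegative local martingale (a second application of It\^o verifies this). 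The second-order estimate then yields the \emph{pathwise} inequality $f_\lambda(X_t)\leq 1+\log Z_t+3CM_\lambda$, and the supermartingale maximal inequality $\mathbb P(\sup_t Z_t\geq a)\leq Z_0/a=1/a$ delivers the tail bound directly --- no Gronwall, no interval-splitting, and the factor $3$ comes transparently from the three terms in the mean-value estimate. If you want to stay within your framework, the correct fix is to observe that $F(X^n_t)\exp\bigl(-C\int_0^t\!\int_Z\lambda e^{\lambda^{1/2}|\xi|_E}|\xi|_E^2\,\nu(\d z)\,\d s\bigr)$ is itself a nonnegative local supermartingale and apply the maximal inequality to that process; this is the paper's idea recast in multiplicative rather than additive form.
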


	\begin{proof}
		Define $f_{\lambda}=(1+\lambda|x|_E^2)^{\frac{1}{2}}$, $x\in E$. It is easy to see that $f^\prime_{\lambda}(x)=\lambda(1+\lambda |x|_E^2)^{-\frac{1}{2}}|x|_Ef_x$, where $f_x$ is the derivative of $|\cdot|_E$ at the point $x\neq 0$. One can show that for every $x,y\in E$,
		\begin{align}\label{sec-3-Lip}
			    |f^\prime_{\lambda}(x)-f^\prime_{\lambda}(y)|_{L(E;\mathbb{R})}\leq C\lambda |x-y|_E,
		\end{align}
		where the constant $C$ depends on the constant appearing in the martingale type $2$ Banach space property.
		As in the proof of Theorem 3.1, we may assume that $\xi\in M_T^2(\mathcal{P}\otimes\mathcal{Z},\d t\times\mathbb{P}\times\nu;\mathcal{D}(A))$ and consider the process
		\begin{align}
			X_t=\int_0^t AX_s\d s+\int_0^t\int_Z \xi(s,z)\;\tilde{N}(\d s,\d z).
		\end{align}
		Since $f^\prime_{\lambda}(x)(Ax)\leq 0$, for all $x\in E$, applying the It\^o  formula \eqref{Ito-formula-3} yields
		\begin{align*}
			 f_{\lambda}(X_t)=&f_{\lambda}(X_0)+\int_0^tf_{\lambda}'(X_s)(Au_{s})\;\d s+\int_0^t\int_Z\Big( f_{\lambda}(X_{s-}+\xi(s,z))-f_{\lambda}(X_{s-})\Big)\;\tilde{N}(\d s,\d z)\\
			                   &+\int_0^t\int_Z \Big(f_{\lambda}(X_{s-}+\xi(s,z))-f_{\lambda}(X_{s-})-f^\prime_{\lambda}(X_{s-})(\xi(s,z))\Big)\;\nu(\d z)\,\d s\\
			\leq& 1+ \int_0^t\int_Z\Big( f_{\lambda}(X_{s-}+\xi(s,z))-f_{\lambda}(X_{s-})\Big)\;\tilde{N}(\d s,\d z) \\
			 &+\int_0^t\int_Z \Big(f_{\lambda}(X_{s-}+\xi(s,z))-f_{\lambda}(X_{s-})-f^\prime_{\lambda}(X_{s-})(\xi(s,z))\Big)\;\nu(\d z)\,\d s.
		\end{align*}
	   Let us set
		   	\begin{align*}
					Y_t&=\int_0^t\int_Z\Big( f_{\lambda}(X_{s-}+\xi(s,z))-f_{\lambda}(X_{s-})\Big)\,\tilde{N}(\d s,\d z)\\
		    		&-\int_0^t\int_Z\Big( e^{f_{\lambda}(X_{s-}+\xi(s,z))-f_{\lambda}(X_{s-})}-1-\big(f_{\lambda}(X_{s-}+\xi(s,z))-f_{\lambda}(X_{s-})\big)\Big)\,\nu(\d z)\,\d s.
		    	\end{align*}
	 Again by applying the It\^o  formula \eqref{theo-Ito-3} to $Y_t$, we get
		\begin{align*}
			    e^{Y_t}=&1
-   \int_0^t\int_Z e^{Y_{s-}}\Big(e^{f_{\lambda}(X_{s-}+\xi(s,z))-f_{\lambda}(X_{s-})}-1-\big(f_{\lambda}(X_{s-}+\xi(s,z))-f_{\lambda}(X_{s-})\big)\Big)\,\nu(\d z)\,\d s\\
			&+\int_0^t\int_Z e^{Y_{s-}+f_{\lambda}(X_{s-}+\xi(s,z))-f_{\lambda}(X_{s-})}-e^{Y_{s-}}\,\tilde{N}(\d s,\d z)\\
			&+ \int_0^t\int_Z \Big(e^{Y_{s-}+f_{\lambda}(X_{s-}+\xi(s,z))-f_{\lambda}(X_{s-})}-e^{Y_{s-}}-e^{Y_{s-}}\big(f_{\lambda}(X_{s-}+\xi(s,z))-f_{\lambda}(X_{s-})\big)\Big)\,\nu(\d z)\,\d s\\
			=&1+\int_0^t\int_Z \big(e^{Y_{s-}+f_{\lambda}(X_{s-}+\xi(s,z))-f_{\lambda}(X_{s-})}-e^{Y_{s-}}\big)\,\tilde{N}(\d s,\d z).
		 \end{align*}
		Therefore, $Z_t=e^{Y_t}$ is a nonnegative local martingale. %--------
		%----------
		By using the mean value theorem twice, the fact $|f_\lambda'(x)|_{L(E;\mathbb{R})}\leq \lambda^{\frac{1}{2}}$, and also \eqref{sec-3-Lip}, we have
		\begin{align*}
			&  \Big|e^{f_{\lambda}(X_{s-}+\xi(s,z))-f_{\lambda}(X_{s-})}-1-f_{\lambda}'(X_{s-})(\xi(s,z))\Big|_E\\
		    =& e^{-f_{\lambda}(X_{s-})}\Big[ e^{f_{\lambda}(X_{s-}+\theta\xi(s,z))}f^\prime_{\lambda}(X_{s-}+\theta\xi(s,z))(\xi(s,z))-e^{f_{\lambda}(X_{s-})}f_{\lambda}'(X_{s-})(\xi(s,z))\Big]\\
			\leq& e^{f_{\lambda}(X_{s-}+\theta\xi(s,z))-f_{\lambda}(X_{s-})}\big|f_{\lambda}'(X_{s-}+\theta \xi(s,z))-f^\prime_{\lambda}(X_{s-}) \big|_E|\xi(s,z)|_E\\
			&+e^{-f_{\lambda}(X_{s-})}\Big[\big(e^{f_{\lambda}(X_{s-}+\theta\xi(s,z))}-e^{f_{\lambda}(X_{s-})}\big)(f^\prime_{\lambda}(X_{s-})(\xi(s,z))\Big]\\
			 =& e^{f_{\lambda}(X_{s-}+\theta\xi(s,z))-f_{\lambda}(X_{s-})}\big|f_{\lambda}'(X_{s-}+\theta \xi(s,z))-f^\prime_{\lambda}(X_{s-}) \big|_E|\xi(s,z)|_E\\
			&+e^{f_{\lambda}(X_{s-}+\theta_1\xi(s,z))-f_{\lambda}(X_{s-})}\big[f_{\lambda}'(X_{s-}+\theta_1\xi(s,z))(\theta\xi(s,z))f_{\lambda}'(X_{s-})(\xi(s,z))\big] \\
			\leq& 2C\lambda e^{\lambda^{\frac{1}{2}}|\xi(s,z)|}|\xi(s,z)|_E^2+C\lambda e^{\lambda^{\frac{1}{2}}|\xi(s,z)|}|\xi(s,z)|_E^2,
		\end{align*}
		where $0<\theta,\theta_1<1$. Hence
		\begin{align*}
			  f_{\lambda}(X_t)&\leq1+\log Z_t
			  +\int_0^t\int_Z\Big( e^{f_{\lambda}(u_{s-}+\xi(s,z))-f_{\lambda}(u_{s-})}-1-\big(f_{\lambda}(X_{s-}+\xi(s,z))-f_{\lambda}(X_{s-})\big)\Big)\nu(\d z)\,\d s \\
			&+\int_0^t\int_Z \Big(f_{\lambda}(X_{s-}+\xi(s,z))-f_{\lambda}(X_{s-})-f^\prime_{\lambda}(X_{s-})(\xi(s,z))\Big)\,\nu(\d z)\,\d s\\
			&\leq   1+\log Z_t+3C\lambda\int_0^T\int_Z e^{\lambda^{\frac{1}{2}}|\xi(s,z)|}|\xi(s,z)|^2\nu(\d z)\,\d s\\% 	
			&\leq 1+\log Z_t+3C M_{\lambda}.
		\end{align*}
	It follows that
	\begin{align}
	\begin{split}\label{expo-proof-eq-1}
		  \mathbb{P}\Big( \sup_{0\leq t\leq T}|X_t|\geq R\Big)&= \mathbb{P}\Big( \sup_{0\leq t\leq T}f_{\lambda}(X_t)\geq (1+\lambda R^2)^{\frac{1}{2}}\Big)\\
		&\leq \mathbb{P}\Big(\sup_{0\leq t\leq T}\log Z_t\geq (1+\lambda R^2)^{\frac{1}{2}}-1-3CM_{\lambda}\Big) \\
		&\leq\mathbb{P}\Big(\sup_{0\leq t\leq T} Z_t\geq e^{(1+\lambda R^2)^{\frac{1}{2}}-1-3C M_{\lambda}}\Big)\\
		&\leq e^{1+3C M_{\lambda}-(1+\lambda R^2)^{\frac{1}{2}}}\E\sup_{0\leq t\leq T}Z_t \\
		&\leq e^{1+3C M_{\lambda}-(1+\lambda R^2)^{\frac{1}{2}}}\\
		&\leq C_{\lambda}e^{-(1+\lambda R^2)^{\frac{1}{2}}},
		\end{split}
	\end{align}
	where $C_{\lambda}=e^{1+3C M_{\lambda}}$.
	\dela{
	\dela{Set $\beta(\lambda)=1+3C\lambda M-(1+\lambda R^2)^{\frac{1}{2}}$.} If $R^2\leq 6CM_{\lambda}$, \eqref{exp-tail-eq} always holds since
	\begin{align*}
		     \mathbb{P}\Big( \sup_{0\leq t\leq T}|X_t|_E\geq R\Big)\leq 1< 3e^{-\frac{1}{2}}\leq 3e^{-\frac{R^2}{12 CM_{\lambda}}}.
	\end{align*}
When $R^2>6C M$, let us choose $\lambda_0>0$ such that $1+\lambda_0 R^2=\Big(\frac{R^2}{6CM_{\lambda}}\Big)^2$.
	Since \eqref{expo-proof-eq-1} holds for any $\lambda>0$, we conclude
	\begin{align*}
		   \mathbb{P}\Big( \sup_{0\leq t\leq T}|X_t|_E\geq R\Big)\leq e^{1+3C\lambda_0M_{\lambda}-(1+\lambda_0 R^2)^{\frac{1}{2}} }< e^{1-\frac{R^2}{12CM_{\lambda}}}< 3e^{-\frac{R^2}{12CM_{\lambda}}}.
	\end{align*}
 }
	\end{proof}

%---------------------------subsection--------------------------------------------------------
	 \subsection{Maximal $L^p$ estimates for stochastic convolutions w.r.t. L\'{e}vy processes}
	 Now let us consider the issue of maximal $L^p$ estimates for the stochastic convolutions driven by a more general L\'{e}vy-type process. Here we assume that $r=2$, i.e. $E$ is a martingale type $2$ Banach space.  Let $(e^{tA})_{t\geq0}$ be a $C_0$-semigroup on E, with the generator denoted by $A$,  of contraction type with respect to the equivalent $2$-smooth norm $|\cdot|_E$ such that $\|e^{tA}\|\leq e^{t\alpha} $  for some $\alpha \geq 0$.

	 We will establish a type of maximal inequality for the following L\'{e}vy-type stochastic convolution
		               \begin{align}\label{sec-3-eq1}
			      X_t=\int_0^te^{(t-s)A}g_s \d W_s+\int_0^t\int_Z e^{(t-s)A}\xi(s,z)\tilde{N}(\d s,\d z), \;\;\; t\geq 0.
			\end{align}
%------------------------
%---------Theorem-----------
	\begin{thm}\label{max-Levy}
	Let $(e^{tA})_{t\geq0}$ be a $C_0$-contraction semigroup on a martingale type $2$ Banach space $(E,|\cdot|_E)$
		and let $(g_t)_{t\in[0,T]}$ be a process in $M([0,T];\gamma(H;E))$ and $\xi\in\mathcal{M}_T^2(\mathcal{P}\otimes\mathcal{Z},\d t\times\mathbb{P}\times\nu;E)$.
	   There exists a c\`adl\`ag modification $\bar{X}$ such that for $2\leq p<\infty$ and some constant $C_p$ depending on $p$ such that
	\begin{align}\label{MI-main-1}
		\E\sup_{0\leq t\leq T} |\bar{X}_t|_E^p\leq e^{\alpha T}C_{p}&\Big[\E\big(\int_0^T \|g_s\|^2_{\gamma(H,E)}\d s\big)^{\frac{p}2}+\E\Big(\int_0^T\int_Z |\xi(s,z)|^p_E\;\nu(\d z)\,\d s\Big)\nonumber\\
		 &+\E \Big(\int_0^T\int_Z|\xi(s,z)|^{2}_E\;\nu(\d z)\,\d s\Big)^{\frac{p}{2}}\Big].
	\end{align}
	\end{thm}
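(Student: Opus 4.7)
The plan is to reduce the statement to the special case of Theorem \ref{theo-Ito-2} (the one that allows one to drop the $\int_0^T |a_s|_E\,\d s$ term) by writing the stochastic convolution as a strong solution of an SPDE and exploiting the dissipativity of $A$ with respect to the equivalent $2$-smooth norm $|\cdot|_E$. This parallels, in the parabolic setting, the strategy used in Theorem \ref{Th-MI-mian-1}, only now one must also carry along the Wiener integral term from Theorem \ref{theo-Ito-2}.

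The first step is a rescaling: setting $\tilde X_t := e^{-\alpha t} X_t$ one sees that $\tilde X$ is the stochastic convolution associated with the $C_0$-contraction semigroup $e^{t(A-\alpha I)}$ and the integrands $e^{-\alpha s} g_s$ and $e^{-\alpha s}\xi(s,z)$, so we may assume $\alpha = 0$; the factor $e^{\alpha p T}$ in \eqref{MI-main-1} is restored at the end. Next, I approximate the integrands by the Yosida approximants $g^n_s := n R(n,A) g_s$ and $\xi^n(s,z) := n R(n,A) \xi(s,z)$. By the Hille--Yosida theorem these take values in $\mathcal{D}(A)$, converge pointwise to $g_s$ and $\xi(s,z)$, and satisfy the pointwise bounds $\|g^n_s\|_{\gamma(H,E)} \le C\,\|g_s\|_{\gamma(H,E)}$ and $|\xi^n(s,z)|_E \le C\,|\xi(s,z)|_E$. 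To also meet the hypothesis $\E N(t,Z) < \infty$ of the special case of Theorem \ref{theo-Ito-2}, I further truncate in the mark variable: pick $D_m \uparrow Z$ with $\nu(D_m) < \infty$, put $\xi^{n,m} := \xi^n \mathbf{1}_{D_m}$, and consider
\begin{equation*}
X^{n,m}_t = \int_0^t e^{(t-s)A} g^n_s\,\d W_s + \int_0^t\!\!\int_{D_m} e^{(t-s)A} \xi^{n,m}(s,z)\,\tilde N(\d s,\d z).
\end{equation*}
Exactly as in Step 1 of the proof of Theorem \ref{Th-MI-mian-1}, $X^{n,m}$ is then a genuine strong solution of $\d X^{n,m}_t = A X^{n,m}_t\,\d t + g^n_t\,\d W_t + \int_{D_m} \xi^{n,m}(t,z)\,\tilde N(\d t,\d z)$ with $X^{n,m}_0 = 0$.

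Now I apply the last part of Theorem \ref{theo-Ito-2} to $X^{n,m}$ with $a_s := A X^{n,m}_s$. The hypothesis $\E N(t,Z) < \infty$ is clearly satisfied after replacing $Z$ by $D_m$, and the dissipativity inequality $\psi_p'(x)(Ax) \le 0$ for $x \in \mathcal{D}(A)$ (Lemma 4.7 in \cite{Zhu2017}, valid because $(e^{tA})_{t\ge 0}$ is a contraction $C_0$-semigroup on the $2$-smooth Banach space $(E,|\cdot|_E)$) gives $\psi_p'(X^{n,m}_s)(a_s) \le 0$ pointwise in $(s,\omega)$. Theorem \ref{theo-Ito-2} therefore yields, uniformly in $n,m$,
\begin{equation*}
\E \sup_{0\le t\le T} |X^{n,m}_t|_E^p \le C_p\!\left[\E\Big(\!\!\int_0^T\!\!\|g^n_s\|^2_{\gamma(H,E)}\d s\Big)^{\!p/2}\!\!+\E\!\int_0^T\!\!\!\int_{D_m}\!\!|\xi^n(s,z)|_E^p\,\nu(\d z)\d s + \E\Big(\!\!\int_0^T\!\!\!\int_{D_m}\!\!|\xi^n(s,z)|_E^2\,\nu(\d z)\d s\Big)^{\!p/2}\right].
\end{equation*}

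Finally, I pass to the limit, first $m \to \infty$ and then $n \to \infty$. On the right-hand side this is monotone/dominated convergence using the pointwise bounds $|\xi^n| \le C|\xi|$ and $\|g^n\|_{\gamma(H,E)} \le C\|g\|_{\gamma(H,E)}$. On the left-hand side, the differences $X^{n,m} - X^{n,m'}$ and $X^n - X^{n'}$ are themselves stochastic convolutions to which Corollary \ref{cor-max-eq-1} (for the jump part) and Burkholder's inequality \eqref{burk-ine} (for the Wiener part) apply, producing a Cauchy sequence in $L^p(\Omega; D([0,T];E))$; its limit is a c\`adl\`ag modification $\bar X$ of $X$ satisfying \eqref{MI-main-1} once the factor $e^{\alpha T}$ is reinstated. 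The main obstacle lies precisely in this coordinated double approximation: at each intermediate stage the process must \emph{simultaneously} be a strong solution of the SPDE (needed to write $a_s = A X^{n,m}_s$ and invoke the sign condition) and have jumps governed by a finite Poisson random measure (needed to legitimately invoke Theorem \ref{theo-Ito-2}); once this is arranged, all estimates are uniform in $n,m$ and the passage to the limit is routine.
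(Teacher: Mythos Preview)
Your proposal is correct and follows essentially the same route as the paper: reduce to the contraction case by rescaling, push the integrands into $\mathcal{D}(A)$ via Yosida approximants, truncate the jump part to sets $D_m$ with $\E N(t,D_m)<\infty$ so that the resulting process is a genuine strong solution with finitely many jumps, and then invoke the special case of Theorem~\ref{theo-Ito-2} (where the $a$-term drops out thanks to $\psi_p'(x)(Ax)\le 0$); the passage to the limit is handled by Corollary~\ref{cor-max-eq-1} for the jump part and the known maximal inequality for Wiener convolutions in $2$-smooth spaces. The paper presents these two approximations as separate reductions rather than a single double-indexed sequence, but the content is the same.
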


	%------------------------
	\begin{proof} Note that \eqref{MI-main-1} is trivially satisfied if the right-hand side is infinite. So let us assume 	\begin{align}\label{RHS-finite}
	\E\big(\int_0^T \|g_s\|^2_{\gamma(H,E)}\d s\big)^{\frac{p}2}+\E\Big(\int_0^T\int_Z |\xi(s,z)|^p_E\;\nu(\d z)\,\d s\Big)
		 +\E \Big(\int_0^T\int_Z|\xi(s,z)|^{2}_E\;\nu(\d z)\,\d s\Big)^{\frac{p}{2}}<\infty.
	\end{align}
	Just as in the proofs of Theorem \ref{Th-MI-mian-1}, we may assume that $\|e^{tA}\|\leq 1$ and show that inequality \eqref{MI-main-1} holds for  $g\in M^2([0,T],\gamma(H;\mathcal{D}(A)))$ and $\xi\in\mathcal{M}^2_T(\mathcal{P}\otimes\mathcal{Z},dt\times\mathbb{P}\times\nu;\mathcal{D}(A))$. Since the Poisson point process $\pi$ is $\sigma$-finite, there exists a sequence of sets $\{D_n\}_{n\in\mathbb{N}}$ such that $\cup_{n\in\mathbb{N}}D_n=Z$ and $\E N(t,D_n)<\infty$ for every $0<t<\infty$ and $n\in\mathbb{N}$.
	
	       	  Let us define a sequence $\{X^n\}_{n\in\N}$ of process $X^n:=(X^n_t)_{t\geq0}$ by
	\begin{align}\label{sec-3-Th-eq1}
	         X^n_t=\int_0^te^{(t-s)A}g_s\, \d W_s+\int_0^t\int_Z1_{D_n}(z) e^{(t-s)A}\xi(s,z)\,\tilde{N}(\d s,\d z),\ \ n\in\mathbb{N}.
	\end{align} 	
	Then (see \cite{Zhu2017} and \cite{[Neid]}) $X^n$ is a strong solution to the following equation:
	\begin{align}
	\d X_t^n=AX_t^n\,\d t+g_t\,\d W_t+\int_{D_n}\xi(t,z)\,\tilde{N}(\d t,\d z).
	\end{align}
	Hence we may apply Theorem \ref{theo-Ito-2} and use the fact that $\psi^\prime(x)(Ax)\leq 0$ for all $x\in D(A)$ to infer	\begin{align*}
	 \E\sup_{t\in[0,T]}|X_t^n|^p_E\leq C_{p}\E\Big(\int_0^T|g_s|^2\,\d s\Big)^{\frac{p}{2}}+C_{p}\E\Big(\int_0^T\int_Z|\xi(s,z)|_E^2\,\nu(\d z)\d s\Big)^{\frac{p}{2}}
				+C_{p}\E\int_0^T\int_Z|\xi(s,z)|^p\,\nu(\d z)\d s.
	\end{align*}
Meanwhile, by inequality \eqref{max-eq-1} and \eqref{RHS-finite} we observe
			\begin{align*}
				   \E\sup_{t\in[0,T]}|X_t^n-X_t|^p_E&=\E\sup_{t\in[0,T]}\Big|\int_0^T\int_Z e^{(t-s)A}(\xi(s,z)1_{D_n}-\xi(s,z))\,\tilde{N}(\d s,\d z)\Big|_E^p\\
				                        &\leq C_p\E\Big(\int_0^T\int_Z|\xi(s,z)1_{D_n}-\xi(s,z)|^2_E\,\nu(\d z)\d s\Big)^{\frac{p}{2}}\\
				                        &\hspace{1cm}+C_p\E\Big(\int_0^T\int_Z|\xi(s,z)1_{D_n}-\xi(s,z)|^p_E\,\nu(\d z)\d s\Big)\\
				&\rightarrow0,\ \ \text{as }n\rightarrow\infty.
				\end{align*}
			Note that here the constant $C_p$ is independent of $n$.
				Therefore, an argument similar to that in the proof of Theorem \ref{Th-MI-mian-1} shows that there exists a c\`{a}dl\`ag modification $\bar{X}$ of $X$ such that
				\begin{align*}
				 \E\sup_{t\in[0,T]}|\bar{X}_t|^p_E\leq C_{p}\E\Big(\int_0^T|g_s|^2\,\d s\Big)^{\frac{p}{2}}+C_{p}\E\Big(\int_0^T\int_Z|\xi(s,z)|_E^2\,\nu(\d z)\d s\Big)^{\frac{p}{2}}
				+C_{p}\E\int_0^T\int_Z|\xi(s,z)|^p\,\nu(\d z)\d s.
				\end{align*}   	
	\end{proof}
%-----------------Remark-----------------------------

\section{Application to stochastic 2D quasi-geostrophic equations}\label{SNSE}
We consider the following stochastic quasi-geostrophic equation in $\R^2$,
 \begin{align}
 \begin{split}\label{eq-quasi-geo}
& \d \theta(t)+\big[(v(t)\cdot \nabla )\theta(t)-\Delta\theta(t)\big]\d t= \int_Z\xi(t,z)\tilde{N}(\d t,\d z),\quad t>0,\\
& \theta(0)=\theta_0.
\end{split}
 \end{align}
  Here $\theta:\R^+\times\R^2\rightarrow\R$ denotes the temperature, $v:\R^2\rightarrow \R^2$ is the 2D velocity field and $\theta_0\in L^2(\mathbb{R}^2,\mathbb{R})$. We refer to \cite{Brez+Mot,LRZ,RZZ} for the background and more references on this model. Let $\psi :\R^2\rightarrow \R$ be the stream function which satisfies
  \begin{align*}
  (-\Delta)^{\frac12}\psi=-\theta.
   \end{align*}
  The velocity is expressed in terms of the stream function by
  \begin{align*}
  v=\rm{curl}~ \psi.
  \end{align*}
Clearly, the velocity $v$ can be represented in terms of the temperature by
  \begin{align*}
 v=\mathcal{R}\theta=(-\mathcal{R}_2\theta,\mathcal{R}_1\theta),
   \end{align*}
  where $\mathcal{R}_j\theta=\mathcal{F}^{-1}\Big[\frac{\xi_j}{|\xi|}\mathcal{F}\theta\Big]$, $j=1,2$ is the $j$-th Riesz transform.

  \begin{rem}
  Note that since the operator $-(-\Delta)^{\frac{\alpha}2}$ is also a generator of a $C_0$ contraction semigroup on $L^p(\R^2)$ for $p\in[1,\infty)$ (see e.g. \cite{Kwasnicki, Ma}), our results could be generalized to a fractionally dissipative quasi-geostrophic equation for some $\alpha \in (0,2)$ in a similar way. It will be further investigated in future works.
  \end{rem}
  %-----------------------
  Let $C^{\infty}_c(\R^2;\R^{d})$ be  the space of all infinitely differential functions $\phi:\R^{2}\rightarrow\R^{d}$ with compact support contained in $\R^2$.
For $m \in \mathbb{N}$ and $p \in [1,\infty]$,  denote by $W^{m,p}(\R^2)$ the Banach space of all elements of $L^p(\R^2;\R)$ whose weak partial derivatives up to order $m$ belong to $L^p(\R^2;\R)$ as well. The space $W^{1,2}(\R^2)$ will be denoted by $H^1(\R^2)$.
 It is well known that the set $C^{\infty}_c(\R^2;\R)$ is dense in the space $W^{m,p}(\R^2)$ if $p\in [1,\infty)$.   Let $\mathcal{V}$ denotes the space of all $v\in C_c^{\infty}(\R^2;\R^{2})$ such that $\text{div }v=0$.
We will denote by $L^p_{sol}(\R^2;\R^{2})$ the closure of $\mathcal{V}$ in $L^p(\R^2;\R^2)$.

  %---------

  %---------------
  Now let $b:C_c^{\infty}(\R^2;\R^{2})\times C_c^{\infty}(\R^2;\R)\times C_c^{\infty}(\R^2;\R) \rightarrow \R$ be a trilinear form defined by
    \begin{align}\label{eq-b-0}
            b(v,\phi,\eta)&:
            =\int_{\R^2} (v\cdot \nabla) \phi \,\eta\; \d x
            =\sum_{i=1}^2\int_{\R^2}v_iD^i\phi\,\eta\; \d x,
  \end{align}
  whenever
 $v \in C_c^{\infty}(\R^2;\R^{2})$ and $\phi,\eta \in C_c^{\infty}(\R^2;\R)$ such that the integral on
the right-hand side exists.

 If $\textrm{div}\,v=0$, then we have
  \begin{align*}
        \int_{\R^2} (v\cdot \nabla) \phi\,\eta\; \d x=-\int_{\R^2} (v\cdot \nabla) \eta\,\phi \;\d x.
  \end{align*}
  This gives that
  \begin{align}\label{eq-b-1}
            b(v,\phi,\eta)=-b(v,\eta,\phi).
  \end{align}
  So we infer
   \begin{align}\label{eq-b-2}
            b(v,\phi,\phi)=0.
  \end{align}
 Using the H\"older inequality and equality \eqref{eq-b-1} we can deduce  the following estimate:
\begin{equation}\label{eqn:4.00}
\vert b(v,\phi, \eta)\vert= \vert b(v,\eta,\phi)\vert \leq \vert v\vert_{L^4}   \vert \phi \vert_{L^4}\vert \nabla \eta\vert
_{L^2}.
\end{equation}
Hence $b$ can be extended to a trilinear continuous form on $L^4_{sol}(\R^2;\R^{2})\times L^4(\R^2;\R)\times H^1(\R^2)$.
Define a bilinear map $B$ by $B(v,\phi):=b(v,\phi,\cdot)$. It follows from \eqref{eqn:4.00} that
\begin{align*}
       B:L^4_{sol}(\R^2;\R^{2})\times L^4(\R^2;\R)\rightarrow H^{-1}(\R^2)
\end{align*}
and it satisfies that
\begin{align}\label{eq1-operator-B}
        \|B(v,\phi)\|_{H^{-1}(\R^2)}\leq  \vert v\vert_{L^4}   \vert \phi \vert_{L^4},\quad v\in L^4_{sol}(\R^2;\R^{2}),\;\phi\in L^4(\R^2;\R).
\end{align}
Here $H^{-1}(\R^2)$ is defined as the dual space of $H^1(\R^2)$.
Also we have for $v\in L^4_{sol}(\R^2;\R^{2})$, $\phi\in L^4(\R^2;\R)$, and $w\in H^{1}(\R^2)$,
\begin{align*}
 \langle B(v,\phi),w\rangle =b(v,\phi,w).
\end{align*}
Hence by \eqref{eq-b-2},
\begin{align*}
 \langle B(v,\phi),\phi\rangle =b(v,\phi,\phi)=0, \quad v\in L^4_{sol}(\R^2;\R^{2}),\;\phi\in L^4(\R^2;\R).
\end{align*}
 \begin{rem} Observe that $\mathcal{R}$ is a linear continuous operator such that for all $1<p<\infty$, $\mathcal{R}:L^p(\R^2;\R)\rightarrow L^p_{sol}(\R^2;\R^{2})$ (see \cite{Stein}). Clearly, for all $\theta \in L^p(\R^2;\R)$, applying the Fourier transform gives that
      \begin{align*}
           \mathcal{F}[\rm{div}\, \mathcal{R}\theta](\xi)&=\mathcal{F}[D_1(-\mathcal{R}_2\theta)+D_2(\mathcal{R}_1\theta)](\xi)\\
           &=-i\xi_1\mathcal{F}(\mathcal{R}_2\theta)+i\xi_2\mathcal{F}(\mathcal{R}_1\theta)\\
           &=-i\xi_1\frac{\xi_2}{|\xi|}\hat{\theta}+i\xi_2\frac{\xi_1}{|\xi|}\hat{\theta}\\
           &=i[-\xi_1\xi_2+\xi_2\xi_1]\frac{1}{|\xi|}\hat{\theta}=0.
      \end{align*}
Thus we have $\rm{div}\, \mathcal{R}\theta=0$.
  \end{rem}

  Let us define the operator $A:D(A)\subset L^4(\R^2;\R)\rightarrow L^4(\R^2;\R)$ by
  \begin{align*}
       &D(A)=W^{2,4}(\R^2)\cap W^{1,4}(\R^2),\\
       &Au=-\Delta u,\quad u\in D(A).
  \end{align*}
  Then \eqref{eq-quasi-geo} can be rewritten as
  \begin{align}
   \begin{split}\label{eq-quasi-geo-ab}
        \d\theta(t)&=-[A\theta(t)+B(\mathcal{R}\theta(t),\theta(t))]\d t+ \int_Z\xi(t,z)\tilde{N}(\d t,\d z),\\
        \theta(0)&=\theta_0.
        \end{split}
  \end{align}
  %------------------------
   \begin{rem}Note that Equation \eqref{eq-quasi-geo-ab} is closely related to the Navier-Stokes equation
    \begin{align}
   \begin{split}\label{eq-NSE}
        \d u(t)&=-[A_1u(t)+B(u(t),u(t))]\d t+ \int_Z\xi(t,z)\tilde{N}(\d t,\d z),\\
        u(0)&=u_0
        \end{split}
  \end{align}
  in the following sense: both equations have the same linear part $\Delta \theta$ and $\Delta u$ and the nonlinearities satisfy the cancellation property $\langle B(\mathcal{R}u,v),v\rangle=0$ and $\langle B(u,v),v\rangle=0$ respectively. For the Navier-Stokes equation,  $\mathcal{R}$ is replaced by $I$. The main difference between these two equations is that for quasi-geostrophic equation the operator $-A$ generates a contraction semigroup on $L^p$ for $1\leq p<\infty$, however, for the Navier-Stokes equation, the negative Stokes operator $-A_1$ generates a contraction semigroup only for $p=2$.
    \end{rem}

  \begin{lem}\label{B-coer}
There exists a constant $C$ such that for all $T>0$ and all $\theta_1,\theta_2\in L^4(0,T; L^4(\R^2;\R))$, the following inequality holds:
  \begin{align*}
       |B(\mathcal{R}\theta_1,\theta_1)-B(\mathcal{R}\theta_2,\theta_2)|_{L^2(0,T;H^{-1}(\R^2))}\leq C(|\theta_1|_{L^4(0,T;L^4(\R^2;\R))}+|\theta_2|_{L^4(0,T;L^4(\R^2;\R))} )|\theta_1-\theta_2|_{L^4(0,T;L^4(\R^2;\R))}.
  \end{align*}
  \end{lem}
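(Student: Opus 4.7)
The plan is to reduce the nonlinear difference to pointwise-in-time estimates via bilinearity of $B$, then integrate using H\"older's inequality.

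First I would exploit bilinearity (together with linearity of $\mathcal{R}$) to write
\[
B(\mathcal{R}\theta_1,\theta_1)-B(\mathcal{R}\theta_2,\theta_2)
= B(\mathcal{R}\theta_1,\theta_1-\theta_2)+B(\mathcal{R}(\theta_1-\theta_2),\theta_2).
\]
Next, for a.e.\ $t\in[0,T]$, I would apply the estimate \eqref{eq1-operator-B} to each of the two terms on the right-hand side, combined with the continuity of the Riesz transform $\mathcal{R}\colon L^4(\mathbb{R}^2;\mathbb{R})\to L^4_{\mathrm{sol}}(\mathbb{R}^2;\mathbb{R}^2)$, which yields a constant $C>0$ such that $|\mathcal{R}\phi|_{L^4}\leq C|\phi|_{L^4}$ for all $\phi\in L^4(\mathbb{R}^2;\mathbb{R})$. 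This produces
\[
\|B(\mathcal{R}\theta_1(t),\theta_1(t))-B(\mathcal{R}\theta_2(t),\theta_2(t))\|_{H^{-1}(\mathbb{R}^2)}
\leq C\bigl(|\theta_1(t)|_{L^4}+|\theta_2(t)|_{L^4}\bigr)|\theta_1(t)-\theta_2(t)|_{L^4}.
\]

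Squaring and integrating over $[0,T]$, and then applying the Cauchy--Schwarz inequality in time, I would obtain
\[
\int_0^T \|B(\mathcal{R}\theta_1,\theta_1)-B(\mathcal{R}\theta_2,\theta_2)\|_{H^{-1}}^2\,\d t
\leq C\!\left(\int_0^T(|\theta_1|_{L^4}+|\theta_2|_{L^4})^4\d t\right)^{\!1/2}\!\!\left(\int_0^T|\theta_1-\theta_2|_{L^4}^4\d t\right)^{\!1/2}.
\]
Taking square roots and using $(a+b)^{1/2}\leq a^{1/2}+b^{1/2}$ inside the first factor yields the claimed bound.

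There is no genuine obstacle here: all the ingredients, namely the continuity estimate \eqref{eq1-operator-B} for $B$ on $L^4_{\mathrm{sol}}\times L^4$, the $L^p$-boundedness of the Riesz transform for $p=4$ (which is classical, see Stein \cite{Stein}), and H\"older's inequality, are standard. The mild subtlety is simply to make sure that one splits the difference so that the factor $\theta_1-\theta_2$ appears once in each term, and that the time integrability exponent $L^4$ in both the spatial and temporal direction matches the target $L^2$ norm via Cauchy--Schwarz ($2\cdot 2 = 4$).
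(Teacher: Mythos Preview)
Your proof is correct and follows essentially the same approach as the paper: the bilinear splitting $B(\mathcal{R}\theta_1,\theta_1)-B(\mathcal{R}\theta_2,\theta_2)=B(\mathcal{R}\theta_1,\theta_1-\theta_2)+B(\mathcal{R}(\theta_1-\theta_2),\theta_2)$, the pointwise $H^{-1}$ bound via \eqref{eq1-operator-B} combined with $L^4$-boundedness of the Riesz transform, and then Cauchy--Schwarz in time. The only cosmetic remark is that your final step is really the triangle inequality in $L^4(0,T)$ (i.e.\ Minkowski) rather than $(a+b)^{1/2}\le a^{1/2}+b^{1/2}$, but this does not affect the argument.
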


  \begin{proof}
  By \eqref{eq1-operator-B} and the fact that Riesz transforms are bounded on $L^p$ for any $p\in(1,\infty)$ we obtain
     \begin{align*}
  |B(\mathcal{R}\theta,\phi)|_{H^{-1}(\R^2)}\leq  \vert \mathcal{R}\theta\vert_{L^4(\R^{2};\R^{2})}\vert \phi \vert_{L^4(\R^{2};\R)}\leq C \vert \theta\vert_{L^4(\R^{2};\R)}  \vert \phi \vert_{L^4(\R^{2};\R)},\quad \theta,\phi\in L^4(\R^2;\R).
    \end{align*}

  Then we infer
       \begin{align*}
   \int_0^T |B(\mathcal{R}\theta(t),\theta(t))|^2_{H^{-1}(\R^2)} \d t\leq C\int_0^T|\theta(t)|_{L^4}^4 \d t.
    \end{align*}
      %-----------------------
    If $\theta_1,\theta_2\in L^4(0,T;L^4(\R^2;\R))$ and $w\in H^{1}(\R^2)$ we have
    \begin{align*}
       \langle  B(\mathcal{R}\theta_1,\theta_1)-B(\mathcal{R}\theta_2,\theta_2),w\rangle&=b(\mathcal{R}\theta_1,\theta_1,w)-b(\mathcal{R}\theta_2,\theta_2,w)\\
       &=b(\mathcal{R}\theta_1,\theta_1-\theta_2,w)+b(\mathcal{R}\theta_1-\mathcal{R}\theta_2,\theta_2,w)\\
       &=\langle B(\mathcal{R}\theta_1,\theta_1-\theta_2),w\rangle+\langle B(\mathcal{R}(\theta_1-\theta_2),\theta_2),w\rangle.
    \end{align*}
    It follows that
     \begin{align*}
     |B(\mathcal{R}\theta_1,\theta_1)-B(\mathcal{R}\theta_2,\theta_2)|_{H^{-1}(\R^2)}\leq C(|\theta_1|_{L^4}+|\theta_2|_{L^4})|\theta_1-\theta_2|_{L^4}.
    \end{align*}
    Hence we have for $\theta_1,\theta_2\in L^4(0,T; L^4(\R^2;\R))$,
    \begin{align*}
         & \int_0^T   |B(\mathcal{R}\theta_1(t),\theta_1(t))-B(\mathcal{R}\theta_2(t),\theta_2(t))|_{H^{-1}(\R^2)}^2 \d t \\
          &\hspace{1cm}\leq 4C\Big(\int_0^T|\theta_1(t)-\theta_2(t)|^4_{L^4} \d t\Big)^{\frac12}\Big(\int_0^T|\theta_1(t)|^4_{L^4}+|\theta_2(t)|^4_{L^4} \d t\Big)^{\frac12}.
    \end{align*}
 \end{proof}

 Define a process $Z$ by
  \[Z(t)=\int_0^te^{-(t-s)A}\xi(s,z) \tilde{N}(\d s,\d z), \;\; t\geq 0.\]
  Equivalently, $Z$ is the unique solution to the following stochastic Langevin equation (see \cite[Lemma 3.2]{Zhu2017}):
  \begin{align*}
  & \d Z=-AZ\d t+\int_Z\xi(t,z)\tilde{N}(\d t,\d z),\\
  &Z(0)=0.
  \end{align*}
Let us fix a number $s\in (0,\frac12)$.
 \begin{assu}\label{4.1}
 Assume that $\xi:[0,T]\times\Omega\times Z\rightarrow W^{-s,4}(\R^2)$ is a predictable process which satisfies
 \begin{align}
 \E\int_0^T\int_Z|\xi(s,z)|^2_{W^{-s,4}(\R^2)}\nu(\d z)\d s<\infty.
\end{align}
 \end{assu}
For the definition of the spaces $W^{s,p}(\R^2)$ see e.g. \cite{Triebel}. We aim to look for a solution of \eqref{eq-quasi-geo-ab} with the following form
  \begin{align}\label{eq-2}
  \theta(t)=e^{-t A}\theta_{0}+Y(t)+Z(t).
  \end{align}
   For this purpose we shall first prove the following result.
   \begin{prop}Under Assumption \ref{4.1}, we have for $T>0$, $Z\in L^{4}(0,T;L^{4}(\R^{2};\R))$ $\mathbb{P}$-a.s., that is,
   \begin{align}\label{Z-in-L4}
        \int_0^T |Z(t)|^4_{L^4}\,\d t<\infty,  \quad\mathbb{P}\text{-a.s.}
   \end{align}
   \end{prop}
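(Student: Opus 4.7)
The plan is to apply the Da Prato--Kwapie\'n--Zabczyk factorization method, exploiting the smoothing of the heat semigroup from $W^{-s,4}(\R^2)$ into $L^4(\R^2)$ to compensate for the negative regularity of $\xi$. I would first fix a factorization parameter $\alpha\in\bigl(\tfrac14,\tfrac{1-s}{2}\bigr)$, which is nonempty precisely because $s<\tfrac12$. Using the identity $\int_u^t(t-r)^{\alpha-1}(r-u)^{-\alpha}\,\d r=\pi/\sin(\pi\alpha)$ together with the semigroup law and a stochastic Fubini theorem for Poisson integrals, I would rewrite
\[
Z(t)=\frac{\sin(\pi\alpha)}{\pi}\int_0^t(t-r)^{\alpha-1}e^{-(t-r)A}Y_\alpha(r)\,\d r,\qquad Y_\alpha(r)=\int_0^r\!\!\int_Z (r-u)^{-\alpha}e^{-(r-u)A}\xi(u,z)\,\tilde N(\d u,\d z).
\]

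Next I would control $Y_\alpha$ in $L^2(\Omega\times(0,T);L^4(\R^2))$. Since $L^4(\R^2)$ is of martingale type $2$, Proposition~\ref{prop-main-1}(1) with $p=r=2$ in $E=L^4(\R^2)$ gives $\E\|Y_\alpha(r)\|_{L^4}^2 \leq C\,\E\int_0^r\int_Z(r-u)^{-2\alpha}\|e^{-(r-u)A}\xi(u,z)\|_{L^4}^2\,\nu(\d z)\,\d u$. Combining this with the standard smoothing estimate $\|e^{-\tau A}g\|_{L^4(\R^2)}\leq C\tau^{-s/2}\|g\|_{W^{-s,4}(\R^2)}$ and then integrating in $r$ yields
\[
\E\int_0^T\|Y_\alpha(r)\|_{L^4}^2\,\d r\leq C_{T,\alpha,s}\,\E\int_0^T\!\!\int_Z\|\xi(u,z)\|_{W^{-s,4}}^2\,\nu(\d z)\,\d u,
\]
where the inner integral $\int_u^T(r-u)^{-(2\alpha+s)}\,\d r$ is finite thanks to $2\alpha+s<1$; the right-hand side is finite by Assumption~\ref{4.1}.

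Finally I would pass from $Y_\alpha$ to $Z$. Since $e^{-\tau A}$ is a contraction on $L^4(\R^2)$, we have the pathwise bound $\|Z(t)\|_{L^4}\leq C\int_0^t(t-r)^{\alpha-1}\|Y_\alpha(r)\|_{L^4}\,\d r$. The condition $\alpha>\tfrac14$ places the kernel $r\mapsto r^{\alpha-1}$ in $L^{4/3}(0,T)$, so Young's convolution inequality with exponents $4/3$ and $2$ (note $1+\tfrac14=\tfrac34+\tfrac12$) yields pathwise
\[
\Bigl(\int_0^T\|Z(t)\|_{L^4}^4\,\d t\Bigr)^{\!1/4}\leq C_\alpha\Bigl(\int_0^T\|Y_\alpha(r)\|_{L^4}^2\,\d r\Bigr)^{\!1/2}.
\]
Squaring, taking expectations, and invoking the previous step gives $\E\bigl(\int_0^T\|Z(t)\|_{L^4}^4\,\d t\bigr)^{1/2}<\infty$, which implies \eqref{Z-in-L4}.

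The main obstacle I anticipate is rigorously justifying the stochastic Fubini step for the Poisson integral at the Banach-space level: one must verify the joint $\mathcal{P}\otimes\mathcal{Z}\otimes\mathcal{B}((u,T))$-measurability and the $\mathcal{M}_T^2$-integrability needed to legitimately interchange the time integration with the stochastic integration against $\tilde N$. Once that is in place, the remaining ingredients---BDG in the martingale-type-$2$ space $L^4(\R^2)$, semigroup smoothing from $W^{-s,4}$ to $L^4$, and Young's inequality---are essentially mechanical given the material developed earlier in the paper.
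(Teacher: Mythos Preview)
Your argument via the Da Prato--Kwapie\'n--Zabczyk factorization is correct, and the numerology works: the window $\alpha\in(\tfrac14,\tfrac{1-s}{2})$ is nonempty exactly because $s<\tfrac12$, the BDG bound in $L^4(\R^2)$ with $p=r=2$ together with the heat smoothing $\|e^{-\tau A}g\|_{L^4}\le C\tau^{-s/2}\|g\|_{W^{-s,4}}$ yields $Y_\alpha\in L^2(\Omega\times(0,T);L^4)$ since $2\alpha+s<1$, and Young's inequality with exponents $(4/3,2,4)$ then gives $Z\in L^4(0,T;L^4)$ pathwise. The stochastic Fubini step is indeed the only point requiring care; versions adequate for compensated Poisson integrals in martingale-type-$2$ spaces are available, so this is a genuine but surmountable technicality.

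The paper, however, takes a different and arguably more thematic route. Instead of factorization, it uses the Gagliardo--Nirenberg interpolation
\[
|Z(t)|_{L^4}^4\le C\,|Z(t)|_{W^{s,4}}^2\,|Z(t)|_{W^{-s,4}}^2,
\]
so that $\int_0^T|Z(t)|_{L^4}^4\,\d t\le C\sup_{t\le T}|Z(t)|_{W^{-s,4}}^2\cdot\int_0^T|Z(t)|_{W^{s,4}}^2\,\d t$. The supremum factor is controlled directly by the paper's own maximal inequality \eqref{Lp-estimates-1} in $E=W^{-s,4}(\R^2)$, and the time-integrated $W^{s,4}$ factor is handled by the smoothing estimate for stochastic convolutions from \cite{[Brz-Hau]}, which gains one derivative at the cost of passing from $W^{-s,4}$-data to $W^{s-1,4}$-data (finite precisely when $s\le\tfrac12$). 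The payoff of the paper's approach is that it showcases the maximal inequalities developed in Section~\ref{sec-max-ineq} and avoids the stochastic Fubini theorem entirely; your approach is more self-contained relative to classical tools but does not make use of the paper's main results.
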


  \begin{proof}
By the Gagliardo-Nirenberg interpolation inequality (see e.g. \cite[Proposition 5.6 and Remark 5.8]{Brezis+Miron}) we have for $0<s<\infty$\begin{align*}
|Z|_{L^4}^4\leq C|Z|_{W^{s,4}(\R^2)}^2|Z|_{W^{-s,4}(\R^2)}^2.
\end{align*}
It follows that
\begin{align*}
       \int_0^T |Z(t)|^4_{L^4} d t&\leq C\int_0^T |Z(t)|^2_{W^{s,4}(\R^2)}|Z(t)|^2_{W^{-s,4}(\R^2)}\d t \\
       &\leq C\sup_{0\leq t\leq T}|Z(t)|^2_{W^{-s,4}(\R^2)}\int_0^T|Z(t)|^2_{W^{s,4}(\R^2)}\d t.
\end{align*}
Since the space $E=W^{-s,4}(\R^2)$ is of martingale type $2$ (see \cite{Brz1}) and $-A$ generates a contraction semigroup in $E$, we may apply inequality \eqref{Lp-estimates-1} to get
\begin{align}\label{sc-eq-1}
\E \sup_{0\leq t\leq T}|Z(t)|^2_{W^{-s,4}(\R^2)}\leq C\E\int_0^T|\xi(t,z)|^2_{W^{-s,4}(\R^2)}\nu(\d z)\d t<\infty,
\end{align}
which also gives a $W^{-s,4}(\R^{2};\R)$-valued c\`{a}dl\`ag modification of $Z(t)$.

On the other hand because $W^{s,4}(\R^2;\R)$ is also of martingale type $2$, by using inequality (1.7) from \cite{[Brz-Hau]} we have
\begin{align}
 \begin{split}\label{sc-eq-2}
    \E \int_0^T|Z(t)|^2_{W^{s,4}(\R^2)}\d t&=\E \int_0^T|A^{\frac{s}2}Z(t)|^2_{L^4(\R^{2};\R)}\d t\\
    &=\E \int_0^T|Z(t)|^2_{D(A^{\frac{s}2})}\d t\\
    &\leq \E \int_0^T\int_Z |\xi(t,z)|^2_{D(A^{\frac{s}2-\frac12})}\nu(\d z)\d t\\
    &=\E \int_0^T\int_Z |\xi(t,z)|^2_{W^{s-1,4}(\R^2)}\nu(\d z)\d t\\
    &\leq \E\int_{0}^{T}\int_{Z}|\xi(t,z)|^{2}_{W^{-s,4}}\nu(\d z)\d t<\infty,
    \end{split}
\end{align}
provided $-s\geq s-1$, i.e. $s\leq \frac12$.
 \end{proof}

  \begin{rem}
 By the classical Gagliardo-Nirenberg inequality \cite[Inequality (2.2)]{Niren}, we infer that
\[
|Z|^4_{L^4(0,T;L^4)}\leq |Z|^2_{L^{\infty}(0,T;L^{2})}|Z|^2_{L^2(0,T;H^1(\R^2))}.
\]
 So one sufficient condition for \eqref{Z-in-L4} is
  \begin{align*}
       \E\int_0^T |Z(t)|^2_{H^1(\R^2)}\d t<\infty
  \end{align*}
  and
   \begin{align*}
       \E\sup_{0\leq t\leq T} |Z(t)|^2_{L^2}\d t<\infty.
  \end{align*}
  \end{rem}

 Notice that \eqref{eq-quasi-geo-ab} can be rewritten as
  \begin{align}\label{eq-Y}
       \begin{split}
&  \d Y+AY\d t+B(\mathcal{R}(Y+Z),Y+Z)\d t=0,\\
 & Y(0)=Y_0.
      \end{split}
  \end{align}
  %where $Y_0=\theta_0$.
  %-------------------------------
  \begin{prop}
Assume that $Z(\cdot)\in L^4(0,T;L^4(\R^2;\R))$. Then for every $Y_0 \in L^2(\R^2;\R)$, there exists a unique solution $Y$  to the equation (\ref{eq-Y})
  and it satisfies
\begin{enumerate}
\item[]
  \begin{align}\label{Y-L-4}
Y(\cdot)\in L^4(0,T,L^4(\R^2;\R)),
 \end{align}
\item[]
 \begin{align}\label{Y-L-2-H-V}
 Y(\cdot)\in L^{\infty}(0,T;L^2(\R^2;\R))\cap L^2(0,T;H^1(\R^2)).
  \end{align}
\end{enumerate}
Moreover, we have
\begin{align}\label{eqn-QGEs-apriori-1}
\sup_{t\in[0,T]}
      |Y(t)|^2_{L^{2}}\leq e^{C_1 \int_0^T|Z(t)|^4_{L^4}\d t}|Y_0|^2_{L^2}+C_2\int_0^Te^{C_1\int_s^T|Z(r)|^4_{L^4}\d r}|Z(s)|^4_{L^4}\d s
\end{align}
and
\begin{align}\label{eqn-QGEs-apriori-2}
   \int_0^T |\nabla Y(t)|^2_{L^2}\d t
 \leq |Y_0|^2_{L^2}+C_1\sup_{t\in[0,T]} |Y(t)|^2_{L^2}\int_0^T|Z(t)|^4_{L^4}\d t+ C_2\int_0^T |Z(t)|^2_{L^4}\d t.
 \end{align}

\end{prop}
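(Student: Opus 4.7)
The plan is to treat \eqref{eq-Y} as a deterministic parabolic PDE on $[0,T]$ with fixed data $Z\in L^4(0,T;L^4(\R^2;\R))$ and $Y_0\in L^2(\R^2;\R)$, and to construct the solution via a Faedo-Galerkin approximation scheme: project \eqref{eq-Y} onto a finite-dimensional subspace of $L^2(\R^2;\R)$ (e.g.\ by truncating in Fourier), solve the resulting ODE locally in time, and extend it globally using the a priori estimates described below.

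The core energy estimate is obtained by testing \eqref{eq-Y} against $Y$ in $L^2$ and expanding
\begin{align*}
B(\mathcal{R}(Y+Z),Y+Z)=B(\mathcal{R}Y,Y)+B(\mathcal{R}Y,Z)+B(\mathcal{R}Z,Y)+B(\mathcal{R}Z,Z).
\end{align*}
Since $\mathcal{R}Y$ and $\mathcal{R}Z$ are divergence-free, the cancellation \eqref{eq-b-2} kills $\langle B(\mathcal{R}Y,Y),Y\rangle$ and $\langle B(\mathcal{R}Z,Y),Y\rangle$, leaving
\begin{align*}
\tfrac{1}{2}\tfrac{\d}{\d t}|Y|^2_{L^2}+|\nabla Y|^2_{L^2}=-\langle B(\mathcal{R}Y,Z),Y\rangle-\langle B(\mathcal{R}Z,Z),Y\rangle.
\end{align*}
Using the antisymmetry \eqref{eq-b-1}, H\"older's inequality, the $L^p$-boundedness of the Riesz transform, and the two-dimensional Gagliardo-Nirenberg inequality $|u|^2_{L^4}\leq C|u|_{L^2}|\nabla u|_{L^2}$, I will bound the right-hand side by $\tfrac{1}{2}|\nabla Y|^2_{L^2}+C_1|Y|^2_{L^2}|Z|^4_{L^4}+C_2|Z|^4_{L^4}$, absorb the gradient term on the left, and apply Gr\"onwall's inequality to obtain \eqref{eqn-QGEs-apriori-1}. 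Integrating the remaining differential inequality in $t$ yields \eqref{eqn-QGEs-apriori-2}, and one more application of Gagliardo-Nirenberg ($\int_0^T|Y|^4_{L^4}\d t\leq C\sup_{t\leq T}|Y|^2_{L^2}\int_0^T|\nabla Y|^2_{L^2}\d t$) delivers \eqref{Y-L-4}.

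To pass to the limit, I would combine the uniform bounds on the Galerkin approximations $Y^n$ in $L^\infty(L^2)\cap L^2(H^1)\cap L^4(L^4)$ with a bound on $\partial_t Y^n$ in, say, $L^{4/3}(0,T;H^{-1})$ read off from the equation and Lemma \ref{B-coer}, and then apply an Aubin-Lions type compactness argument together with a spatial tail estimate to extract a subsequence converging strongly in a topology sufficient to identify the quadratic limit $B(\mathcal{R}(Y+Z),Y+Z)$. For uniqueness, setting $W=Y_1-Y_2$ and testing the difference of the equations against $W$, Lemma \ref{B-coer} combined with the 2D Gagliardo-Nirenberg inequality produces an estimate of the form $\tfrac{\d}{\d t}|W|^2_{L^2}+|\nabla W|^2_{L^2}\leq \varphi(t)|W|^2_{L^2}$ with $\varphi\in L^1(0,T)$ depending on $|Y_1|^4_{L^4}+|Y_2|^4_{L^4}+|Z|^4_{L^4}$, so Gr\"onwall forces $W\equiv 0$.

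The main obstacle is the lack of compact Sobolev embeddings on the unbounded domain $\R^2$ (e.g.\ $H^1(\R^2)\hookrightarrow L^4(\R^2)$ is \emph{not} compact), so the standard Aubin-Lions argument does not apply verbatim; one must either localize to bounded subdomains with a careful cutoff procedure plus uniform spatial tail control, or exploit the parabolic smoothing of $e^{-tA}$ to obtain enough extra regularity to make the convergence of $B(\mathcal{R}(Y^n+Z),Y^n+Z)$ rigorous. This is the step that requires the most care.
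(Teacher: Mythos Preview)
Your energy computation is essentially identical to the paper's: test against $Y$, use the cancellation $b(v,\phi,\phi)=0$ to kill two of the four cross-terms, then combine \eqref{eqn:4.00}, the $L^4$-boundedness of $\mathcal{R}$, the Ladyzhenskaya inequality, and Young's inequality to absorb $|\nabla Y|^2_{L^2}$ and feed the remainder into Gr\"onwall. The estimates \eqref{eqn-QGEs-apriori-1}--\eqref{eqn-QGEs-apriori-2} and hence \eqref{Y-L-4}--\eqref{Y-L-2-H-V} are obtained exactly as you describe.

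Where you diverge from the paper is in the \emph{existence mechanism}. You propose a Faedo--Galerkin scheme plus Aubin--Lions compactness, and you rightly flag that the lack of compact Sobolev embeddings on $\R^2$ makes the limit passage delicate (requiring cutoff/tail control or extra parabolic regularity). The paper sidesteps this difficulty entirely: it invokes Lemma~\ref{B-coer} to see that $\theta\mapsto B(\mathcal{R}\theta,\theta)$ is locally Lipschitz from $L^4(0,T;L^4)$ to $L^2(0,T;H^{-1})$, and then applies a Banach fixed-point argument in the mild formulation (as in \cite[Theorem~15.2.5]{Da+Zab-1996}) to produce a unique local solution on some $[0,T_1]$. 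The a~priori bound \eqref{eqn-QGEs-apriori-1} then shows $|Y(t)|_{L^2}$ stays bounded by a quantity depending only on $Y_0$ and $\int_0^T|Z|^4_{L^4}$, so the local solution cannot blow up and a standard continuation argument gives $T_1=T$. This approach is shorter and avoids any compactness altogether; your route is also viable but carries precisely the extra burden you identified.
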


%-------------
\begin{proof}
Let us put
\[
\rho:=|Y_0|^2_{L^2}\, e^{C_1 \int_0^T|Z(t)|^4_{L^4}\d t}+C_2\int_0^Te^{C_1\int_t^T|Z(s)|^4_{L^4}\d s}|Z(t)|^4_{L^4}\d t.
\]
Note that   $\vert Y_0 \vert_{L^2}^2\leq  \rho$.
Because of Lemma \ref{B-coer},  applying similar arguments as in \cite[Theorem 15.2.5]{Da+Zab-1996} and then the Banach fixed point theorem yields the existence of some $T_1\in (0,T]$ and
 unique local solution in $L^4(0,T_1;L^4(\R^2;\R))$ to \eqref{eq-Y}. Below, we will show that  this solution could be extended to the whole interval $[0,T]$.

For this aim, by applying Lemma III.1.2 in \cite{Temam_2001} we have for $t \in [0,T_1]$,
\begin{align*}
   \frac12 \frac{\d}{\d t}|Y(t)|^2_{L^{2}}&=\langle Y(t), \frac{\d}{\d t}Y(t)\rangle_{L^{2}}\\
   &=-\langle Y(t), AY(t)\rangle +\langle B(\mathcal{R}(Y(t)+Z(t)),Y(t)+Z(t)),Y(t) \rangle\\
   &=-\int_{\R^{2}}\|\nabla Y(t)  \|^2\d x+\langle B(\mathcal{R}(Y(t)+Z(t)),Y(t)+Z(t)),Y(t)\rangle\\
   &=-|\nabla Y(t)|^2_{L^2}-b\big(\mathcal{R}(Y(t)),Y(t),Z(t)\big)-b\big(\mathcal{R}(Z(t)),Y(t),Z(t)\big).
\end{align*}
Then by \eqref{eqn:4.00} and the boundedness of the operator $\mathcal{R}$ on $L^{4}$ we obtain that
\begin{align*}
& |b(\mathcal{R}(Y(t)),Y(t),Z(t))|\leq C|Y(t)|_{L^4}|\nabla Y(t)|_{L^2}|Z(t)|_{L^4},\\
 &|b(\mathcal{R}(Z(t)),Y(t),Z(t))|\leq C|Z(t)|_{L^4}|\nabla Y(t)|_{L^2}|Z(t)|_{L^4}.
\end{align*}
Due to the Ladyzhenskaya inequality (see Lemma III 3.3 in \cite{Temam_2001})
\begin{align}\label{ineq-Lad}
|Y(t)|_{L^4}\leq 2^{\frac14}|{\nabla} Y(t)|_{L^2}^{\frac12}|Y(t)|_{L^2}^{\frac12},
\end{align}
 we have
\begin{align*}
        |b(\mathcal{R}(Y(t)),Y(t),Z(t))|&\leq
        C2^{\frac14}|\nabla Y(t)|_{L^2}^{\frac32}|Y(t)|^{\frac12}_{L^2}|Z(t)|_{L^4}.
\end{align*}
By Young's inequality we get
\begin{align*}
        |b(\mathcal{R}(Y(t)),Y(t),Z(t))|\leq
        \frac14|\nabla Y(t)|_{L^2}^{2}+\frac{27}{2}C^4|Y(t)|^2_{L^2}|Z(t)|^4_{L^4}.
\end{align*}
Again, Young's inequality gives that
\begin{align*}
        |b(\mathcal{R}(Z(t)),Y(t),Z(t))|&\leq C|Z(t)|^2_{L^4}|\nabla Y(t)|_{L^2}\\
        &\leq \frac14 |\nabla Y(t)|_{L^2}^2+C^2|Z(t)|^4_{L^4}.
\end{align*}
It follows that
\begin{align}
   \frac12 \frac{\d}{\d t}|Y(t)|^2_{L^2}+\frac12|\nabla Y(t)|_{L^2}^2
 & \leq  \frac{27}{4}C^4|Y(t)|^2_{L^2}|Z(t)|^4_{L^4}+C^2|Z(t)|^2_{L^4}\nonumber\\
  &=\frac12C_1 |Y(t)|^2_{L^2}|Z(t)|^4_{L^4}+ \frac12C_2 |Z(t)|^2_{L^4},\label{est-04.01}
\end{align}
where $C_1=\frac{27}{2}C^4$ and $C_2=2C^2$.

 Now by Gronwall's lemma we have for $t\in [0,T_1]$,
\begin{align}
      |Y(t)|^2_{L^2}\leq e^{C_1 \int_0^T|Z(s)|^4_{L^4}\d s}|Y_0|^2_{L^2}+C_2\int_0^Te^{C_1\int_s^T|Z(r)|^4_{L^4}\d r}|Z(s)|^4_{L^4}\d s,
\end{align}
 where the constants $C_1,C_2$ are independent of $Y_0$ and $Z$.

 Integrating \eqref{est-04.01} we obtain
\begin{align*}
   \int_0^{T_1}|\nabla Y(t)|^2_{L^2}\d t
 \leq |Y_0|^2_{L^2}+C_1\sup_{t\in[0,T_1]} |Y(t)|^2_{L^2}\int_0^{T}|Z(t)|^4_{L^4}\d t+ C_2\int_0^T |Z(t)|^2_{L^4}\d t.
\end{align*}
Since by assumption $Z(\cdot)\in L^4(0,T;L^4(\R^2;\R))$, it follows from the inequality \eqref{est-04.01} that  $|Y(t)|^2_{L^2}\leq \rho$, for all $t \in [0,T_1]$. In other words, the local solutions cannot blow up in finite time. Thus, by a simple and standard contradiction argument we infer that $T_1=T$. Inequalities \eqref{eqn-QGEs-apriori-1} and \eqref{eqn-QGEs-apriori-2} follow from the above two inequalities. The proof is complete.
\end{proof}

\begin{defn} A $W^{-s,4}(\R^2)$-valued adapted c\`{a}dl\`{a}g process $\theta(t)$, $t\in[0,T]$, is a solution to \eqref{eq-quasi-geo} if it satisfies
\begin{align*}
\int_0^T|\theta(t)|^4_{L^4}\d t<\infty\ \text{a.s.}
\end{align*}
and for $t\in[0,T]$, the following equality holds a.s.,
\begin{align*}
         \theta(t)=\theta_0-\int_0^t e^{-(t-s)A}B(\mathcal{R}\theta(s),\theta(s))\d s+\int_0^te^{-(t-s)A}\xi(s,z)\,\tilde{N}(\d s,\d z).
\end{align*}
\end{defn}
%------------------
\begin{thm} Suppose Assumption \ref{4.1} holds. Then for any $\theta_0\in W^{-s,4}(\mathbb{R}^{2})$, there exists a unique solution $\theta(t)$, $t\in[0,T]$, to \eqref{eq-quasi-geo}.
\end{thm}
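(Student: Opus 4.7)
The strategy is a pathwise reduction: subtract the stochastic convolution from $\theta$ and solve the resulting deterministic random PDE sample by sample, then reassemble.

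First, I would gather the ingredients already established in the section. Under Assumption~\ref{4.1}, the earlier proposition provides a $W^{-s,4}(\R^2)$-valued c\`adl\`ag modification of $Z$ with $Z(\cdot,\omega)\in L^4(0,T;L^4(\R^2;\R))$ for $\P$-a.e. $\omega$. To accommodate the rough initial datum, I would use the standard semigroup smoothing
\begin{equation*}
|e^{-tA}\theta_0|_{L^4}\leq C\,t^{-s/2}|\theta_0|_{W^{-s,4}},
\end{equation*}
which together with $s\in(0,\tfrac12)$ yields $e^{-\cdot A}\theta_0\in L^4(0,T;L^4(\R^2;\R))$, since $\int_0^T t^{-2s}\,\d t<\infty$. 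Hence $\tilde Z:=e^{-\cdot A}\theta_0+Z\in L^4(0,T;L^4(\R^2;\R))$ almost surely.

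Next, for each $\omega$ in a set of full probability, I would apply the preceding deterministic proposition with $\tilde Z$ in place of $Z$ and with initial datum $Y_0=0\in L^2(\R^2;\R)$, obtaining a unique
\begin{equation*}
Y(\cdot,\omega)\in L^4(0,T;L^4(\R^2;\R))\cap L^\infty(0,T;L^2(\R^2;\R))\cap L^2(0,T;H^1(\R^2))
\end{equation*}
solving
\begin{equation*}
\d Y+AY\,\d t+B\bigl(\mathcal{R}(\tilde Z+Y),\tilde Z+Y\bigr)\d t=0,\qquad Y(0)=0.
\end{equation*}
Since the contraction argument underpinning the proposition produces $Y$ as a measurable function of $\tilde Z$ (via Lemma~\ref{B-coer} and continuity on the relevant function spaces), $Y$ is adapted to the ambient filtration.

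Having constructed $Y$, I would set $\theta(t):=e^{-tA}\theta_0+Y(t)+Z(t)$ and verify by direct substitution that $\theta$ satisfies the mild formulation, using the identity $\d Z+AZ\,\d t=\int_Z\xi\,\tilde N(\d t,\d z)$. The integrability $\int_0^T|\theta|^4_{L^4}\,\d t<\infty$ is automatic from the three components $e^{-\cdot A}\theta_0,Y,Z\in L^4(0,T;L^4)$; the required c\`adl\`ag $W^{-s,4}$-regularity of $\theta$ follows from that of $Z$ combined with continuity of $t\mapsto e^{-tA}\theta_0+Y(t)$ into $L^4\subset W^{-s,4}$. For uniqueness, given two mild solutions $\theta_1,\theta_2$, the differences $Y_i:=\theta_i-e^{-\cdot A}\theta_0-Z$ both solve the same deterministic random PDE above with the same forcing $\tilde Z$ and zero initial data, so the uniqueness clause of the proposition forces $Y_1=Y_2$ and hence $\theta_1=\theta_2$.

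The main obstacle is the roughness of the initial datum: because $\theta_0$ lies only in $W^{-s,4}(\R^2)$, the linear evolution $e^{-tA}\theta_0$ is singular at $t=0$ in the $L^4$-norm, and one must check that the a priori bounds \eqref{eqn-QGEs-apriori-1}--\eqref{eqn-QGEs-apriori-2} (which estimate $Y$ in terms of $\int_0^T|Z|^4_{L^4}\,\d t$) continue to hold with $\tilde Z$ in place of $Z$. This reduces precisely to the integrability of $t^{-2s}$ on $(0,T)$, which is exactly what the restriction $s<\tfrac12$ guarantees; the remaining trilinear estimates and Gronwall steps are identical to those in the proposition.
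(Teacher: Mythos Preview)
Your proposal is correct and follows the same pathwise reduction as the paper: subtract the stochastic convolution, solve the remaining random deterministic PDE via the preceding proposition, and reassemble. You are in fact more careful than the paper on one point: since $\theta_0\in W^{-s,4}$ need not lie in $L^2$, the proposition cannot be invoked directly with $Y_0=\theta_0$, and the paper's decomposition $\theta=e^{-tA}\theta_0+Y+Z$ with $Y_0=0$ and equation~\eqref{eq-Y} as written (nonlinearity $B(\mathcal R(Y+Z),Y+Z)$) does not quite match the original equation; your device of absorbing the free evolution into $\tilde Z=e^{-\cdot A}\theta_0+Z$ and checking $\tilde Z\in L^4(0,T;L^4)$ via the heat smoothing $|e^{-tA}\theta_0|_{L^4}\lesssim t^{-s/2}|\theta_0|_{W^{-s,4}}$ and $2s<1$ is exactly what is needed to make the argument airtight, and is presumably what the paper intends but does not spell out.
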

\begin{proof}
Notice that if $Z$ satisfies \eqref{Z-in-L4} and $Y$ satisfies \eqref{Y-L-4}, then $\theta$ defined by \eqref{eq-2} with $Y_{0}=0$ satisfies
\begin{align}
\theta (\cdot)\in L^4([0,T],L^4(\R^2;\R))\quad\text{a.s.}
\end{align}
Now let us show the uniqueness. Assume that $\theta, \tilde{\theta}$ are two solutions of \eqref{eq-quasi-geo}.
We define
\begin{align*}
\tilde{Y}(t)=\tilde{\theta}(t)-Z(t),
\end{align*}
 hence both $Y$ and $\tilde{Y}$ solve \eqref{eq-Y}.  Moreover, since $Y,\tilde{Y}\in L^4(0,T;L^4(\R^2;\R))$ a.s., then by the uniqueness of \eqref{eq-Y} we have $Y=\tilde{Y}$ a.s.
\end{proof}
%--------------------

\begin{rem}
One might also study \eqref{eq-quasi-geo} by following the method from previous works \cite{[Brz-Hau-Zhu],Br+Liu+Zhu}. However, our approach here is different. For example, comparing with \cite{[Brz-Hau-Zhu]}, where $u\in L^{\infty}(0,T;L^{2}(D))\cap L^2(0,T;H^1_0(D))$, in this paper we get the solution of  $\theta\in L^4(0,T;L^4(\R^2))$ under much weaker assumptions on the jump process than that in \cite{[Brz-Hau-Zhu]}, in which even the Ornstein-Uhlenbeck process $Z$ is assumed to satisfy $Z\in  L^2(0,T;H^1_0(D))$. A generalization of the current result to multiplicative noise will be investigated in future work.
\end{rem}

\begin{rem}
Stochastic Navier-Stokes equations with Poisson noise in $L^p$-spaces are investigated in \cite{FRS}. The key tool employed in the proof of existence and uniqueness of a local mild solution theory in \cite{FRS} is a type of maximal inequality that was recently developed by the first two authors of this paper and Hausenblas in \cite{Zhu2017}. It's worth mentioning that \cite{Zhu2017} deals with maximal inequalities with respect to stronger norms. More precisely, the $p$-th power of the norm is assumed to be of $C^2$. Our results here avoid such conditions and our assumptions here are much easier to verify. The advantage of the maximal inequalities here becomes more transparent when one deals with closed subspace of Banach spaces.
\end{rem}

	%-----------------------------------Appendix----------------------

 \appendix
\section{Stochastic integration w.r.t. Poisson random
measures}
In this section we give a brief review of  some basic terminology and known results on the stochastic integral w.r.t. Poisson random measures.

Suppose that $(Z,\mathcal{Z})$ is a measurable space and $\nu$ is a nonnegative $\sigma$-finite measure on it.  It is known that there exists a stationary Poisson point process $\pi=(\pi(t))_{t\geq 0}$ on $(Z,\mathcal{Z})$ with the intensity measure $\nu$; see \cite{[Sato],[Rong]}. Let $N$ be the counting measure associated with $\pi$ which is defined by
  \begin{align}\label{eq-N-0}
     N(U,\omega):=\sum_{s\in\mathcal{D}(\pi(\omega))}I_U(s,\pi(s,\omega)),\ \ U\in\mathcal{B}((0,\infty))\otimes\mathcal{Z}, \  \omega\in\Omega.
 \end{align}
 In particular, we have
 \begin{align}\label{eq-N-1}
     N((0,t]\times A,\omega)=\sum_{s\in(0,t]\cap\mathcal{D}(\pi(\omega))}I_A(\pi(s,\omega)),\ \ A\in\mathcal{Z},\ \  0<t<\infty.
 \end{align}

According to \cite[Theorem 3.1]{[Ito-0]}, $N$ is a Poisson random measure with  the intensity measure $\nu$. That is for every $U\in\mathcal{B}(0,\infty)\otimes\mathcal{Z}$ with $\E N(U)<\infty$, the random variable $N(U)$ is Poisson distributed and for any pairwise disjoint sets $U_1,\dots,U_n\in\mathcal{B}(0,\infty)\otimes\mathcal{Z}$,
 the random variables $N(U_1),\dots N(U_n)$ are independent. Here as usual we shall employ the notation
\begin{align*}
    \tilde{N}=N-\Leb\otimes\nu
\end{align*}
to denote the compensated Poisson random measure of $N$.

We will define the stochastic integral of $\mathcal{P}\otimes \mathcal{Z}$-measurable functions under the martingale type $r$ Banach space setting.  Let us recall the definition of martingale type $r$ Banach spaces. 	
	      \begin{defn}
A Banach space $E$ with norm $|\cdot|_E$ is of martingale type $r$, for $r\in[1,2]$ if there exists a constant $C_r(E)>0$ such that for any $E$-valued discrete martingale $\{M_k\}_{k=1}^n$ the following inequality holds
\begin{align}\label{M-type-r-def}
      \E|M_n|_E^r\leq C_r(E)\sum_{k=0}^n\E|M_k-M_{k-1}|_E^r,
\end{align}
with $M_{-1}=0$ as usual.
\end{defn}
           Note that every real separable Banach space is of martingale type 1 and every Hilbert space is of martingale type 2. If a real separable Banach space is of martingale type $p$ for some $1 < p \leq2$ then it is of martingale type $r$ for all $r\in [1, p]$.
            All $L^p$ spaces for $p\geq 1$ and Sobolev spaces $W^{k,p}$ for $p\geq 1$ and $k>0$ are martingale type $p\wedge 2$ Banach spaces. For more details of this subject we refer the reader to \cite{[Pisier]}.

For $T\in (0,\infty)$, let $\mathcal{M}^r_T(\mathcal{P}\otimes\mathcal{Z},\d t\times\mathbb{P}\times\nu;E)$ denote the
linear space consisting of (equivalence classes of) all $\mathcal{P}\otimes \mathcal{Z}$-measurable functions
$\xi:[0,T]\times\Omega\times Z\rightarrow E$ such that
\begin{align}\label{integrability}
     \int_0^T\int_Z\mathbb{E} |\xi(t,z)|^r_{E}\,\nu(\d z)\,\d t<\infty.
\end{align}
Note that $\mathcal{M}^r_T(\mathcal{P}\otimes\mathcal{Z},\d t\times\mathbb{P}\times\nu;E)$
is the usual $L^r$ space of $E$-valued functions on $[0,T]\times\Omega\times Z$ with respect to the $\sigma$-field $\mathcal{P}\otimes\mathcal{Z}$ and the measure
$\Leb\otimes\mathbb{P}\otimes\nu$.

Let $\mathcal{M}^r_{step}([0,T]\times \Omega\times Z,\mathcal{P}\otimes\mathcal{Z};E)$ be the space of all functions $f:[0,T]\times\Omega\times Z\rightarrow E$ for which there exists a partition $0=t_0< t_1<\dots<t_n=T$ and a finite family $A_{j-1}^k$, $j=1,\dots,n$, $k=1,\dots,m$, of sets from
$\mathcal{Z}$ with $\nu(A_{j-1}^k)<\infty$  such that
   \begin{align}\label{step function}
       f(t,\omega,z)=\sum_{k=1}^{m}\sum_{j=1}^{n}\xi^{k}_{j-1}(\omega)1_{(t_{j-1},t_j]}(t)1_{A^k_{j-1}}(z), \ \ (t,\omega,z)\in[0,T]\times\Omega\times Z.
   \end{align}
 Here $\xi^k_{j-1}$ is an $E$-valued $\mathcal{F}_{t_{j-1}}$-measurable random variable, for every $j=1,\dots,n$ and $k=1,\dots,m$, and for each $j=1,\dots,n$, the sets $A_{j-1}^k$, $k=1,\dots,m,$ are pairwise disjoint.

For each $\xi \in \mathcal{M}^r_{step}([0,T]\times \Omega\times Z,\mathcal{P}\otimes\mathcal{Z};E)$, we set
  \begin{align*}
      \bar{I}(\xi):=\sum_{k=1}^{m}\sum_{j=1}^{n}\xi^k_{j-1}(\omega)\tilde{N}((t_{j-1},t_j]\times A_{j-1}^k).
  \end{align*}

%-------------------------
\begin{lem}\cite[Lemma C.2]{[Brz-Hau]} Assume that $E$ is a martingale type $r$ Banach space, $r\in(1,2]$. For every $\xi\in \mathcal{M}^r_{step}([0,T]\times \Omega\times Z,\mathcal{P}\otimes\mathcal{Z};E)$, $\bar{I}(\xi)\in  L^r(\Omega,\mathcal{F},E)$, $\E I(\xi)=0$, and
\begin{align*}
  \E|\bar{I}(\xi)|^r\leq C\,\E\int_0^T\int_Z|\xi(t,z)|^r_E\,\nu(\d z)\d t,
\end{align*}
where the constant $C$ depends on $C_r(E)$ from \eqref{M-type-r-def}.
\end{lem}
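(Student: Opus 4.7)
The plan is to establish the three assertions in order---integrability, vanishing mean, and the $L^r$ bound---the last invoking the martingale type $r$ property applied to a discrete martingale whose increments are the summands of $\bar{I}(\xi)$.

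\emph{Integrability and mean zero.} Since $\xi$ is a step function, $\bar{I}(\xi)$ is a finite sum of products $\xi^k_{j-1}(\omega)\,X_{j,k}$, where $X_{j,k}:=\tilde{N}((t_{j-1},t_j]\times A^k_{j-1})$ is a compensated Poisson random variable of finite intensity $\lambda_{j,k}:=\nu(A^k_{j-1})(t_j-t_{j-1})<\infty$ and hence lies in $L^r(\Omega;\mathbb{R})$ for every $r\in[1,2]$; each vector coefficient $\xi^k_{j-1}$ lies in $L^r(\Omega;E)$ by the integrability condition \eqref{integrability}. Thus $\bar{I}(\xi)\in L^r(\Omega,\mathcal{F};E)$. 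Since $\xi^k_{j-1}$ is $\mathcal{F}_{t_{j-1}}$-measurable while $X_{j,k}$ is independent of $\mathcal{F}_{t_{j-1}}$ with mean zero, conditioning on $\mathcal{F}_{t_{j-1}}$ annihilates each summand and yields $\E\bar{I}(\xi)=0$.

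\emph{Discrete martingale and martingale type $r$.} Enumerate the pairs $(j,k)$ lexicographically as $l=1,\ldots,nm$, let $M_l$ denote the partial sum of the first $l$ summands, and set $M_0:=0$. For the filtration $\mathcal{G}_l:=\mathcal{F}_{t_{j-1}}\vee\sigma(X_{j,k'}:k'\leq k)$ associated to the $l$-th pair $(j,k)$, the pairwise disjointness of the sets $\{A^k_{j-1}\}_k$ (for each fixed $j$) and of the time intervals $\{(t_{j-1},t_j]\}_j$ implies that the $X_{j,k}$ are mutually independent and each is independent of $\mathcal{F}_{t_{j-1}}$; consequently $X_{j,k}$ is independent of $\mathcal{G}_{l-1}$. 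Combined with $\xi^k_{j-1}\in\mathcal{G}_{l-1}$, this gives $\E[M_l-M_{l-1}\mid\mathcal{G}_{l-1}]=\xi^k_{j-1}\E X_{j,k}=0$, so $(M_l,\mathcal{G}_l)$ is an $E$-valued martingale. The martingale type $r$ inequality \eqref{M-type-r-def}, together with the factorization $\E|\xi^k_{j-1}X_{j,k}|_E^r=\E|\xi^k_{j-1}|_E^r\cdot\E|X_{j,k}|^r$ (valid by the same independence), yields
\begin{align*}
\E|\bar{I}(\xi)|_E^r\leq C_r(E)\sum_{j,k}\E|\xi^k_{j-1}|_E^r\cdot\E|X_{j,k}|^r.
\end{align*}

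\emph{Poisson moment bound and conclusion.} What remains is the scalar estimate $\E|X_{j,k}|^r\leq C_r\lambda_{j,k}$, uniform in $\lambda_{j,k}>0$ for $r\in[1,2]$. When $\lambda_{j,k}\geq 1$, Jensen gives $\E|X_{j,k}|^r\leq(\E X_{j,k}^2)^{r/2}=\lambda_{j,k}^{r/2}\leq\lambda_{j,k}$. When $\lambda_{j,k}\leq 1$, set $Y:=X_{j,k}+\lambda_{j,k}$, which is Poisson of mean $\lambda_{j,k}$, and use the direct series
\begin{align*}
\E Y^r=\sum_{n\geq 1}n^r\frac{e^{-\lambda_{j,k}}\lambda_{j,k}^n}{n!}\leq\lambda_{j,k}\sum_{n\geq 1}\frac{n^r}{n!}\leq C\lambda_{j,k};
\end{align*}
combined with $|a+b|^r\leq 2^{r-1}(|a|^r+|b|^r)$ this yields $\E|X_{j,k}|^r\leq 2^{r-1}(\E Y^r+\lambda_{j,k}^r)\leq C_r\lambda_{j,k}$. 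Substituting this bound and using $\sum_{j,k}\E|\xi^k_{j-1}|_E^r\lambda_{j,k}=\E\int_0^T\!\int_Z|\xi(t,z)|_E^r\,\nu(\d z)\,\d t$ completes the proof.

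The hard part is exactly this final scalar estimate: a blind application of Jensen produces only $\lambda_{j,k}^{r/2}$, which exceeds $\lambda_{j,k}$ precisely when $\lambda_{j,k}<1$ and $r<2$, so one must exploit the integer support of $Y$ through the series expansion above to see that $\E Y^r$ is linear in $\lambda$ for small intensities, rather than growing like $\lambda^{r/2}$.
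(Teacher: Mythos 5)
The paper itself offers no proof of this lemma --- it is quoted verbatim from \cite[Lemma C.2]{[Brz-Hau]} --- so there is nothing in the present text to compare your argument against line by line; judged on its own merits, your proof is correct and follows the standard route one would expect the cited source to take: write $\bar I(\xi)$ as a finite sum of increments $\xi^k_{j-1}X_{j,k}$, organise these into a discrete $E$-valued martingale, apply the martingale type $r$ inequality \eqref{M-type-r-def}, and reduce everything to the scalar moment bound $\E|\tilde N(B)|^r\leq C_r(\Leb\otimes\nu)(B)$. Your treatment of that scalar bound is the genuinely nontrivial part and is handled correctly, including the observation that Jensen alone fails for $\lambda<1$, $r<2$, and that one must use the integer support of the Poisson variable to get linearity in $\lambda$ for small intensities.

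One small repair is needed in the martingale step. At the boundary between time blocks, i.e.\ when $l$ corresponds to the pair $(j,1)$ with $j\geq 2$, the previous $\sigma$-field in your enumeration is $\mathcal{G}_{l-1}=\mathcal{F}_{t_{j-2}}\vee\sigma(X_{j-1,1},\dots,X_{j-1,m})$, which is contained in $\mathcal{F}_{t_{j-1}}$ but need not contain it; hence your assertion that $\xi^1_{j-1}$ is $\mathcal{G}_{l-1}$-measurable is not justified there, and you cannot pull it out of the conditional expectation directly. The conclusion $\E[M_l-M_{l-1}\mid\mathcal{G}_{l-1}]=0$ nevertheless holds, by first conditioning on $\mathcal{F}_{t_{j-1}}$ (which annihilates the increment) and then applying the tower property, since $\mathcal{G}_{l-1}\subseteq\mathcal{F}_{t_{j-1}}$; alternatively, redefine the $\sigma$-field attached to the last index of each block $j$ to be $\mathcal{F}_{t_j}$, which preserves adaptedness and makes the coefficient measurable at every step. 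With either one-line fix the argument is complete.
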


\begin{thm}\label{thm-SI-def}\cite[Theorem C.1]{[Brz-Hau]} Assume that $E$ is a martingale type $r$ Banach space, $r\in(1,2]$. For every $\xi\in \mathcal{M}^r_T(\mathcal{P}\otimes\mathcal{Z},\d t\times\mathbb{P}\times\nu;E)$, there exists a unique bounded linear operator
\begin{align*}
      I: \mathcal{M}^r_T(\mathcal{P}\otimes\mathcal{Z},\d t\times\mathbb{P}\times\nu;E)\rightarrow L^r(\Omega,\mathcal{F},E)
\end{align*}
such that
\begin{align*}
  \E|I(\xi)|^r\leq C\,\E\int_0^T\int_Z|\xi(t,z)|^r_E\,\nu(\d z)\d t.
\end{align*}
In particular, if $\xi \in \mathcal{M}^r_{step}([0,T]\times \Omega\times Z,\mathcal{P}\otimes\mathcal{Z};E)$, then $I(\xi)=\bar{I}(\xi)$.

 Moreover, the process $\int_0^t\int_Z \xi(s,z)\tilde{N}(\d s, \d z):=I(1_{[0,t]}\xi)$, $t\in[0,T]$, is an $E$-valued $r$-integrable c\`adl\`ag martingale with mean 0 and it satisfies
\begin{align}\label{sec-2-SI}
      \E\Big|\int_0^{t}\int_Z\xi(s,z)\,\tilde{N}(\d s,\d z)\Big|^r_E\leq \E \int_0^{t}\int_Z|\xi(s,z)|_E^r\,\nu(\d z)\d s,\quad t\in[0,T].
      \end{align}
\end{thm}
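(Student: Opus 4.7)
The plan is to build $I$ as the unique continuous extension of the elementary integral $\bar{I}$ defined on step processes, and then to read off the pathwise properties of $I(1_{[0,\cdot]}\xi)$ by passing to an almost sure limit of approximating martingales.

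First I would establish density of $\mathcal{M}^r_{step}([0,T]\times\Omega\times Z,\mathcal{P}\otimes\mathcal{Z};E)$ in $\mathcal{M}^r_T(\mathcal{P}\otimes\mathcal{Z},dt\times\mathbb{P}\times\nu;E)$ with respect to the norm $\|\xi\|_r:=\bigl(\E\int_0^T\int_Z|\xi(t,z)|_E^r\,\nu(dz)dt\bigr)^{1/r}$. The route is standard: by decomposing $Z=\bigcup_n Z_n$ with $\nu(Z_n)<\infty$ and truncating $|\xi|_E\le n$, I may reduce to bounded $\xi$ supported in a set of finite measure; then approximate $\xi$ first by simple $E$-valued predictable functions (using separability of $E$) and finally replace the predictable indicators by finite sums of elementary predictable blocks of the form $\xi_{j-1}^k 1_{(t_{j-1},t_j]}\otimes 1_{A^k_{j-1}}$ via a monotone-class/functional-class argument on the product $\sigma$-field $\mathcal{P}\otimes\mathcal{Z}$. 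This is routine but is the backbone of the whole construction.

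Second, given that density, the preceding Lemma (C.2) says that $\bar{I}\colon \mathcal{M}^r_{step}\to L^r(\Omega,\mathcal{F};E)$ is linear and satisfies $\E|\bar{I}(\xi)|_E^r \le C\|\xi\|_r^r$, so it extends uniquely by continuity to a bounded linear operator $I$ on $\mathcal{M}^r_T$, and the inequality $\E|I(\xi)|_E^r\le C\|\xi\|_r^r$ persists; uniqueness of this extension is automatic because $\mathcal{M}^r_{step}$ is dense. Obviously $I(\xi)=\bar{I}(\xi)$ on step processes, and $\E I(\xi)=0$ passes to the limit since it holds on the dense subspace.

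Third, to obtain the process statement, I would first verify for a step function $\xi$ of the form \eqref{step function} that the process $M_t^{\xi}:=\bar{I}(1_{[0,t]}\xi) = \sum_{k,j}\xi^k_{j-1}\tilde{N}((t_{j-1}\wedge t,t_j\wedge t]\times A^k_{j-1})$ is $\mathbb{F}$-adapted, c\`adl\`ag, and a mean-zero $L^r$ martingale; adaptedness and the martingale property follow from $\mathcal{F}_{t_{j-1}}$-measurability of $\xi^k_{j-1}$ and the independence-and-mean-zero properties of the increments of $\tilde{N}$ on disjoint predictable blocks, while c\`adl\`ag is built into the definition. For general $\xi\in\mathcal{M}^r_T$, choose step approximants $\xi_n\to\xi$ in $\mathcal{M}^r_T$. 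The difference $M^{\xi_n}-M^{\xi_m}$ is an $E$-valued c\`adl\`ag martingale, and applying Doob's maximal inequality in the martingale type $r$ space $E$ together with the bound of Lemma C.2 yields
\[
\E\sup_{t\in[0,T]}\bigl|M_t^{\xi_n}-M_t^{\xi_m}\bigr|_E^r \le C\,\|\xi_n-\xi_m\|_r^r\to 0.
\]
Passing to a subsequence if necessary, $(M^{\xi_n})$ converges $\mathbb{P}$-a.s.\ uniformly on $[0,T]$ to an $E$-valued c\`adl\`ag process, which we use as the definition of $t\mapsto\int_0^t\int_Z\xi(s,z)\,\tilde{N}(ds,dz)$. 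The martingale property and the mean-zero property transfer from $M^{\xi_n}$ to this limit by $L^r$-convergence of the values at each fixed $t$, and the $r$-th moment inequality \eqref{sec-2-SI} follows by lower semicontinuity from the corresponding inequalities for the approximants together with Fatou.

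The step I expect to be most delicate is the density argument for $\mathcal{P}\otimes\mathcal{Z}$-measurable integrands in step two of the outline: the predictable $\sigma$-field is not generated by a product of natural $\pi$-systems on $[0,T]\times\Omega$ and $Z$ in the obvious way, so the monotone class step must be carried out carefully with $\pi$-systems that together generate $\mathcal{P}$ and $\mathcal{Z}$ and whose products are still $\mathcal{P}\otimes\mathcal{Z}$-measurable elementary blocks. Everything else is, in essence, a completion-by-continuity argument combined with the standard use of Doob's inequality for Banach space-valued martingales, which is available precisely because $E$ has martingale type $r$.
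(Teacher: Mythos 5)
Your argument is correct, and it is the standard extension-by-continuity construction; note, however, that the paper does not prove this statement at all but imports it verbatim from \cite[Theorem C.1]{[Brz-Hau]}, so there is no internal proof to compare against. The one step you flag as delicate --- the monotone-class/density argument for $\mathcal{P}\otimes\mathcal{Z}$-measurable integrands --- is essentially what the paper's Lemma \ref{sec-2-main-lem} supplies (via the generating family $\hat{\mathcal{R}}$ of predictable rectangles times finite-$\nu$-measure sets), and your use of Doob's $L^r$ maximal inequality for the c\`adl\`ag modification is legitimate precisely because $r>1$ and $|M_t|_E$ is a real submartingale for any $E$-valued martingale $M$.
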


Analogously,  if $\tau$ is a stopping time with $\mathbb{P}\{\tau\leq T\}=1$,  we shall define $\int_0^{\tau}\int_Zf(s,z)\tilde{N}(ds,dz):=I(1_{(0,\tau]}\xi)$ and by Theorem \ref{thm-SI-def} it satisfies
\begin{align}\label{sec-2-stopped-SI}
      \E\Big|\int_0^{\tau}\int_Z\xi(s,z)\,\tilde{N}(\d s,\d z)\Big|^r_E\leq \E \int_0^{\tau}\int_Z|\xi(s,z)|_E^r\,\nu(\d z)\d s.
      \end{align}
\begin{rem}
It's worth mentioning that when the function is predictable, the stochastic integral studied in \cite{[Brz-Hau]} for progressively measurable functions coincides with the stochastic integral we defined here for predictable functions. But for predictable but not progressively measurable functions, the stochastic integral in \cite{[Brz-Hau]} differs from the one considered in \cite{[Ikeda]} where the stochastic integral is defined as a limit of a sequence of Bochner integrals.
\end{rem}
%---------------------------------------

%-----------------------
 \begin{lem}\label{sec-2-main-lem} Assume that $E$ is a Banach space. Let $\mathcal{L}$ be a linear space of $\mathcal{B}([0,T])\otimes\mathcal{F}_T\otimes\mathcal{Z}$-measurable $E$-valued (resp. $\mathbb{R}_+$-valued) functions defined on $[0,T] \times \Omega \times Z $.
		If $\mathcal{K}$ is a linear subspace of $\mathcal{L}$ satisfying the following two conditions.
		\begin{enumerate}
			\item $\mathcal{K}$ contains all functions of the form
			          \begin{align}\label{sec-2-eq-31}
				                     \sum_{i=1}^nx_i1_{F_i}1_{(t_i,t_{i+1}]}1_{B_i}+x_01_{F_0}1_{\{0\}}1_{B_0},\ \
				      \end{align}
				where $0=t_0<t_1<\cdots<t_n=T$, $x_i\in E$, (resp. $x_i\geq0$), $B_i\in \mathcal{Z}$ with $\nu(B_i)<\infty$, and $\ F_i\in\mathcal{F}_{t_i}\ \text{for } i=0,\dots,n;   $
				 \item \label{condition-2}
				if $\{f_n\}$ is a sequence (resp. monotone increasing sequence) in $\mathcal{K}$, $f\in\mathcal{L}$ and
				   $|f_n-f|_E$ decreases to $0$ on $[0,T]\times \Omega\times Z$, then $f\in\mathcal{K}$,
		\end{enumerate}
		then $\mathcal{K}$ contains all $\mathcal{P}\otimes\mathcal{Z}$-measurable $E$-valued (resp. $\mathbb{R}_+$-valued) functions in $\mathcal{L}$.
	\end{lem}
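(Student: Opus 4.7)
The plan is to apply the standard functional monotone class / Dynkin $\pi$--$\lambda$ argument, first for nonnegative real-valued functions and then lift it to $E$-valued functions using the separability of $E$. First I would verify that the collection
\begin{align*}
\mathcal C:=&\{F\times(s,t]\times B:0\le s<t\le T,\ F\in\mathcal F_s,\ B\in\mathcal Z,\ \nu(B)<\infty\}\\
&\cup\{F\times\{0\}\times B:F\in\mathcal F_0,\ \nu(B)<\infty\}
\end{align*}
is a $\pi$-system generating $\mathcal P\otimes\mathcal Z$, combining the standard description of the predictable $\sigma$-field with the $\sigma$-finiteness of $\nu$ (which furnishes an exhausting sequence $Z_k\uparrow Z$ with $\nu(Z_k)<\infty$). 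By condition~(1) and linearity of $\mathcal K$, the indicator $1_A$ lies in $\mathcal K$ for every $A\in\mathcal C$; in particular $1_{[0,T]\times\Omega\times Z_k}\in\mathcal K$, and every nonnegative $\mathcal C$-simple function whose indicator lies in $\mathcal L$ is in $\mathcal K$.

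For the $\mathbb R_+$-valued statement I would, for each $k$, consider
\[\mathcal D_k:=\{A\in\mathcal P\otimes\mathcal Z:1_{A\cap([0,T]\times\Omega\times Z_k)}\in\mathcal K\}.\]
Using linearity of $\mathcal K$ to take complements relative to $[0,T]\times\Omega\times Z_k$, and condition~(2) to take countable increasing unions (since $1_{\bigcup_n A_n}$ is the monotone increasing limit of $1_{A_1\cup\cdots\cup A_n}$), one checks that $\mathcal D_k$ is a $\lambda$-system containing $\mathcal C$; Dynkin's theorem then gives $\mathcal D_k=\mathcal P\otimes\mathcal Z$. Letting $k\to\infty$ once more via condition~(2) shows $1_A\in\mathcal K$ for every $A\in\mathcal P\otimes\mathcal Z$ with $1_A\in\mathcal L$. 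Any nonnegative $\mathcal P\otimes\mathcal Z$-measurable $f\in\mathcal L$ is the monotone increasing pointwise limit of such $\mathcal K$-simple functions, so condition~(2) delivers $f\in\mathcal K$.

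For the $E$-valued case, take a $\mathcal P\otimes\mathcal Z$-measurable $f:[0,T]\times\Omega\times Z\to E$ with $f\in\mathcal L$. Separability of $E$ yields, via the standard nearest-point construction, simple $E$-valued predictable approximants $f_n=\sum_{j=1}^{n}x_j\,1_{A_j^{(n)}}$ with $A_j^{(n)}\in\mathcal P\otimes\mathcal Z$, $|f_n-f|_E\downarrow 0$ pointwise, and $|f_n|_E\le 2|f|_E$ (so $f_n\in\mathcal L$). To place each summand $x\cdot 1_A$ in $\mathcal K$ I would rerun the Dynkin argument on $\mathcal D_k^x:=\{A\in\mathcal P\otimes\mathcal Z:x\cdot 1_{A\cap([0,T]\times\Omega\times Z_k)}\in\mathcal K\}$; condition~(1) with every coefficient taken equal to $x$ or $0$ provides the $\pi$-system base, and condition~(2) provides the monotone closure. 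Linearity of $\mathcal K$ then gives $f_n\in\mathcal K$, and a final appeal to condition~(2) yields $f\in\mathcal K$.

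The main technical obstacle is the dual requirement in condition~(2) that \emph{both} the approximating sequence and its limit lie in $\mathcal L$: at each stage of the monotone class argument one must ensure that intermediate indicators and simple approximants do not escape from $\mathcal L$. This is controlled by the $\sigma$-finite exhaustion $Z_k\uparrow Z$, which makes all elementary indicators integrable against $\mathrm{Leb}\otimes\mathbb P\otimes\nu$, together with the pointwise bound $|f_n|_E\le 2|f|_E$ from the nearest-point construction. A related subtlety is that condition~(2) in the $\mathbb R_+$-case requires a \emph{monotone} sequence, which is why the Dynkin step uses cumulative unions $A_1\cup\cdots\cup A_n$ rather than arbitrary approximations.
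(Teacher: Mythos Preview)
Your proposal is correct and follows essentially the same Dynkin $\pi$--$\lambda$ strategy as the paper: both identify the same generating $\pi$-system for $\mathcal P\otimes\mathcal Z$ (using $\sigma$-finiteness of $\nu$), reduce to showing $x\,1_A\in\mathcal K$ for $A\in\mathcal P\otimes\mathcal Z$, and then approximate general $f$ by simple functions with $|f_n-f|_E\downarrow 0$. The only substantive difference is that the paper works directly with $\mathcal P'=\{A:x\,1_A\in\mathcal K\text{ for all }x\}$ and asserts it is a Dynkin system, whereas you localize first to $[0,T]\times\Omega\times Z_k$ via $\mathcal D_k$ (resp.\ $\mathcal D_k^x$) and pass to the limit in $k$; your extra step makes the verification of the Dynkin axioms and the $\mathcal L$-membership constraint in condition~(2) more transparent, but the underlying argument is the same.
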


	\begin{proof}
	   Note that for every $E$-valued
		$\mathcal{P}\otimes\mathcal{Z}$-measurable function $f$ in $\mathcal{L}$, we can always find a sequence of simple functions $\{f^n\}$ of the form $\sum_{j=1}^m x_j1_{A_j}$, $x_j\in E$, $A_j\in\mathcal{P}\otimes\mathcal{Z}$ such that $|f^n-f|_E$ decreases to $0$, as $n\rightarrow\infty$.  In particular, if $f$ is a $\mathcal{P}\otimes\mathcal{Z}$-measurable and positive function, then  $f$ is a monotone increasing limit of  $\mathcal{P}\otimes\mathcal{Z}$-measurable and positive simple functions. Hence by condition \ref{condition-2} and linearity of $\mathcal{K}$ we only need to show that for every $x\in E$ and $A\in\mathcal{P}\otimes\mathcal{Z}$, $x1_A\in\mathcal{K}$.

		It is known (e.g. \cite{[Metivier]}) that the predictable $\sigma$-field is also generated by the following family of sets
		\begin{align}
		   \mathcal{R}= \{(s,t]\times F:\ 0\leq s\leq t\leq T,\ F\in\mathcal{F}_s\}\cup\{\{0\}\times F,\ F\in\mathcal{F}_0\}.
		\end{align}
		Let $\mathcal{G}:=\{B\in Z: \nu(B)<\infty\}$. Since $(Z,\mathcal{Z},\nu)$ is a $\sigma$-finite measure space, $\mathcal{Z}$ contains an exhausting sequence $(D_j)_{j\in\mathbb{N}}$ of sets such that $\nu(D_j)<\infty$ for all $j\in\mathbb{N}$. So it is easy to see that $\sigma(\mathcal{G})=\mathcal{Z}$ and $\mathcal{G}$ contains the exhausting sequence $(D_j)_{}j\in\mathbb{N}$. Hence we have $\mathcal{P}\otimes\mathcal{Z}=\sigma(\mathcal{R})\otimes\sigma(\mathcal{G})=\sigma(\mathcal{R}\times\mathcal{G})$. In other words, $\mathcal{P}\otimes\mathcal{Z}$ is generated by
		    \begin{align}\label{sec-2-sf}
		   \hat{\mathcal{R}}= \{(s,t]\times F\times B:\ 0\leq s\leq t\leq T,\ F\in\mathcal{F}_s,\ B\in\mathcal{G}\}\cup\{\{0\}\times F\times B,\ F\in\mathcal{F}_0,\ B\in\mathcal{G}\}.
		\end{align}
		Let us set
		\begin{align*}
			  \mathcal{P}'=\{A: A\in\mathcal{P}\otimes\mathcal{Z}\ s.t.\ x1_A\in\mathcal{K},\ \forall\ x\in E\}
		\end{align*}
		Then it is straightforward to see that  $\mathcal{P}'$ is a Dynkin system. Let $\mathcal{A}$ be the collection of all finite unions of sets in $\hat{\mathcal{R}}$.
		Clearly, $\mathcal{A}$ is closed under finite intersections. Indeed, take $A=\cup_{i=1}^k A_i$ and $B=\cup_{j=1}^l$ from $\mathcal{A}$, where $A_i, B_j\in\hat{\mathcal{R}}$. Then $A\cap B=\cup_{i\leq k,j\leq l}(A_i\cap B_j)\in\mathcal{A}$ which shows that $\mathcal{A}$ admits finite intersections, i.e. $\mathcal{A}$ is a $\pi$ system. Also by (1), we find $\mathcal{A}\subset \mathcal{P}'$. Hence we may apply Dynkin's lemma to get $\mathcal{P}\otimes\mathcal{Z}=\sigma(\mathcal{A})\subset\mathcal{P}'$. Therefore, we obtain $\mathcal{P}'=\mathcal{P}\otimes\mathcal{Z}$ which shows that for every $x\in E$ and $A\in\mathcal{P}\otimes\mathcal{Z}$, $x1_A\in\mathcal{K}$.
	\end{proof}
	\begin{prop}\label{A.prop-2} Assume that $E$ is a Banach space. If
		$ \xi:  [0,T] \times \Omega \times Z \to E $ is a $\mathcal{P}\otimes\mathcal{Z}$-measurable function such that either $$\E\int_0^T\int_Z|\xi(s,z)|_E\,N(\d s,\d z)<\infty$$ or $\E\int_0^T\int_Z|\xi(s,z)|_E\,\nu(\d z)\d s<\infty$, then we have
		\begin{align}\label{sec-2-eq-13}
			\E\int_0^T\int_Z\xi(s,z)\,N(\d s,\d z)=\E\int_0^T\int_Z\xi(s,z)\,\nu(\d z)\d s.
		\end{align}
	\end{prop}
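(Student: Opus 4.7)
The strategy is a classical monotone-class argument that reduces the vector-valued identity to elementary step functions using Lemma~\ref{sec-2-main-lem}.

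First, I would verify \eqref{sec-2-eq-13} directly on elementary predictable building blocks of the form $f(s,\omega,z) = x\,1_F(\omega)\,1_{(a,b]}(s)\,1_B(z)$ with $x \in E$, $F \in \mathcal{F}_a$ and $B \in \mathcal{Z}$ such that $\nu(B) < \infty$. For such $f$, the left-hand side of \eqref{sec-2-eq-13} equals
\[
x \cdot \E\bigl[1_F \, N((a,b] \times B)\bigr] = x \cdot \P(F) \cdot (b-a)\,\nu(B),
\]
by the independence of the Poisson increment $N((a,b] \times B)$ from $\mathcal{F}_a$ together with $\E N((a,b]\times B) = (b-a)\nu(B)$; the right-hand side evaluates to the same quantity by Fubini's theorem. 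Linearity extends the identity to all finite sums of the form \eqref{sec-2-eq-31}.

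Second, I would handle non-negative scalar integrands by applying Lemma~\ref{sec-2-main-lem} in its monotone-increasing form. Let $\mathcal{K}_0$ denote the class of non-negative $\mathcal{B}([0,T]) \otimes \mathcal{F}_T \otimes \mathcal{Z}$-measurable functions for which \eqref{sec-2-eq-13} holds in $[0,\infty]$. Both sides of \eqref{sec-2-eq-13} are integrals against $\sigma$-finite measures, so the monotone convergence theorem shows $\mathcal{K}_0$ is closed under monotone increasing pointwise limits; together with step one this places every elementary step function in $\mathcal{K}_0$. Lemma~\ref{sec-2-main-lem} then yields \eqref{sec-2-eq-13} for every non-negative $\mathcal{P} \otimes \mathcal{Z}$-measurable scalar function. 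Applied to $|\xi|_E$, this shows that the two integrability hypotheses of the proposition are equivalent and both finite.

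Third, for the $E$-valued statement, let $\mathcal{L}$ consist of the $E$-valued $\mathcal{B}([0,T]) \otimes \mathcal{F}_T \otimes \mathcal{Z}$-measurable functions $g$ with $\E\int_0^T\int_Z |g|_E \,\nu(\d z)\,\d s < \infty$, and let $\mathcal{K} \subset \mathcal{L}$ be those $g$ for which \eqref{sec-2-eq-13} holds. Step one places all elementary step functions in $\mathcal{K}$. If $g_n \in \mathcal{K}$ and $g \in \mathcal{L}$ with $|g_n - g|_E \downarrow 0$, then $|g_n|_E \leq |g|_E + |g_1 - g|_E \leq 2|g|_E + |g_1|_E$; this dominating function is integrable against $\d\nu\,\d s \otimes \P$ by definition of $\mathcal{L}$, and also against $\E N(\d s,\d z)$ thanks to the scalar identity of step two. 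Dominated convergence applied on each side of \eqref{sec-2-eq-13} then transfers the identity from $g_n$ to $g$, so $g \in \mathcal{K}$. Lemma~\ref{sec-2-main-lem} closes the argument, and the proposition follows by applying it to the given $\xi \in \mathcal{L}$.

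The main technical point is the dominated convergence step in the $E$-valued passage to the limit: it is precisely the scalar identity from step two that upgrades $\d\nu\,\d s \otimes \P$-integrability of a predictable non-negative function to the corresponding integrability against the Poisson measure, which makes the interchange of limit and expectation legitimate on the $N$-side.
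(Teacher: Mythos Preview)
Your proposal is correct and follows essentially the same route as the paper: both arguments first establish the identity for nonnegative scalar predictable functions via Lemma~\ref{sec-2-main-lem} in its monotone form (verifying it directly on step functions using independence of Poisson increments from the past), then use this to see the two integrability hypotheses are equivalent, and finally run Lemma~\ref{sec-2-main-lem} again for the $E$-valued case. The only cosmetic difference is in the closure step for the vector-valued class: the paper applies monotone convergence to the decreasing scalar sequence $|\xi^n-\xi|_E\downarrow 0$ on both the $N$- and $\nu$-sides, whereas you invoke dominated convergence with the explicit majorant $2|g|_E+|g_1|_E$; both work and rely on the same underlying fact (predictable nonnegative functions have equal expected $N$- and $\nu$-integrals, from step two).
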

	\begin{proof} We first show that equality \eqref{sec-2-eq-13} holds for any positive $\mathcal{P}\otimes\mathcal{Z}$-measurable function $f$.
		 Let $\mathcal{L}_1:=\{f:[0,T]\times \Omega\times Z\rightarrow \mathbb{R}_+\,\big{|} \, f\,\text{is }\mathcal{B}([0,T])\otimes\mathcal{F}_T\otimes\mathcal{Z}/\mathcal{B}(\mathbb{R}_+)\text{-measurable}\}$.   	 Define
		\begin{align*}
			\mathcal{K}_1=\{f:[0,T]\times\Omega\times Z\rightarrow \mathbb{R}_+\,\big{|}\,f\text{ is in }\mathcal{L}_1\text{ and satisfies } \eqref{sec-2-eq-13} \}.
		\end{align*}
		Take a function $f$ of the form
		  \begin{align}
	                     \sum_{i=1}^na_i1_{F_i}1_{(t_i,t_{i+1}]}1_{B_i}+a_01_{F_0}1_{\{0\}}1_{B_0},\ \ t_i<t_{i+1},\,a_i\geq0,\ B_i\in \mathcal{G},\ F_i\in\mathcal{F}_{t_i},\ \text{for } i=0,\cdots,n.
	      \end{align}
		 Then by the independence of $1_{F_i}$ and $N((t_i,t_{i+1}]\times B_i)$ and the stationary property of the Poisson random measure $N$, we have
		\begin{align*}
			     \E\int_0^T\int_Zf(s,z)N(\d s,\d z)&=\E\sum_{i=1}^ma_i1_{F_i}N((t_i,t_{i+1}]\times B_i)
			                                        =\sum_{i=1}^ma_i\E1_{F_i}\E N((t_i,t_{i+1}]\times B_i)\\
			                                        &=\sum_{i=1}^ma_i\E1_{F_i}\E \nu(B_i)(t_{i+1}-t_i)
			=\E\int_0^T\int_Zf(s,z)\,\nu(\d z)\d s.
		\end{align*}
Suppose that $\{f_n\}\subset \mathcal{K}_1$ is a monotone increasing sequence of positive functions and $f_n$ converges to $f$. Then by the monotone convergence theorem
\begin{align*}
	     \E\int_0^T\int_Zf(s,z)N(\d s,\d z)& =\sup_n  \E\int_0^T\int_Zf^n(s,z)\,N(\d s,\d z) \\
	                                       & = \sup_n\E\int_0^T\int_Zf^n(s,z)\,\nu(\d z)\d s= \E\int_0^T\int_Zf(s,z)\,\nu(\d z)\d s.
	\end{align*}
	It follows from Lemma \ref{sec-2-main-lem} that $\mathcal{K}_1$ contains all $\mathcal{P}\otimes\mathcal{Z}$-measurable positive functions.
	
		                    	Now let us take an $E$-valued and $\mathcal{P}\otimes\mathcal{Z}$-measurable function $\xi$.
		From above discussion, we know
		      \begin{align*}
			\E\int_0^T\int_Z|\xi(s,z)|_E\,\nu(\d z)\d s=\E\int_0^T\int_Z|\xi(s,z)|_E\,N(\d s,\d z),
			\end{align*}
	   from which deduce  that whenever one side of the equality makes sense, so does the other. Hence if we suppose either
		    $ \E\int_0^T\int_Z|\xi(s,z)|_E\nu(\d z)\d s<\infty$ or $\E\int_0^T\int_Z|\xi(s,z)|_EN(\d z,\d s)<\infty$,	
			then both integrals $\int_0^T\int_Z\xi(s,z)\nu(\d z)\d s$ and $\int_0^T\int_Z\xi(s,z)N(\d z,\d s)$ are well defined as Lebesgue-Bochner integrals, $\mathbb{P}$-a.s.

			   Define
			\begin{align*}
				\mathcal{L}_2&=\{\xi:[0,T]\times\Omega\times Z\rightarrow E,\big{|}\,\xi\,\text{is }\mathcal{B}([0,T])\otimes\mathcal{F}_T\otimes\mathcal{Z}\text{-measurable and }
				\E\int_0^T\int_Z|\xi(s,z)|_E\,\nu(\d z)\d s<\infty\},     \\
				\mathcal{K}_2&=\{\xi:[0,T]\times\Omega\times Z\rightarrow E\,\big{|}\,
			 \xi\in\mathcal{L}_2\,
				\text{and }\eqref{sec-2-eq-13} \text{ holds}\}.
			\end{align*}
			We can repeat the argument before to show that every function $\xi$ of the form \eqref{sec-2-eq-31} is in $\mathcal{L}_2$ and  satisfies \eqref{sec-2-eq-31}.
		   Now suppose that $\{\xi^n\}_{n\in\mathbb{N}}\subset \mathcal{K}_2$, $\xi\in\mathcal{L}_2$, and $|\xi^n-\xi|_E$ decreases to $0$ as $n\rightarrow\infty$. Then according to the monotone convergence theorem, we obtain
			\begin{align*}
				   \lim_{n\rightarrow\infty} \E\int_0^T\int_Z|\xi^n(s,z)-\xi(s,z)|_E\,N(\d s,\d z)=0  \text{ and }\lim_{n\rightarrow\infty} \E\int_0^T\int_Z|\xi^n(s,z)-\xi(s,z)|_E\,\nu(\d z)\d s=0
				\end{align*}
  Since $\xi^n$ satisfies \eqref{sec-2-eq-31}, we infer
\begin{align*}
	       \E\int_0^T\int_Z\xi(s,z)\,N(\d s,\d z)&=\lim_{n\rightarrow\infty}\E\int_0^T\int_Z\xi^n(s,z)\,N(\d s,\d z)\\
	&= \lim_{n\rightarrow\infty}\E\int_0^T\int_Z\xi^n(s,z)\,\nu(\d z)\d s= \E\int_0^T\int_Z\xi(s,z)\,\nu(\d z)\d s.
	\end{align*}  		
			Again by Lemma \ref{sec-2-main-lem}, $\mathcal{K}_2$ contains all $\mathcal{P}\otimes\mathcal{Z}$-measurable functions in $\mathcal{L}_2$.
	\end{proof}
	%-----------------------------------------------------------
	
	\begin{prop} Let $E$ be a Banach space. Suppose that $\xi:[0,T]\times\Omega\times Z\rightarrow E$ is a $\mathcal{B}([0,T])\otimes\mathcal{F}_T\otimes\mathcal{Z}$-measurable function such that $\E\int_0^T\int_Z|\xi(s,z)|_EN(\d s,\d z)<\infty$. Then
		\begin{align}\label{A.prop-eq-2}
			 \int_0^T\int_Z\xi(s,\cdot,z) \,N(\d s,\d z)(\omega)=\sum_{s\in(0,T]\cap\mathcal{D}(\pi)}\xi(s,\omega,\pi(s,\omega))
		\end{align}
	 \end{prop}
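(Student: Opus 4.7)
\smallskip

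The plan is to exploit the pathwise interpretation of $N$ as a counting measure and then apply the monotone class argument from Lemma \ref{sec-2-main-lem} (suitably adapted). For each fixed $\omega$, the measure $N(\cdot,\omega)$ on $(0,T]\times Z$ is, by the definition \eqref{eq-N-0}, the counting measure supported on the (at most countable) set $\{(s,\pi(s,\omega)):s\in(0,T]\cap\mathcal{D}(\pi(\omega))\}$. Hence the claimed identity \eqref{A.prop-eq-2} is nothing but the elementary fact that a Bochner integral against a counting measure of an absolutely integrable function equals the sum of its values at the atoms. Note first that by the hypothesis $\E\int_0^T\int_Z|\xi(s,z)|_E\,N(\d s,\d z)<\infty$ together with Proposition \ref{A.prop-2}, we have $\sum_{s\in(0,T]\cap\mathcal{D}(\pi)}|\xi(s,\pi(s))|_E<\infty$ $\P$-a.s., so the series on the right-hand side of \eqref{A.prop-eq-2} converges absolutely in $E$.

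First I would verify \eqref{A.prop-eq-2} for elementary functions of the form $\xi=x\mathbf{1}_F\mathbf{1}_{(a,b]}\mathbf{1}_B$ with $x\in E$, $F\in\mathcal{F}_T$, $0\le a<b\le T$ and $B\in\mathcal{Z}$, $\nu(B)<\infty$. Indeed, using the definition of the stochastic integral against $N$ (which, for such step functions, coincides with the pathwise Bochner integral) we get
\begin{equation*}
\int_0^T\int_Z x\mathbf{1}_F\mathbf{1}_{(a,b]}(s)\mathbf{1}_B(z)\,N(\d s,\d z)
= x\mathbf{1}_F\,N((a,b]\times B),
\end{equation*}
and, by \eqref{eq-N-1}, the right-hand side equals $x\mathbf{1}_F\sum_{s\in(a,b]\cap\mathcal{D}(\pi)}\mathbf{1}_B(\pi(s))$, which is precisely the right-hand side of \eqref{A.prop-eq-2} evaluated at this particular $\xi$. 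Linearity in $\xi$ then yields \eqref{A.prop-eq-2} for all finite linear combinations of such building blocks, i.e.\ for all functions of the form \eqref{sec-2-eq-31} (with $\mathcal{F}_{t_i}$ replaced by $\mathcal{F}_T$, which is allowed in the present setting since no predictability is required).

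Second, I would invoke a variant of Lemma \ref{sec-2-main-lem} in which the predictable $\sigma$-field $\mathcal{P}$ is replaced by $\mathcal{B}([0,T])\otimes\mathcal{F}_T$. Let $\mathcal{L}$ be the class of $\mathcal{B}([0,T])\otimes\mathcal{F}_T\otimes\mathcal{Z}$-measurable $E$-valued maps with $\E\int_0^T\int_Z|\xi|_E\,N(\d s,\d z)<\infty$ and let $\mathcal{K}\subset\mathcal{L}$ be the subclass for which \eqref{A.prop-eq-2} holds. By the previous paragraph, $\mathcal{K}$ contains all elementary functions of the form \eqref{sec-2-eq-31}. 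Closure under monotone convergence is automatic: if $\{\xi^n\}\subset\mathcal{K}$, $\xi\in\mathcal{L}$ and $|\xi^n-\xi|_E\downarrow 0$, then Proposition \ref{A.prop-2} gives $\E\int_0^T\int_Z|\xi^n-\xi|_E\,N(\d s,\d z)\to 0$, whence the left-hand side of \eqref{A.prop-eq-2} for $\xi^n$ converges in $L^1(\Omega;E)$ to that for $\xi$; pathwise, dominated convergence applied to the counting measure $N(\cdot,\omega)$ forces the right-hand sides to converge as well.

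The (minor) obstacle is simply that Lemma \ref{sec-2-main-lem} is phrased in terms of the predictable $\sigma$-field, while here $\xi$ is only $\mathcal{B}([0,T])\otimes\mathcal{F}_T\otimes\mathcal{Z}$-measurable; however, its proof goes through verbatim once $\mathcal{F}_{t_i}$ is replaced by $\mathcal{F}_T$, since the generating $\pi$-system $\hat{\mathcal{R}}$ in \eqref{sec-2-sf} still generates $\mathcal{B}([0,T])\otimes\mathcal{F}_T\otimes\mathcal{Z}$ under this relaxation. Applying this adapted lemma then shows $\mathcal{K}=\mathcal{L}$, which completes the proof. (For $E$-valued $\xi$ one can either run the argument directly on $E$-valued simple functions, as above, or reduce to the scalar positive case via $|\xi|_E$ and the identity $\E\int|\xi|_E\,\d N = \E\int|\xi|_E\,\d(\Leb\otimes\nu)$ from Proposition \ref{A.prop-2} to justify approximating $\xi$ in $L^1(\d N)$ by elementary functions.)
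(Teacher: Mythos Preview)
Your argument is essentially correct, but it follows a different and more elaborate route than the paper. The paper's proof is purely pathwise: it fixes $\omega$ in a full-measure set on which $\int_0^T\int_Z|\xi(s,\omega,z)|_E\,N(\d s,\d z)(\omega)<\infty$, observes that $\xi(\cdot,\omega,\cdot)$ is then Bochner integrable with respect to the deterministic counting measure $N(\cdot,\omega)$, approximates this \emph{frozen} section by simple functions $\sum_i x_i 1_{A_i}$ with $A_i\in\mathcal{B}([0,T])\otimes\mathcal{Z}$ (no $\omega$-dependence in the approximants), and verifies \eqref{A.prop-eq-2} for those directly from \eqref{eq-N-0}. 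No monotone class lemma over $[0,T]\times\Omega\times Z$ is needed, and Proposition~\ref{A.prop-2} is never invoked. Your approach instead runs the approximation globally in $(t,\omega,z)$ via an adapted Lemma~\ref{sec-2-main-lem}; this works, but is heavier machinery for what is, $\omega$-by-$\omega$, just the fact that the Bochner integral of an integrable function against a purely atomic measure equals the sum over the atoms.

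One point to tidy up: your two invocations of Proposition~\ref{A.prop-2} are misplaced. That proposition requires $\mathcal{P}\otimes\mathcal{Z}$-measurability, whereas here $\xi$ (and $|\xi^n-\xi|_E$) is only $\mathcal{B}([0,T])\otimes\mathcal{F}_T\otimes\mathcal{Z}$-measurable. Fortunately neither appeal is actually needed: the a.s.\ absolute convergence of the series follows immediately from $\int|\xi|_E\,\d N<\infty$ a.s.\ together with the definition of $N$ as a sum of Dirac masses, and the convergence $\E\int|\xi^n-\xi|_E\,\d N\to 0$ follows from monotone convergence (since $|\xi^n-\xi|_E\downarrow 0$ and $|\xi^1-\xi|_E$ is $N$-integrable). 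With these cosmetic fixes your proof stands.
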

  \begin{proof}
	   Since $\xi$ is a $\mathcal{B}([0,T])\otimes\mathcal{F}_T\otimes\mathcal{Z}$-measurable function and $\E\int_0^T\int_Z|\xi(s,z)|_EN(\d s,\d z)<\infty$, there exists a set $\hat{\Omega}\subset \Omega$ with probability $1$ such that for every $\omega\in\hat{\Omega}$, $\xi(\cdot,\omega,\cdot)$ is $\mathcal{B}([0,T])\otimes\mathcal{Z}$-measurable and $\int_0^T\int_Z|\xi(s,\cdot,z)|_EN(\d s,\d z)(\omega)<\infty$. This implies that $\xi(\cdot,\omega,\cdot)$ is Bochner integrable with respect to $N$ $\mathbb{P}$-a.s. Given $\omega\in\Omega$, we can always find a sequence of functions $\{\xi^n\}$ of the form
	\begin{align}\label{A.prop-2-eq-1}
		        \sum_{i=1}^nx_i1_{A_i},\ \ A_i\in \mathcal{B}([0,T])\otimes\mathcal{Z}
		\end{align}
		such that $|\xi^n(t,\omega,z)-\xi(t,z)|_E$ decreases to $0$ as $n\rightarrow\infty$ for every $(t,z)\in[0,T]\times Z$. So it's enough to verify \eqref{A.prop-eq-2} for the function $\xi$ of the form \eqref{A.prop-2-eq-1}. For this, observe that
		\begin{align*}
			    \int_0^t\int_Z\xi(s,\cdot,z)\,N(\d s,\d z)(\omega)&= \sum_{i=1}^nx_iN(A_i\cap(0,t]\times Z)(\omega)= \sum_{i=1}^nx_i\sum_{s\in(0,t]\cap\mathcal{D}(\pi)}1_{A_i}(\pi(s,\omega))\\
			&= \sum_{s\in(0,t]\cap\mathcal{D}(\pi)}\sum_{i=1}^nx_i1_{A_i}(\pi(s,\omega))=\sum_{s\in(0,t]\cap\mathcal{D}(\pi)}\xi(s,\omega,\pi(\omega)).
			\end{align*}
	\end{proof}
	
	%-------------------------------------------
\begin{prop}\label{A.prop-3} Let $E$ be a martingale type $r$ Banach space.
 If
	$\xi\in\mathcal{M}^1_T(\mathcal{P}\otimes\mathcal{Z},\d t\times\mathbb{P}\times\nu;E)\cap \mathcal{M}^r_T(\mathcal{P}\otimes\mathcal{Z},\d t\times\mathbb{P}\times\nu;E)$, then  for each $t\geq0$, $\mathbb{P}$-a.s.
	\begin{align}\label{A.prop-3-1}
	     \int_0^t\int_Z\xi(s,\cdot,z)\,\tilde{N}(ds,dz)&=\int_0^t\int_Z \xi(s,\cdot,z)\, N(ds,dz)-\int_0^t\int_Z\xi(s,\cdot,z)\,\nu(dz)ds\nonumber\\
	   & = \sum_{s\in(0,t]\cap
	     \mathcal{D}(\pi)}\xi(s,\cdot,\pi(s))-\int_0^t\int_Z\xi(s,\cdot,z)\,\nu(dz)ds.
	\end{align}
	Here the integral $\int_0^t\int_Z\xi(s,\cdot,z)\,\tilde{N}(ds,dz)$ on the left side is understood as the stochastic integral taking values in $E$.
	\end{prop}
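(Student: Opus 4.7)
The argument naturally splits into two stages, one for each equality.

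The second equality follows almost immediately from the preceding two propositions. Since $\xi \in \mathcal{M}^1_T(\mathcal{P}\otimes\mathcal{Z}, \d t\times\P\times\nu;E)$ we have $\E \int_0^t \int_Z |\xi(s,z)|_E \,\nu(\d z)\d s < \infty$, so Proposition \ref{A.prop-2} yields the matching finiteness $\E \int_0^t \int_Z |\xi(s,z)|_E \, N(\d s,\d z) < \infty$. Both $\int_0^t\int_Z \xi\, N(\d s,\d z)$ and $\int_0^t\int_Z \xi\, \nu(\d z)\d s$ are therefore well-defined as Bochner integrals $\P$-a.s., and the pathwise sum representation \eqref{A.prop-eq-2}, applied $\omega$ by $\omega$, immediately converts the former into $\sum_{s\in(0,t]\cap \mathcal{D}(\pi)} \xi(s,\pi(s))$, yielding the second equality.

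The first equality is the substantive content. My plan is to invoke the monotone class machinery of Lemma \ref{sec-2-main-lem} applied to the linear space $\mathcal{L} := \mathcal{M}^1_T \cap \mathcal{M}^r_T$ and to the subset
\[
\mathcal{K} := \bigl\{ \xi \in \mathcal{L} : \text{the first equality of \eqref{A.prop-3-1} holds $\P$-a.s.\ for every } t \in [0,T] \bigr\}.
\]
That $\mathcal{K}$ contains every step function of the form \eqref{sec-2-eq-31} with $\nu(B_i) < \infty$ is a direct calculation: for such a $\xi$, the definition of $\bar I(1_{(0,t]}\xi)$ reads $\sum_i x_i 1_{F_i} \tilde N((t_i \wedge t, t_{i+1} \wedge t] \times B_i)$, and decomposing $\tilde N = N - \Leb \otimes \nu$ recovers precisely the target identity. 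Linearity of $\mathcal{K}$ follows from the linearity of the stochastic and Bochner integrals together with the fact that a finite sum of a.s.-identities is again an a.s.-identity.

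The key step and the main obstacle is the closure property. Given $\{\xi^n\} \subset \mathcal{K}$ and $\xi \in \mathcal{L}$ with $|\xi^n - \xi|_E$ decreasing to $0$ pointwise on $[0,T]\times\Omega\times Z$, dominated convergence (with integrable majorant $|\xi^1|_E + |\xi|_E$) yields $\xi^n \to \xi$ simultaneously in $L^1$ and $L^r$ of $\P \otimes \d t \otimes \nu$. Theorem \ref{thm-SI-def} then gives $I(1_{(0,t]}\xi^n) \to I(1_{(0,t]}\xi)$ in $L^r(\Omega;E)$, while Proposition \ref{A.prop-2} applied to $|\xi^n - \xi|_E$ delivers $\int_0^t\int_Z \xi^n \, N(\d s,\d z) \to \int_0^t\int_Z \xi \, N(\d s,\d z)$ and $\int_0^t\int_Z \xi^n \, \nu(\d z)\d s \to \int_0^t\int_Z \xi \, \nu(\d z)\d s$ in $L^1(\Omega;E)$. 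Extracting a common almost-surely convergent subsequence transfers the identity from $\xi^n$ to $\xi$, so $\xi \in \mathcal{K}$. Lemma \ref{sec-2-main-lem} now gives $\mathcal{K} \supseteq \mathcal{M}^1_T \cap \mathcal{M}^r_T$, completing the proof.
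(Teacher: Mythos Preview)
Your proposal is correct and follows essentially the same route as the paper: verify the identity on step functions, then invoke Lemma \ref{sec-2-main-lem} with $\mathcal{L}=\mathcal{M}^1_T\cap\mathcal{M}^r_T$, using the $L^r$ bound \eqref{sec-2-eq-10} for convergence of the stochastic integrals, $L^1$ convergence for the $N$- and $\nu$-integrals, and a subsequence extraction to pass to the limit almost surely. Your explicit separation of the second equality (via the pathwise sum representation) and your more careful bookkeeping of the dominated-convergence majorant are presentational refinements, not a different argument.
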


	\begin{proof}
	 The proof could be done in the same manner as earlier in the proof of Proposition \ref{A.prop-2}. First the equality \eqref{A.prop-3-1} can be easily verified for functions of simple structures \eqref{sec-2-eq-31}.
	Next, an approximating step introduced in Lemma \ref{sec-2-main-lem} allows us to extend the equality to a general $\mathfrak{F}$-predictable process in $\mathcal{M}^1_T(\mathcal{P}\otimes\mathcal{Z},\d t\times\mathbb{P}\times\nu;E)$. To do this, suppose that $\{\xi_n\}_{n\in\mathbb{N}}$ is a sequence of functions in $ \mathcal{M}^1_T(\mathcal{P}\otimes\mathcal{Z},\d t\times\mathbb{P}\times\nu;E)\cap\mathcal{M}^r_T(\mathcal{P}\otimes\mathcal{Z},\d t\times\mathbb{P}\times\nu;E)$ such that for every $n\in\mathbb{N}$, $\xi^n$ satisfies \eqref{A.prop-3-1} and $|\xi^n-\xi|_E$ decreases to $0$, as $n\rightarrow\infty$ on $[0,T]\times\Omega\times Z$. Hence we have
	\begin{align*}
		&   \lim_{n\rightarrow\infty} \E\int_0^T\int_Z|\xi^n(s,z)-\xi(s,z)|_E\,\nu(\d z)\d s=0,\\
		&   \lim_{n\rightarrow\infty} \E\int_0^T\int_Z|\xi^n(s,z)-\xi(s,z)|_E\,N(\d s,\d z)=0.
	\end{align*}
	By \eqref{sec-2-eq-10}, we may find a subsequence $\xi^{n_k}$ such that for every $t\in[0,T]$,
	\begin{align*}
   \int_0^t\int_Z\xi(s,\cdot,z)\,\tilde{N}(\d s,\d z)& =\lim_{k\rightarrow\infty}\int_0^t\int_Z\xi^{n_k}(s,\cdot,z)\,\tilde{N}(\d s,\d z)\\
                                            &= \lim_{k\rightarrow\infty}\Big[\int_0^t\int_Z\xi^{n_k}(s,\cdot,z)\,N(\d s,\d z)-\int_0^t\int_Z\xi^{n_k}(s,\cdot,z)\,\nu(dz)ds\Big]\\
&= \int_0^t\int_Z\xi(s,\cdot,z)\,N(\d s,\d z)-\int_0^t\int_Z\xi(s,\cdot,z)\,\nu(dz)ds, \ \ \mathbb{P}\text{-a.s.}
	\end{align*}
	\end{proof}
	
	\begin{rem}
	Proposition \ref{A.prop-3} is still valid under the weaker constrain on $E$ and $\xi$. Namely, we only assume that $E$ is a Banach space and $\xi \in \mathcal{M}^1_T(\mathcal{P}\otimes\mathcal{Z},\d t\times\mathbb{P}\times\nu;E)$. Then $E$ is of martingale type $1$, and we can still define the stochastic integral $\int_0^t\int_Z\xi(s,\cdot,z)\tilde{N}(ds,dz)$ which is an $L^1$-integrable martingale and satisfies
	\begin{align*}
	    \E\Big{|}\int_0^t\int_Z\xi(s,\cdot,z)\,\tilde{N}(\d s,\d z)\Big{|}_E\leq C\E \int_0^t\int_Z|\xi(s,\cdot,z)|_E\,\nu(\d z)\d s.
	\end{align*}
	In this case, the stochastic integrable coincides with the Bochner integral almost everywhere, that is \eqref{A.prop-3-1} holds.
	It is worth noting that \eqref{A.prop-3-1} is only satisfied when the function $\xi$ is predictable (specifically $\mathcal{P}\otimes\mathcal{Z}$-measurable).
	\end{rem}
%---------------------Appendix B------------------

\section{It\^o's  formula}\label{app-Ito}

Let $E$ be a martingale of type $r$ ($1<r\leq 2$) Banach space.
In this section, we will study  It\^{o}'s formula for the processes of the following type:
\begin{align}\label{Ito-process-B}
X_t=X_0+\int_0^t a(s)\,\d s+\int_0^tg(s)\,\d W_s+\int_0^t\int_Z \xi(s,z)\,\tilde{N}(\d s,\d z)+\int_0^t\int_Z \eta(s,z)\,N(\d s,\d z),\ t\geq0.
\end{align}
Here $a :\R_+ \times \Omega \to E $ is an $E$-valued progressively measurable process \dela{on the space $(\R_+\times\Omega,\mathcal{B}(\R_+)\otimes\mathcal{F})$} such that for all $t\geq0$, $\int_0^t\|a(s,\omega)\|ds<\infty$, $\mathbb{P}$-a.s., $N$ is a Poisson random measure associated with a Poisson point process $\pi$,
 $\xi\in\mathcal{M}^p_{loc}(\hat{\mathcal{P}},\nu;E)$, $\eta\in\mathcal{M}_{loc}(\hat{\mathcal{P}},N;E)$, and $g=(g(t))_{t\in[0,T]}$ is a process in $M([0,T];\gamma(H;E))$. Assume that $|\xi(t,\omega,z)|_E|\eta(t,\omega,z)|_E=0$ for all $(t,\omega,z)\in\R_+\times\Omega\times Z$. Without loss of generality, we may assume that the process $X$ is right-continuous with left-limits.
 %---------
 %---------
 In order to make the paper self-contained, we formulate the following auxiliary lemma.%--------------------------
\begin{lem}\label{lem-holder}{\bf{(see \cite{[Ana]})}}
  Let $E$ and $F$ be separable Banach spaces and $\phi:E\rightarrow F$ be a Fr\'{e}chet differentiable function whose  Fr\'{e}chet  derivative $\phi^\prime: E\rightarrow L(E;F)$ is locally $\alpha$-H\"{o}lder continuous.  In other words, for all $r>0$, there exists $H=H(r)<\infty$ such that
  \begin{align*}
  \sup_{|x|_E,|y|_E\leq r,x\neq y}\frac{\|\phi'(x)-\phi'(y)\|_{L(E;F)}}{|x-y|^{\alpha}_E}\leq H(r).
  \end{align*}
  Then we have that  for every $x,y\in E$,
  \begin{align}
         \phi(y)=\phi(x)+\phi^\prime(x)(y-x)+R_{\phi}(x,y),
  \end{align}
  where $ R_{\phi}(x,y)=\int_0^1(\phi^\prime(x+\theta (y-x))(y-x)-\phi^\prime(x)(y-x)) d\theta $.
   Moreover,  for every $r>0$, there exists $C=C(r)>0$ such that
   \begin{equation*}|R_{\phi}(x,y)|_F\leq C|y-x|_E^{\alpha+1} \mbox{ for all } |x|_E,|y|_E\leq r.
   \end{equation*}
\end{lem}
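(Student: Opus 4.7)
\medskip

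\noindent\textbf{Proposal.} The plan is to derive the identity for $R_\phi$ from the fundamental theorem of calculus for Fr\'echet differentiable functions, and then obtain the quantitative bound by combining convexity of the closed ball $\{|\cdot|_E\leq r\}$ with the local H\"older property of $\phi'$.

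First I would consider the auxiliary scalar-to-Banach function $\Phi:[0,1]\to F$ defined by $\Phi(\theta):=\phi(x+\theta(y-x))$. Since $\phi$ is Fr\'echet differentiable on $E$ and the map $\theta\mapsto x+\theta(y-x)$ is affine, the chain rule gives that $\Phi$ is Fr\'echet differentiable on $[0,1]$ with $\Phi'(\theta)=\phi'(x+\theta(y-x))(y-x)$. The H\"older continuity of $\phi'$ (in particular its continuity) ensures that $\Phi'$ is continuous on $[0,1]$, so $\Phi'$ is Bochner integrable and the fundamental theorem of calculus in Banach spaces yields
\[
\phi(y)-\phi(x)=\Phi(1)-\Phi(0)=\int_0^1\phi'(x+\theta(y-x))(y-x)\,d\theta.
\]
Adding and subtracting $\phi'(x)(y-x)=\int_0^1\phi'(x)(y-x)\,d\theta$ under the integral gives the desired decomposition $\phi(y)=\phi(x)+\phi'(x)(y-x)+R_\phi(x,y)$ with $R_\phi$ as in the statement.

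For the estimate, fix $r>0$ and assume $|x|_E,|y|_E\le r$. By convexity of the ball of radius $r$, $|x+\theta(y-x)|_E\le r$ for every $\theta\in[0,1]$. Hence by the local H\"older hypothesis with constant $H(r)$,
\[
\|\phi'(x+\theta(y-x))-\phi'(x)\|_{L(E;F)}\le H(r)\,|\theta(y-x)|_E^{\alpha}=H(r)\,\theta^{\alpha}|y-x|_E^{\alpha}.
\]
Plugging this into the definition of $R_\phi$, taking the $F$-norm inside the Bochner integral, and integrating in $\theta$ yields
\[
|R_\phi(x,y)|_F\le\int_0^1 H(r)\,\theta^{\alpha}|y-x|_E^{\alpha}\cdot|y-x|_E\,d\theta=\frac{H(r)}{\alpha+1}\,|y-x|_E^{\alpha+1},
\]
so the claim holds with $C(r):=H(r)/(\alpha+1)$.

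There is really no hard step here: the only subtleties to check are (i) that continuity of $\phi'$ is enough to Bochner-integrate $\Phi'$ on $[0,1]$, and (ii) that $x+\theta(y-x)$ stays in the ball of radius $r$ so that the local H\"older constant $H(r)$ applies uniformly in $\theta$. Both are immediate, so this lemma is essentially a clean packaging of the mean-value identity together with the H\"older bound, and it will be invoked in the It\^o-formula arguments of Appendix~B where it is applied to $\phi=\psi_p(\cdot)=|\cdot|_E^{p}$ on martingale type $r$ Banach spaces, for which the Pisier renorming (cf.\ \eqref{Ap-rem-eq-2}) guarantees that $\psi_p'$ is locally $(r-1)$-H\"older continuous.
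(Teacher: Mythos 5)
Your proof is correct and is the standard argument (mean-value identity via the fundamental theorem of calculus for the Bochner integral, followed by the H\"older bound on the line segment, which stays in the ball of radius $r$ by convexity). The paper itself gives no proof of this lemma --- it simply cites Anastassiou--Dragomir \cite{[Ana]} --- and your argument is exactly the one that reference (and the later application to $\psi_p$ via \eqref{Ap-rem-eq-2}) relies on, so there is nothing to add.
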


\begin{thm}\label{theo-Ito-1}{\bf{(It\^o's  formula 1)}}  Assume that $E$ is a martingale type $2$ Banach space. Let $X$ be a process given by \eqref{Ito-process-B}. Let $\phi:E\rightarrow E$ be twice Fr\'{e}chet differentiable and its first and second derivative are all continuous. Then for every $t>0$, we have $\mathbb{P}$-a.s.
\begin{align}
\begin{split}\label{Ito-formula-1}
       \phi(X_t)&=\phi(X_0)+\int_0^t \phi^\prime(X_{s}) (a(s))\,\d s+\int_0^t\phi^\prime(X_s)(g(s))\,\d W_s+\frac12\int_0^t \text{tr }\phi^{\prime\prime}(X_s)(g(s),g(s))\,\d s\\
       &\hspace{1cm}+  \int_0^t\int_Z \Big{[}\phi( X_{s-}+\eta(s,z))-\phi(X_{s-})\Big{]}\, N(\d s,\d z)\\
       &\hspace{1cm}+\int_0^t\int_Z \Big{[}\phi( X_{s-}+\xi(s,z))-\phi(X_{s-})\Big{]} \,\tilde{N}(\d s,\d z)\\
       &\hspace{1cm}+\int_0^t\int_Z \Big{[}\phi( X_{s-}+\xi(s,z))-\phi(X_{s-})-\phi^\prime(X_{s-})(\xi(s,z))\Big{]}\,\nu(\d z)\d s.
      \end{split}
\end{align}
\end{thm}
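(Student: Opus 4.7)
My plan is to reduce to a setting with only finitely many jumps on $[0,T]$, apply the continuous It\^o formula between consecutive jumps, and handle the jumps by a direct telescoping finite difference. Since the point process $\pi$ is $\sigma$-finite, I would choose an exhausting sequence $D_n\uparrow Z$ with $\E N(T,D_n)<\infty$, set $\xi^n:=\xi\,1_{D_n}$ and $\eta^n:=\eta\,1_{D_n}$, and replace $\xi,\eta$ in \eqref{Ito-process-B} by $\xi^n,\eta^n$ to obtain approximating processes $X^n$. Then $X^n$ has almost surely only finitely many jumps on $[0,T]$, whose jump times I would enumerate $0=\tau_0<\tau_1<\tau_2<\cdots\leq T$; on each stochastic interval $[\tau_{m-1},\tau_m)$ the process $X^n$ is continuous, with drift $a(s)-\int_{D_n}\xi^n(s,z)\,\nu(\d z)$ and diffusion $g(s)$.

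I would then decompose
\[
\phi(X^n_t)-\phi(X^n_0)=\sum_m\bigl[\phi(X^n_{t\wedge\tau_m})-\phi(X^n_{t\wedge\tau_m-})\bigr]+\sum_m\bigl[\phi(X^n_{t\wedge\tau_m-})-\phi(X^n_{t\wedge\tau_{m-1}})\bigr].
\]
For the first (jump) sum, using $\Delta X^n_{\tau_m}=\xi^n(\tau_m,\pi(\tau_m))+\eta^n(\tau_m,\pi(\tau_m))$ together with the disjointness $|\xi|_E|\eta|_E=0$, one obtains the two Bochner integrals $\int_0^t\!\int_{D_n}[\phi(X^n_{s-}+\xi^n)-\phi(X^n_{s-})]\,N(\d s,\d z)$ and $\int_0^t\!\int_{D_n}[\phi(X^n_{s-}+\eta^n)-\phi(X^n_{s-})]\,N(\d s,\d z)$. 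For the second (continuous) sum, I would invoke Theorem~3.1 of \cite{[Neerven+Zhu]}---the continuous It\^o formula for $E$-valued semimartingales in a martingale-type-$2$ Banach space---on each interval $[\tau_{m-1},\tau_m)$; the continuity of $\phi''$ yields the locally Lipschitz $\phi'$ required there. Summing and telescoping produces $\int_0^t\phi'(X^n_s)(a(s))\,\d s$, the Brownian It\^o integral, the trace correction, and an additional $-\int_0^t\!\int_{D_n}\phi'(X^n_s)(\xi^n(s,z))\,\nu(\d z)\,\d s$ coming from the absolutely continuous drift $-\int_{D_n}\xi^n(s,z)\,\nu(\d z)$. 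Combining this with the $\xi^n$-jump Bochner integral through $N=\tilde{N}+\nu\otimes\d s$ and collecting terms gives \eqref{Ito-formula-1} for $X^n$.

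The last step is to send $n\to\infty$. The drift and Brownian integrals are $n$-independent, the $\eta^n$-Bochner integral against $N$ converges by dominated convergence, and convergence of the three $\xi^n$-contributions follows from inequality \eqref{sec-2-SI} applied to the compensated stochastic integral together with Proposition \ref{A.prop-3}. The main obstacle I anticipate is passing to the limit in the second-order compensator involving $\phi(X^n_{s-}+\xi^n)-\phi(X^n_{s-})-\phi'(X^n_{s-})(\xi^n)$: controlling this requires the local Taylor bound $|\phi(x+y)-\phi(x)-\phi'(x)(y)|_E\leq C_R|y|_E^2$ on $\{|x|_E\leq R\}$ (a consequence of Lemma \ref{lem-holder} applied to $\phi'$), combined with a localization through the stopping times $\sigma_R:=\inf\{t:|X_t|_E\vee|X^n_t|_E\geq R\}\wedge T$ to obtain uniform-in-$n$ $\nu\otimes\d s$-integrability of the integrand, before finally letting $R\to\infty$.
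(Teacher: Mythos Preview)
Your strategy coincides with the paper's: localize to finitely many jumps via $D_n\uparrow Z$, telescope $\phi(X^n_t)-\phi(X^n_0)$ into a jump sum and a between-jumps continuous sum, apply a continuous It\^o formula on each interval $[\tau_{m-1},\tau_m)$, recombine using $N=\tilde N+\nu\otimes\d s$, then pass to the limit in $n$.

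Two points to clean up. First, the paper invokes the $C^2$ It\^o formula from \cite{[Neid]} on the continuous pieces, which delivers the trace term $\frac12\int\text{tr }\phi''(X_s)(g,g)\,\d s$ directly; Theorem 3.1 of \cite{[Neerven+Zhu]} is designed for $\phi'$ merely Lipschitz and produces a limit-of-Riemann-sums remainder instead, so if you cite it you must still argue that under $C^2$ this remainder collapses to the trace integral. Second, your claim that ``the drift and Brownian integrals are $n$-independent'' is false: they contain $\phi'(X^n_s)$ and $\phi''(X^n_s)$ and therefore depend on $n$. The paper handles this (and all the other terms) by first showing $X^n\to X$ uniformly on $[0,T]$ a.s.\ along a subsequence, then using continuity of $\phi,\phi',\phi''$ together with the a-priori boundedness $\sup_s|X_s|_E\le r$, which it assumes at the outset by a standard localization rather than deferring it to the end as you do. Either localization works, but assuming boundedness up front makes all the dominated-convergence arguments immediate and avoids the extra stopping-time layer $\sigma_R$.
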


\begin{thm}\label{theo-Ito-3}{\bf{(It\^o's  formula 2)}}
Assume that $E$ is a martingale type $r$ Banach space, $r\in(1,2]$.
Let $\phi:E\rightarrow E$ be a function of class $C^1$ such that the first derivative $\phi^\prime:E\rightarrow L(E;E)$ is locally $(r-1)$-H\"{o}lder continuous. Let $X$ be a non-Gaussian L\'{e}vy process given by
\begin{align}\label{Ito-process-2}
X_t=X_0+\int_0^t a(s)\,\d s+\int_0^t\int_Z \xi(s,z)\,\tilde{N}(\d s,\d z)+\int_0^t\int_Z \eta(s,z)\,N(\d s,\d z),\ t\geq0.
\end{align}
Then for every $t>0$, we have $\mathbb{P}$-a.s.
\begin{align}
\begin{split}\label{Ito-formula-3}
       \phi(X_t)&=\phi(X_0)+\int_0^t \phi^\prime(X_{s}) (a(s))\,\d s+
       \int_0^t\int_Z \Big{[}\phi( X_{s-}+\eta(s,z))-\phi(X_{s-})\Big{]} \,N(\d s,\d z)\\
       &\hspace{1cm}+\int_0^t\int_Z \Big{[}\phi( X_{s-}+\xi(s,z))-\phi(X_{s-})\Big{]}\, \tilde{N}(\d s,\d z)\\
       &\hspace{1cm}+\int_0^t\int_Z \Big{[}\phi( X_{s-}+\xi(s,z))-\phi(X_{s-})-\phi^\prime(X_{s-})(\xi(s,z))\Big{]}\,\nu(\d z)\d s.
      \end{split}
\end{align}
\end{thm}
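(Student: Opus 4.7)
The plan is to mimic the strategy already used in the proof of Theorem \ref{theo-Ito-2} (and implicitly of Theorem \ref{theo-Ito-1}) but without the Brownian ingredient, thereby avoiding the Gaussian remainder term that required the second Fr\'echet derivative. First I would exploit the $\sigma$-finiteness of $\pi$ to pick an exhausting sequence $\{D_n\}_{n\in\N}\subset\mathcal{Z}$ with $D_n\subset D_{n+1}$, $\cup_n D_n=Z$, and $\E N(T,D_n)<\infty$. Set $\xi^n:=\xi\,1_{D_n}$, $\eta^n:=\eta\,1_{D_n}$, and let $X^n$ be the corresponding truncated process. Inside $[0,T]\times D_n$ there are only finitely many jumps of $\pi$; order them as $0<\tau_1<\tau_2<\cdots\le T$ (with $\tau_0:=0$). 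Since $|\xi||\eta|=0$, each jump of $X^n$ at $\tau_m$ is either from $\xi^n$ or from $\eta^n$ (never both), so the pathwise jump sums split cleanly into the two desired integrals.

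The core of the argument is the telescoping decomposition
\[
\phi(X^n_t)-\phi(X_0)=\sum_{m}\!\big[\phi(X^n_{t\wedge\tau_m})-\phi(X^n_{t\wedge\tau_m-})\big]+\sum_{m}\!\big[\phi(X^n_{t\wedge\tau_m-})-\phi(X^n_{t\wedge\tau_{m-1}})\big]=:I_1+I_2.
\]
For $I_1$: by definition of the Poisson integrals, summing the jumps exactly reproduces
\[
I_1=\int_0^t\!\!\int_Z\!\big[\phi(X^n_{s-}+\xi^n(s,z))-\phi(X^n_{s-})\big]\,N(\d s,\d z)+\int_0^t\!\!\int_Z\!\big[\phi(X^n_{s-}+\eta^n(s,z))-\phi(X^n_{s-})\big]\,N(\d s,\d z),
\]
and after subtracting/adding the compensator $\int_0^t\!\int_Z[\phi(X^n_{s-}+\xi^n)-\phi(X^n_{s-})]\,\nu(\d z)\d s$ the first summand becomes a $\tilde N$-integral plus the $\nu$-integrated remainder that appears in \eqref{Ito-formula-3} (after further subtracting the linear term $\phi'(X^n_{s-})(\xi^n(s,z))$, which is added back in $I_2$). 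For $I_2$: between any two consecutive jump times $\tau_{m-1}$ and $\tau_m$ the path of $X^n$ is absolutely continuous with Bochner derivative $a(s)-\int_Z\xi^n(s,z)\,\nu(\d z)$, because the martingale $\int_0^\cdot\!\int_Z\xi^n\,\tilde N$ contributes only its compensator outside jump times and the $N$-integral of $\eta^n$ contributes only at jump times. Since $\phi$ is $C^1$, the classical fundamental theorem of calculus for Fr\'echet differentiable maps gives
\[
\phi(X^n_{t\wedge\tau_m-})-\phi(X^n_{t\wedge\tau_{m-1}})=\int_{t\wedge\tau_{m-1}}^{t\wedge\tau_m}\phi'(X^n_s)\Big(a(s)-\int_Z\xi^n(s,z)\,\nu(\d z)\Big)\d s,
\]
and summing over $m$ yields $I_2=\int_0^t\phi'(X^n_s)(a(s))\,\d s-\int_0^t\!\int_Z\phi'(X^n_{s-})(\xi^n(s,z))\,\nu(\d z)\d s$. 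Combining $I_1$ and $I_2$ gives \eqref{Ito-formula-3} for $X^n$ in place of $X$.

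The main obstacle, and the last step, is the passage $n\to\infty$. Here the martingale type $r$ hypothesis, the local $(r-1)$-H\"older continuity of $\phi'$, and Lemma \ref{lem-holder} are essential: they deliver the pointwise bound
\[
\big|\phi(x+h)-\phi(x)-\phi'(x)(h)\big|_E\le C(R)\,|h|_E^{r}\qquad\text{whenever }|x|_E,|x+h|_E\le R,
\]
which is precisely the ingredient matching the integrability $\xi\in\mathcal{M}_T^r(\mathcal{P}\otimes\mathcal{Z},\d t\times\mathbb{P}\times\nu;E)$. To cope with unboundedness I would localize by the stopping time $\sigma_R:=\inf\{t\ge 0:|X_t|_E\vee|X^n_t|_E>R\}\wedge T$, pass to the limit in $n$ on $[0,\sigma_R]$ using the BDG estimate \eqref{sec-2-eq-10} for the $\tilde N$-integrals and dominated convergence for the $N$- and $\d s$-integrals (noting that $|\xi^n|_E\le|\xi|_E$ and $|\eta^n|_E\le|\eta|_E$ with pointwise convergence as $n\to\infty$), and finally let $R\to\infty$ using the fact that $X$ has c\`adl\`ag paths so $\sigma_R\nearrow T$ a.s. This yields \eqref{Ito-formula-3} $\mathbb{P}$-a.s.~for every fixed $t\in[0,T]$.
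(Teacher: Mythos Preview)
Your proposal is correct and follows essentially the same route as the paper's own proof: the same exhausting sequence $\{D_n\}$, the same telescoping $I_1+I_2$, the same jump accounting for $I_1$, and the same passage to the limit using the $|h|_E^r$ remainder bound from Lemma~\ref{lem-holder}. The only cosmetic difference is that the paper localizes (assumes $X$ bounded) at the outset and then approximates, while you approximate first and localize with $\sigma_R$ at the end; and for $I_2$ you invoke the ordinary fundamental theorem of calculus for $C^1$ maps on the absolutely continuous inter-jump path, which is exactly what the paper's terse ``by applying Lemma~\ref{lem-holder}, routine computations'' amounts to in the non-Gaussian setting.
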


%-------------------------------

\begin{proof}[Proof of Theorem \ref{theo-Ito-1}]

Without loss of generality, we may assume that the process $X$ is bounded, namely, there exists $r>0$ such that
\begin{align}\label{eq-135}
         \sup_{0\leq s\leq t}|X_s|_E\leq r.
\end{align}
 Then we can relax the boundedness assumption \eqref{eq-135} by the usual localization argument.
        Since the Poisson point process $\pi$ is $\sigma$-finite, there exists a sequence of sets $\{D_n\}_{n\in\mathbb{N}}$ such that $D_n\subset D_{n+1}$, $\cup_{n\in\mathbb{N}}D_n=Z$, and $\E N(t,D_n)<\infty$ for every $0<t<\infty$ and $n\in\mathbb{N}$. Define a sequence $\{\xi^n\}_{n\in\N}$ of functions by
       \begin{align*}
           \xi^n(s,\omega,z):=\xi(s,\omega,z)1_{D_n}(z),\ (s,\omega,z)\in\mathbb{R}_+\times\Omega\times Z,\  n\in\N.
       \end{align*}
 Since $|\xi^n|_E\leq |\xi|_E$ and by the assumption, $\xi\in\mathcal{M}^r_T(\mathcal{P}\otimes\mathcal{Z},\d t\times\mathbb{P}\times\nu;E)$, we infer that $\xi^n\in\mathcal{M}^r_T(\mathcal{P}\otimes\mathcal{Z},\d t\times\mathbb{P}\times\nu;E)$. By the definition of stochastic integrals, we have
 \begin{align*}
         \int_0^t\int_Z\xi^n(s,z)\,\tilde{N}(\d s,\d z)&= \int_0^t\int_Z1_{D_n}\xi(s,z)\,\tilde{N}(\d s,\d z)=\int_0^t\int_{D_n}\xi(s,z)\,\tilde{N}(\d s,\d z).
 \end{align*}
 Now applying Proposition \ref{A.prop-3} yields that
        \begin{align}\label{eq661}
        \int_0^t\int_Z\xi^n(s,z)\,\tilde{N}(\d s,\d z)=
        \sum_{s\in(0,t]\cap
     \mathcal{D}(\pi)}\xi(s,\pi(s))1_{D_n}(\pi(s))-\int_0^t\int_{D_n}\xi(s,z)\,\nu(\d z)\d s.
        \end{align}
        Similarly, we define a sequence $\{g^n\}_{n\in\N}$ of functions by
               \begin{align*}
           \eta^n(s,\omega,z)=\eta(s,\omega,z)1_{D_n}(z),\ \ \ (s,\omega,z)\in\mathbb{R}_+\times\Omega\times Z,\ n\in\N.
       \end{align*}
   It follows that
\begin{align}\label{eq660}
        \int_0^t\int_Z\eta^n(s,z)N(\d s,\d z)=\sum_{s\in(0,t]\cap
     \mathcal{D}(\pi)}\eta(s,\pi(s))1_{D_n}(\pi(s)).
             \end{align}
     Since $\E N(t,D_n)<\infty$, $t\geq0$,
        we see that for almost every $\omega\in\Omega$, the set $\{s\leq t:\pi(s,\omega)\in D_n\cap\mathcal{D}(\pi)\}$ contains only finitely many points in each time interval $(0,t]$, for $t>0$.
 Hence we may denote these points according to their magnitude by $0=\tau_0(\omega)<\tau_1(\omega)<\tau_2(\omega)<\cdots<\tau_m(\omega)<\cdots$.  In other words, we put
 \begin{align*}
 \tau_0&=0;\\
    \tau_m&=\inf\{s\in(0,t]\cap\mathcal{D}(\pi): \pi(s)\in D_n; s>\tau_{m-1}\},\ m\geq1.
 \end{align*}
         The random times $\tau_1,\tau_2,\dots$ form a random configuration of points in $(0,t]$ with $\pi(\tau_i)\in D_n$ and for each $m$, the random time $\tau_m$ is a stopping time. Indeed, for every $u>0$, we find that
         \begin{align*}
                \{\tau_m\leq u\}=\{N(u,D_n)\geq m\}\in\mathcal{F}_u.
         \end{align*}
             Let us define a sequence $\{X^n\}_{n\in\N}$ of process $X^n:=(X^n_t)_{t\geq0}$ by
             \begin{align*}
                  X^n_t=X_0^t+\int_0^ta(s)\d s+\int_0^tg(s)\d W_s+\int_0^t\int_Z \xi^n(s,z)\tilde{N}(\d s,\d z)+\int_0^t\int_Z \eta^n(s,z)N(\d s,\d z),\ t\geq0, \ n\in\mathbb{N}.
             \end{align*}
It follows from \eqref{eq661} and \eqref{eq660} that for every $n\in\N$ and all $t\geq0$, $\mathbb{P}$-a.s.
                \begin{align}
                \begin{split}\label{interlacing-pr}
                  X^n_t
                  %------------
                  \dela{
                  &=X_0+\int_0^ta(s)\d s+\sum_{s\in(0,t]\cap
     \mathcal{D}(\pi)}\xi(s,\pi(s))1_{D_n}(\pi(s))-\int_0^t\int_{D_n}\xi(s,z)\nu(\d z)\d s\\
     &\hspace{1cm}+ \sum_{s\in(0,t]\cap
     \mathcal{D}(\pi)}\eta(s,\pi(s))1_{D_n}(\pi(s))\\}
     %------------
     &= X_0+\int_0^t a(s)\d s+\int_0^tg(s)\d W_s-\int_0^t\int_Z\xi^n(s,z)\nu(\d z)\d s+\sum_m\xi^n(\tau_m,\pi(\tau_m),\cdot)1_{\{\tau_m\leq t\}}\\
     &\hspace{1cm}+\sum_m \eta^n(\tau_m,\pi(\tau_m),\cdot)1_{\{\tau_m\leq t\}}.
     \end{split}
             \end{align}
    Note that
    \begin{align*}
         \phi(X_t^n)-\phi(X_0)&=\sum_m\Big{[} \phi(X^n_{t\wedge\tau_{m}})-\phi(X^n_{t\wedge\tau_{m-1}}) \Big{]}\\
         &=\sum_m\Big{[} \phi(X^n_{t\wedge\tau_{m}})-\phi(X^n_{t\wedge\tau_{m}-}) \Big{]}+\sum_m\Big{[} \phi(X^n_{t\wedge\tau_{m}-})-\phi(X^n_{t\wedge\tau_{m-1}}) \Big{]}\\
         &=: I_1+I_2.
    \end{align*}
 Here $X^n_{t\wedge\tau_{m}}=(X^n)_t^{\tau_m}$, $t\geq0$ is the process $X^n$ stopped at time $\tau_m$, and $X^n_{t\wedge\tau_m-}=(\bar{X}^n)_{t}^{\tau_m}$, $t\geq0$ is the process $X^n$ stopped strictly before time $\tau_m$.  Namely,
               \begin{eqnarray*}
X^n_{t\wedge\tau_{m}}(\omega)=(X^n)_t^{\tau_m}(\omega)=\left\{
                    \begin{array}{cc}
                             X^n_t(\omega) & if\ t\leq \tau_m(\omega),\\
                       X^n_{\tau_m(\omega)}(\omega) & if\ t\geq \tau_m(\omega).
                   \end{array}
                           \right.
                        \end{eqnarray*}
                        and
               \begin{eqnarray*}
                    X^n_{t\wedge\tau_m-}(\omega)=   (\bar{X}^n)_t^{\tau_m}(\omega)=\left\{
                    \begin{array}{cc}
                             X^n_t(\omega) & if\ t< \tau_m(\omega)\\
                       X^n_{\tau_m(\omega)-}(\omega) & if\ t\geq \tau_m(\omega).
                   \end{array}
                           \right.
                        \end{eqnarray*}
 Note that the jumps of $X^n$ occur only at times $\{\tau_m\}$. So $X^n_{t\wedge\tau_m}\neq X^n_{t\wedge\tau_m-}$ if and only if $$\xi^n(\tau_m,\pi(\tau_m),\cdot)1_{\{\tau_m\leq t\}}+ \eta^n(\tau_m,\pi(\tau_m),\cdot)1_{\{\tau_m\leq t\}}\neq 0.$$  Since by assumption $|f|_E\cdot|g|_E=0$, we infer that $X^n_{t\wedge\tau_m}\neq X^n_{t\wedge\tau_m-}$ if and only if
             \begin{align*}
                      \xi^n(\tau_m,\pi(\tau_m),\cdot)1_{\{\tau_m\leq t\}}\neq 0\ \text{and } \eta^n(\tau_m,\pi(\tau_m),\cdot)1_{\{\tau_m\leq t\}}=0
             \end{align*}
             or
                          \begin{align*}
                      \xi^n(\tau_m,\pi(\tau_m),\cdot)1_{\{\tau_m\leq t\}}= 0\ \text{and } \eta^n(\tau_m,\pi(\tau_m),\cdot)1_{\{\tau_m\leq t\}}\neq 0
             \end{align*}
             Hence
             \begin{align*}
                   X^n_{t\wedge\tau_m}&= X^n_{t\wedge\tau_m-}+\xi^n(\tau_m,\pi(\tau_m))1_{\{\tau_m\leq t\}}+ \eta^n(\tau_m,\pi(\tau_m))1_{\{\tau_m\leq t\}}\\
                 &  =
                   \left\{
                    \begin{array}{cc}
                             X^n_{t\wedge\tau_m-}+ \xi^n(\tau_m,\pi(\tau_m))1_{\{\tau_m\leq t\}} & if\  \xi^n(\tau_m,\pi(\tau_m))\neq 0,\ \eta^n(\tau_m,\pi(\tau_m))=0,\\
                       X^n_{t\wedge\tau_m-}+ \eta^n(\tau_m,\pi(\tau_m))1_{\{\tau_m\leq t\}} & if\  \xi^n(\tau_m,\pi(\tau_m))= 0,\ \eta^n(\tau_m,\pi(\tau_m))\neq0.
                   \end{array}
                   \right.
             \end{align*}
It follows that
             \begin{align*}
                   I_1&=\sum_m\Big{[} \phi(X^n_{t\wedge\tau_{m}})-\phi(X^n_{t\wedge\tau_{m}-}) \Big{]}\\
                   & =\sum_m\Big{[} \phi(X^n_{t\wedge\tau_{m}})-\phi(X^n_{t\wedge\tau_{m}-}) \Big{]}1_{\{\|\xi(\tau_m,\pi(\tau_m))\|\neq0\}\cap \{\|\eta(\tau_m,\pi(\tau_m))\|=0\}}1_{\{\tau_m\leq t\}}\\
                   &+ \sum_m\Big{[} \phi(X^n_{t\wedge\tau_{m}})-\phi(X^n_{t\wedge\tau_{m}-}) \Big{]}1_{\{\|\xi(\tau_m,\pi(\tau_m))\|=0\}\cap\{\|\eta(\tau_m,\pi(\tau_m))\|\neq0\}}1_{\{\tau_m\leq t\}}\\
                                      & =\sum_m\Big{[} \phi(X^n_{t\wedge\tau_m-}+ \xi^n(\tau_m,\pi(\tau_m)))-\phi(X^n_{t\wedge\tau_{m}-}) \Big{]}1_{\{\|\xi(\tau_m,\pi(\tau_m))\|\neq0\}\cap \{\|\eta(\tau_m,\pi(\tau_m))\|=0\}}1_{\{\tau_m\leq t\}}\\
                   &+ \sum_m\Big{[} \phi(X^n_{t\wedge\tau_m-}+ \eta^n(\tau_m,\pi(\tau_m)))-\phi(X^n_{t\wedge\tau_{m}-}) \Big{]}1_{\{\|\xi(\tau_m,\pi(\tau_m))\|=0\}\cap\{\|\eta(\tau_m,\pi(\tau_m))\|\neq0\}}1_{\{\tau_m\leq t\}}\\
                   &=\int_0^t\int_Z\Big{[} \phi(X^n_{s-}+\xi^n(s,z,\omega)) -\phi(X^n_{s-})    \Big{]}N(ds,dz)\\
                   &+\int_0^t\int_Z\Big{[} \phi(X^n_{s-}+\eta^n(s,z,\omega)) -\phi(X^n_{s-})    \Big{]}N(ds,dz)\\
                    &=\int_0^t\int_Z\Big{[} \phi(X^n_{s-}+\xi^n(s,z,\omega)) -\phi(X^n_{s-})    \Big{]}\tilde{N}(ds,dz)\\
                   &+\int_0^t\int_Z\Big{[} \phi(X^n_{s-}+\eta^n(s,z,\omega)) -\phi(X^n_{s-})    \Big{]}N(ds,dz)\\
                   & +\int_0^t\int_Z\Big{[} \phi(X^n_{s-}+\xi^n(s,z,\omega)) -\phi(X^n_{s-})    \Big{]}\nu(dz)ds
             \end{align*}

    Notice that there are no jumps of $X$ in the random time interval $(t\wedge\tau_{m-1},t\wedge\tau_m)$, in other words $X$ contains only the continuous components. Set for $u\in[0,T]$,
		                 \begin{align}\label{stopped-I_2}
			Y^{n,m}_t
			  =X^n_{\tau_{m-1}}+\int_0^t1_{[\tau_{m-1},T]}\Big[a(s)-\int_{Z}\xi^n(s,z)\,\nu(\d z)\Big]\d s+\int_0^t
			 1_{[\tau_{m-1},T]} g(s)\,\d W_s.   	
		                 \end{align}
		                 Then we have $Y^{n,m}_{t}=X^n_{\tau_{m-1}}$ for $t\in[0,\tau_{m-1}]$, $Y_t^{n,m}=X_t^n$ for $t\in[\tau_{m-1},\tau_m)$, and $Y_{\tau_m}^{n,m}=Y_{\tau_{m}-}^{n,m}=X_{\tau_{m}-}$.		                 By applying the It\^{o} formula of Gaussian processes in \cite{[Neid]} to the process $Y^{n,m}_u$, we obtain
		                                 \begin{align*}
\phi(Y^{n,m}_{t\wedge\tau_{m}})=&\phi(X^n_{\tau_{m-1}})-\int_{\tau_{m-1}}^{t\wedge\tau_{m}}\phi^\prime(Y^{n,m}_s)(a(s))\d s-\int_{\tau_{m-1}}^{t\wedge\tau_{m}}\phi^\prime(Y^{n,m}_s)(\xi^n(s,z))\nu(\d z)\d s\\
    &+\int_{\tau_{m-1}}^{t\wedge\tau_{m}}\phi^\prime(Y^{n,m}_s)(g(s))\d W_s
    +\frac12\int_{\tau_{m-1}}^{t\wedge\tau_{m}} \text{tr }\phi^{\prime\prime}(Y^{n,m}_s)(g(s),g(s))\d s,\text{  for }t\in[\tau_{m-1},\tau_m]\;\;\mathbb{P}\text{-a.s.}
     \end{align*}
     Moreover, when $t<\tau_{m-1}$, we have $$\phi(Y^{n,m}_{t\wedge\tau_{m}})-\phi(X^n_{\tau_{m-1}})=0=\phi(X^n_{t\wedge\tau_{m}-})-\phi(X^n_{t\wedge\tau_{m-1}});$$  when $\tau_{m-1}\leq t<\tau_m$, $$\phi(Y^{n,m}_{t\wedge\tau_{m}})-\phi(X^n_{\tau_{m-1}})=\phi(Y^{n,m}_t)-\phi(X^n_{\tau_{m-1}})=\phi(X^n_t)-\phi(X^n_{\tau_{m-1}})=\phi(X^n_{t\wedge\tau_{m}-})-\phi(X^n_{t\wedge\tau_{m-1}});$$
     when $t\geq\tau_m$, we have
     $$\phi(Y^{n,m}_{t\wedge\tau_{m}})-\phi(X^n_{\tau_{m-1}})=\phi(Y_{\tau_m}^{n,m})-\phi(X^n_{\tau_{m-1}})=\phi(X^n_{\tau_m-})-\phi(X^n_{\tau_{m-1}})=\phi(X^n_{t\wedge\tau_{m}-})-\phi(X^n_{t\wedge\tau_{m-1}}).$$
     Hence,
		                      \begin{align*}
\phi(X^n_{t\wedge\tau_{m}-})-\phi(X^n_{t\wedge\tau_{m-1}})=&\int_{\tau_{m-1}}^{t\wedge\tau_{m}}\phi^\prime(X^n_s)(a(s))\d s-\int_{\tau_{m-1}}^{t\wedge\tau_{m}}\phi^\prime(X^n_s)(\xi^n(s,z))\nu(\d z)\d s\\
    &+\int_{\tau_{m-1}}^{t\wedge\tau_{m}}\phi^\prime(X^n_s)(g(s))\d W_s
    +\frac12\int_{\tau_{m-1}}^{t\wedge\tau_{m}} \text{tr }\phi^{\prime\prime}(X^n_s)(g(s),g(s))\d s.
     \end{align*}
     Therefore, we have
     \begin{align}
     \begin{split}\label{eq-132}
    I_2= \sum_m\Big{[} \phi(X^n_{t\wedge\tau_{m}-})-\phi(X^n_{t\wedge\tau_{m-1}}) \Big{]}=&\int_{0}^{t}\phi^\prime(X^n_s)(a(s))\,\d s-\int_0^t\phi^\prime(X^n_s)(\xi^n(s,z))\,\nu(\d z)\d s\\
    &+\int_{0}^{t}\phi^\prime(X^n_s)(g(s))\,\d W_s
    +\frac12\int_0^t \text{tr }\phi^{\prime\prime}(X^n_s)(g(s),g(s))\,\d s.
    \end{split}
     \end{align}

 %-------------------------------------
                         Combining $I_1$ and $I_2$ yields that
                  \begin{align}
                  \begin{split}\label{ito-formula-proof-eq1}
                   \phi(X_t^n)-\phi(X_0)
                   &=\int_0^t \phi^\prime(X^n_s)(a(s))\,ds+\int_{0}^{t}\phi^\prime(X^n_s)(g(s))\,\d W_s
    +\frac12\int_0^t \text{tr }\phi^{\prime\prime}(X^n_s)(g(s),g(s))\,\d s\\
    &+\int_0^t\int_Z\Big{[} \phi(X^n_{s-}+\xi^n(s,z,\omega)) -\phi(X^n_{s-})    \Big{]}\tilde{N}(ds,dz)\\
                   &+\int_0^t\int_Z\Big{[} \phi(X^n_{s-}+\eta^n(s,z,\omega)) -\phi(X^n_{s-})    \Big{]}N(ds,dz)\\
                   & +\int_0^t\int_Z\Big{[} \phi(X^n_{s-}+\xi^n(s,z,\omega)) -\phi(X^n_{s-})- \phi^\prime(X^n_{s-})(\xi^n(s,z))    \Big{]}\nu(dz)ds.
                   \end{split}
                  \end{align}
                 This shows that It\^{o} formula \eqref{Ito-formula-1} holds for the process $X^n$.
    Now let us consider the general case. On the basis of the inequality \eqref{Lr-1}  and the Lebesgue dominated convergence theorem, we infer
    \begin{align*}
      &  \lim_{n\rightarrow\infty} \E\left| \int_0^t\int_Z \xi^n(s,z)\tilde{N}(ds,dz)-\int_0^t\int_Z \xi(s,z)\tilde{N}(ds,dz)   \right|_E^2\\
        &\leq C\lim_{n\rightarrow\infty} \E\int_0^t\int_Z |\xi^n(s,z)-\xi(s,z)|_E^2\;\nu(dz)ds=0.
    \end{align*}
   This allows  us to find a subsequence such that $\int_0^t\int_Z \xi^n(s,z)\tilde{N}(ds,dz) $ uniformly converges to $\int_0^t\int_Z \xi(s,z)\tilde{N}(ds,dz) $ on any finite interval $[0,T]$ $\mathbb{P}$-a.s. Similarly, we can prove that $\int_0^t\int_Z \eta^n(s,\omega,z) N(ds,dz) $ uniformly converges to $\int_0^t\int_Z \eta(s,\omega,z) N(ds,dz) $ on $[0,T]$ $\mathbb{P}$-a.s. as well. Hence we infer that $X^n_s$ converges uniformly to $X_{s}$ as $n\rightarrow\infty$, $\mathbb{P}$-a.s. on $[0,T]$. Also, $X^n_{s-}$ converges uniformly to $X_{s-}$ $\mathbb{P}$-a.s. on $[0,T]$ as $n\rightarrow\infty$. Hence by the continuity of $\phi$, $\phi^\prime$, and $\phi^{\prime\prime}$, we infer that
   \begin{align}
   \begin{split}\label{convergence-eq}
       &\int_0^t\phi^\prime(X_s^n)(a(s))\,\d s\rightarrow \int_0^t\phi^\prime(X_s)(a(s))\,\d s,\hspace{1cm}\mathbb{P}\text{-a.s.},\\
       &\int_0^t\phi^\prime(X_s^n)(g(s))\,\d W_s\rightarrow \int_0^t\phi^\prime(X_s)(g(s))\,\d W_s\hspace{1cm}\text{in }L_2,\\
       &\int_0^t\int_Z\Big{[} \phi(X^n_{s-}+\xi^n(s,z,\omega)) -\phi(X^n_{s-})    \Big{]}\tilde{N}(ds,dz)\\
       &\longrightarrow \int_0^t\int_Z\Big{[} \phi(X_{s-}+\xi(s,z,\omega)) -\phi(X_{s-})    \Big{]}\tilde{N}(ds,dz)\hspace{1cm}\text{in }L_2,\\
      & \int_0^t\int_Z\Big{[} \phi(X^n_{s-}+\eta^n(s,z,\omega)) -\phi(X^n_{s-})    \Big{]}N(ds,dz)\\
      &\longrightarrow \int_0^t\int_Z\Big{[} \phi(X_{s-}+\eta(s,z,\omega)) -\phi(X_{s-})    \Big{]}N(ds,dz),\hspace{1cm}\mathbb{P}\text{-a.s.}\\
      &\int_0^t\int_Z\Big{[} \phi(X^n_{s-}+\xi^n(s,z,\omega)) -\phi(X^n_{s-})- \phi^\prime(X^n_{s-})(\xi^n(s,z))    \Big{]}\nu(dz)ds,
\\
&\longrightarrow \int_0^t\int_Z\Big{[} \phi(X_{s-}+\xi(s,z,\omega)) -\phi(X_{s-})- \phi^\prime(X_{s-})(\xi(s,z))    \Big{]}\nu(dz)ds,\hspace{1cm}\mathbb{P}\text{-a.s.}
\end{split}
   \end{align}
   and
   \begin{align*}
   \frac12\int_0^t \text{tr }\phi^{\prime\prime}(X^n_s)(g(s),g(s))\,\d s\rightarrow \frac12\int_0^t \text{tr }\phi^{\prime\prime}(X_s)(g(s),g(s))\,\d s\hspace{1cm}\mathbb{P}\text{-a.s.}
   \end{align*}
  Hence the It\^{o} formula follows by passing the limit in the equality \eqref{ito-formula-proof-eq1} along a subsequence.
   \end{proof}

%-----------------------------end of Proof-------
\begin{proof}[Proof of Theorem \ref{theo-Ito-3}]
The proofs are almost identical to the one given in the proof of Theorem \ref{theo-Ito-1}, the major change being the
proof of the term $I_2$. Since $\phi^\prime$ is $(r-1)$-H\"{o}lder continuous, by applying Lemma \ref{lem-holder},  routine computations give rise to
\begin{align*}
I_2=\sum_m\Big{[} \phi(X^n_{t\wedge\tau_{m}-})-\psi(X^n_{t\wedge\tau_{m-1}}) \Big{]}
		  		=\int_{0}^{t}\phi^\prime(X_s)(a_s)\,\d s-\int_0^t\int_Z\xi^n(s,z)\,\nu(\d z)\d s.
\end{align*}\end{proof}

\section*{Acknowledgment}
 The authors would like to thank the referees for their very careful reading of the manuscript and especially for their very valuable suggestions and constructive comments.

%-------------------------

	% ------------------------------------------------------------------------

	%\subsection*{Acknowledgment}
	%Many thanks to our \TeX-pert for developing this class file.
	% ------------------------------------------------------------------------
   % \end{document}
	% ------------------------------------------------------------------------

\end{document}